\newcommand{\rom}[1]{\uppercase\expandafter{\romannumeral #1\relax}}
\newcommand{\beas}{\begin{eqnarray*}}
\newcommand{\enas}{\end{eqnarray*}}
\newcommand{\bea}{\begin{eqnarray}}
\newcommand{\ena}{\end{eqnarray}}
\newcommand{\bms}{\begin{multline*}}
\newcommand{\ems}{\end{multline*}}
\newcommand{\bels}{\begin{align*}}
\newcommand{\enls}{\end{align*}}
\newcommand{\bel}{\begin{align}}
\newcommand{\enl}{\end{align}}
\newcommand{\ignore}[1]{}
\newtheorem{theorem}{Theorem}[section]
\newtheorem{corollary}{Corollary}[section]
\newtheorem{lemma}{Lemma}[section]
\newtheorem{definition}{Definition}[section]
\newtheorem{assumption}{Assumption}[section]
\def\blfootnote{\xdef\@thefnmark{}\@footnotetext}
\newcommand{\expect}[1]{\mathbb{E}{\l[#1\r]}}
\newcommand{\wt}[1]{\widetilde{#1}}
\newcommand{\mc}[1]{\mathcal{#1}}
\newcommand{\mf}[1]{\mathbf{#1}}
\newcommand{\dotp}[2]{\left\langle#1,#2\right\rangle}
\newcommand{\mb}{\mathbb}
\newcommand\argmin{\mathop{\mbox{argmin}}}
\newcommand{\sign}{\mathrm{sign}}
\def\r{\right}
\def\l{\left}
\begin{document}

\title{\Large Structured Recovery with Heavy-tailed Measurements: A Thresholding Procedure and Optimal Rates}
\runtitle{Recovery with Heavy-tailed Measurements}
\begin{aug}
\author{\fnms{Xiaohan} \snm{Wei}\thanksref{t1}}
  \thankstext{t1}{Department of Electrical Engineering, University of Southern California}
\runauthor{X. Wei}
\affiliation{University of Southern California}
\end{aug}

\begin{abstract}
This paper introduces a general regularized thresholded least-square procedure estimating a structured signal $\theta_*\in\mathbb{R}^d$ from the following observations:
\[
y_i = f(\dotp{\mathbf{x}_i}{\theta_*}, \xi_i),~i\in\{1,2,\cdots,N\},
\] 
with i.i.d. heavy-tailed measurements $\{(\mathbf{x}_i,y_i)\}_{i=1}^N$. A general framework analyzing the thresholding procedure is proposed, which boils down to computing three critical radiuses of the bounding balls of the estimator. Then, we demonstrate these critical radiuses can be tightly bounded in the following two scenarios: (1) The 
link function $f(\cdot)$ is linear, i.e.
$
y = \dotp{\mathbf{x}}{\theta_*} + \xi,
$
with 
 $\theta_*$ being a sparse vector and $\{\mathbf{x}_i\}_{i=1}^N$ being general heavy-tailed random measurements with bounded 
$(20+\epsilon)$-moments. (2) The function $f(\cdot)$ is arbitrary unknown (possibly discontinuous) and $\{\mathbf{x}_i\}_{i=1}^N$ are heavy-tailed elliptical random vectors with bounded $(4+\epsilon)$-moments. In both scenarios, we show under these rather minimal bounded moment assumptions, such a procedure and corresponding analysis lead to optimal sample and error bounds with high probability in terms of the structural properties of $\theta_*$. 
\end{abstract}
\maketitle

\section{Introduction}
In mathematical statistics, it is common to assume that data satisfy an underlying model along with a set of assumptions on this model -- for example, that the sequence of vector-valued observations is i.i.d. and has multivariate normal distribution. 
Since real-world data typically do not fit the model or satisfy the assumptions exactly (e.g., due to outliers and noise), reducing the number and strictness of the assumptions helps to reduce the gap between the ``mathematical'' world and the ``real'' world. The concept of robustness occupies a central role in understanding this gap. 
One of the viable ways to model noisy data and outliers is to assume that the observations are generated by a heavy-tailed distribution, and this is precisely the approach that we follow in this work.  


The goal of this paper is to propose and analyze robust estimators of a high-dimensional vector $\theta^*\in\mathbb{R}^d$ from the following model:
\begin{equation}\label{eq:single-index}
y = f(\dotp{\mathbf{x}}{\theta_*},\xi),
\end{equation}
where the measurement vector $(\mathbf{x},y)$ is heavy-tailed with only constant number of moments.
The function $f:\mathbb{R}^2\rightarrow\mathbb{R}$ is a link function which can be unknown, and $\xi$ is the real-valued noise independent of $\mathbf{x}$. Statistical estimation in the presence of outliers and heavy-tailed data has recently attracted the
attention of the research community, and the literature covers a wide range of topics. A comprehensive review is beyond the scope of this paper. Here we focus mainly on the works related to the single-index model \eqref{eq:single-index} and in particular its special case -- the sparse recovery problem.



\subsection{Sparse recovery}
 When $f(\cdot)$ is a linear function, i.e.
$
y = \dotp{\mathbf{x}}{\theta_*} + \xi,
$
and $\theta_*\in \mb R^d$ possesses a certain structure, the problem reduces down to the the classical sparse recovery. A typical method recovering $\theta_*$ from a sequence of i.i.d. copies of $(\mathbf{x},y)$, (i.e. $\l\{(\mathbf{x}_i,y_i)\r\}_{i=1}^N$)  is to solve the following regularized least-square optimization problem (LASSO):
\begin{equation}\label{eq:opt0}
\widehat{\theta} := \argmin_{\theta\in\mathbb{R}^d} \frac1N\sum_{i=1}^N\l(\dotp{\mathbf{x}_i}{\theta} - y_i \r)^2 + \lambda\Psi(\theta)
\end{equation}
where $\Psi:\mathbb{R}^d\rightarrow\mathbb{R}$ is a structure inducing norm function and $\lambda$ is a trade-off parameter.
Over the past two decades, extensive progress has been made regarding this problem under the assumption that the sensing vectors are isotropic subgaussian and the noise is also subgaussian, e.g. \citep{tibshirani1996regression, candes2006stable, candes2008restricted, bickel2009simultaneous, hastie2015statistical}. Formally, we have the following definition regarding the aforementioned properties of the measurements:
\begin{definition}\label{def:subgaussian}
A symmetric random vector $\mathbf{x}\in\mathbb{R}^d$ is isotropic if $\expect{\mathbf{x}\mathbf{x}^T} = \mathbf{I}_{d\times d}$. It is subgaussian if for any $\mathbf v\in\mathbb{S}^{d-1}$, 
$\expect{\l|\dotp{\mathbf v}{\mathbf{x}} \r|^p}^{1/p}\leq C\sqrt{p}\cdot\expect{\l| \dotp{\mathbf v}{\mathbf{x}} \r|^2}^{1/2},~\forall p\geq1$ for some absolute constant $C>0$.
\end{definition}

In the scenario where $\theta_*$ is a $s$-sparse vector and $\Psi(\cdot) = \|\cdot\|_1$, 
given the above assumption, proving the performance bound on \eqref{eq:opt0} involves demonstrating the fact that if 
\begin{equation}\label{eq:optimal-sample}
N\gtrsim s\log(d),
\end{equation}
then, the restricted isometric property (RIP) holds for the measurement matrix 
$$\mf \Gamma = \frac{1}{\sqrt{N}}[\mathbf{x}_1, \mathbf{x}_2,\cdots, \mathbf{x}_N]^T,$$ 
over all $s$-sparse vectors $\mathbf v\in\mathbb{R}^d$, i.e. there exists a constant $\delta\in(0,1)$, such that 
\begin{equation}\label{eq:rip}
(1-\delta)\|\mathbf v\|_2\leq \|\mathbf{\Gamma} \mathbf{v}\|_2 \leq(1+\delta)\|\mathbf v\|_2.
\end{equation}
After this, one can show that 
\begin{equation}\label{eq:optimal-error}
\l\| \widehat{\theta}_0 - \theta_* \r\|_2\lesssim \sqrt{\frac{s\log d}{N}}
\end{equation}
with very high probability.

As is mentioned in a few previous works, e.g. \citep{Fan-robust-estimation-2017, Fan-huber-regression-2017}, such an isotropic subgaussian assumption, although quite convenient in analysis, is unrealistic in many applications involving heavy-tailed data (e.g. the functional magnetic resonance imaging(fMRI) \cite{eklund2016cluster}).
On the other hand, the RIP condition is not true with the optimal sample rate \eqref{eq:optimal-sample} when the tail of $\dotp{\mathbf v}{\mathbf X}$ decays slower than subgaussian.
 This leads to the question: Can we still obtain the optimal sample and error rate as those of \eqref{eq:optimal-sample} and \eqref{eq:optimal-error} 
\textit{without isotropic subgaussian assumption}? 

A crucial step answering this question is made in the seminal work \citep{mendelson2014learning}, 
An important observation underlying this work is that, in a typical subgaussian scenario, only the lower bound of the RIP condition \eqref{eq:rip} is used in the proof of \eqref{eq:optimal-error}, and, in fact, the lower bound of \eqref{eq:rip} can be satisfied under much weaker assumptions than the upper bound. Lower bounding the quadratic form $\|\mathbf{\Gamma v}\|_2^2$ also appears in the earlier work \cite{oliveira2013lower}, where the author obtains a high probability lower bound on $\|\mathbf{\Gamma v}\|_2^2$ with weak moment assumptions on the matrix $\mathbf{\Gamma}$.
Following this idea,  \cite{mendelson2014learning} introduces the following ``small-ball'' condition for the random vector $\mathbf x\in\mathbb{R}^d$:
\begin{definition}
A random vector $\mathbf{x}$ is said to satisfy the \textbf{small-ball condition} over a set $\mathcal{H}\subseteq\mathbb{R}^d$ if for any $\mathbf v\in\mathcal{H}$, there exist positive constants $\delta$ and $Q$ so that
\[
\inf_{\mathbf{v}\in\mathcal{H}} Pr\l( \l| \dotp{\mathbf v}{\mathbf{x}} \r| \geq \delta\|\mathbf{v}\|_2 \r) \geq Q.
\]
\end{definition}
The small-ball assumption was first introduced in the seminal work \citep{koltchinskii2015bounding} to get rid of the strong tail assumption lower-bounding singular values of random matrices. Its power in regression problems was demonstrated in \citep{mendelson2014learning}. 
This assumption is much weaker than the subgaussian assumption and, in particular, it allows for heavy-tailed measurement vector $\mathbf x$ (see \cite{mendelson2014learning} for detailed discussions). Thus, under this small ball assumption with $\mathcal{H}$ being the set of all $s$-sparse vectors in $\mathbb{R}^d$, \citep{lecue2017sparse} shows that by assuming the condition that $\mathbf{x}$ has subgaussian property up to only $\log d$ moments, i.e.
$
\expect{\l|\dotp{\mathbf v}{\mathbf x} \r|^p}^{1/p}\leq C\sqrt{p}\cdot\expect{\l| \dotp{\mathbf v}{\mathbf x} \r|^2}^{1/2},~\forall 2\leq p \leq c_1\log d,
$
where $c_1>0$ is an absolute constant,
one can achieve the same sample and error rate \eqref{eq:optimal-sample} and \eqref{eq:optimal-error} with high probability. 

An immediate next question is: Can we obtain the optimal sample and error rate with moment assumption weaker than $\mathcal{O}(\log d)$? Recently, the works \citep{Fan-robust-estimation-2017} and \citep{Fan-huber-regression-2017} propose a new class of thresholded estimators for sparse recovery, based on the earlier work \citep{catoni2012challenging} on adaptive shrinkage for heavy-tailed mean estimation. While their methods are quite effective when dealing with the heavy-tailed noise $\{\xi_i\}_{i=1}^N$, the sample rate is in general suboptimal when it comes to heavy-tailed design vectors 
$\{\mathbf{x}_i\}_{i=1}^N$. More specifically, they show when the measurement vector $\mathbf{x}_i$ has only bounded $(4+\epsilon)$-moments, a form of thresholded LASSO estimator guarantees the optimal error rate \eqref{eq:optimal-error} with high probability, when the number of samples satisfies $N\geq s^2\log d$ and $\|\theta_*\|_1\leq R$ for some absolute constant $R>0$.

\subsection{Structured single-index model}
When $f(\cdot)$ is a general \textit{unknown} function (can be non-convex or even discontinuous), \eqref{eq:single-index} is often referred to as the \textit{single-index} model. Since $f(\dotp{\mathbf{x}_i}{\theta_*},\varepsilon_i) = f\l(a^{-1}\dotp{\mathbf{x}_i}{a\theta_*},\varepsilon_i\r)$ for any $a>0$, one can only hope to recover $\theta_*$ up to scaling and without loss of generality, we assume $\|\theta_*\|_2=1$.
The majority of the aforementioned works assume that the link function $f(\cdot)$ is linear, and their results cannot be applied directly to the case with unknown $f(\cdot)$.

However, when the measurement vectors $\mathbf{x}_i$'s are isotropic Gaussian,
a somewhat surprising result states that one can estimate $\theta_*$ directly up to scaling, avoiding any preliminary link function estimation step. More specifically, \citep{Brillinger1983A-generalized-l00} proves that 
$\eta\theta_\ast = \argmin_{\theta\in\mathbb{R}^d}\mb E\l( y - \dotp{\theta}{\mathbf{x}}\r)^2$, where $\eta =  \mathbb{E}\dotp{y\mathbf{x}}{\theta_\ast}$. 
The proof is also surprisingly simple which uses rotational invariance property of Gaussian vectors as follows:
\begin{align*}
\argmin_{\theta\in\mb R^d}\mb E\l( y - \dotp{\theta}{\mf{x}}\r)^2 =& \argmin_{\theta\in\mb R^d}\|\theta\|_2^2 - 2\mb E y\dotp{\mf x}{\theta} \\
=& \argmin_{\theta\in\mb R^d}\|\theta\|_2^2 -2 \mb E y\dotp{\mf x}{\theta_*}\dotp{\theta}{\theta_*} - 
2\mb E y\dotp{\mf x}{\theta_*^\perp}\dotp{\theta}{\theta_*^\perp}\\
=& \argmin_{\theta\in\mb R^d}\|\theta\|_2^2 -2 \mb E y\dotp{\mf x}{\theta_*}\dotp{\theta}{\theta_*} 
=\argmin_{\theta\in\mb R^d}\|\theta-\eta\theta_*\|_2^2,
\end{align*}
where we use $\theta_*^\perp$ to denote the vector in the $(\theta_*,\theta)$ plane perpendicular to $\theta_*$, and the third equality follows from the fact that  $\dotp{\mf x}{\theta_*^\perp}$ is a mean 0 Gaussian random variable independent of $\dotp{\mf x}{\theta_*}$.
Later, \citep{li1989regression} extends this result to the more general case of elliptically symmetric distributions, which includes the Gaussian distribution as a special case. In general, it is not always possible to recover $\theta_*$: see \citep{ai2014one} for an example in the case when $f(x)=sign(x)$.

More recently, the works \citep{plan2014high,plan2016generalized, yi2015optimal}
presented the non-asymptotic study for the
case of Gaussian measurements in the context of high-dimensional structured estimation. Basically, they show that when the measurement vectors $\l\{\mathbf{x}_i\r\}_{i=1}^N$ are Gaussian, the unknown nonlinearity can be treated as additional noise and one can recover $\theta_*$ up to scaling with the optimal sample and error rate by solving the LASSO problem \eqref{eq:opt0}. The work \citep{goldstein2016non} considers general non-Gaussian measurements with i.i.d. subgaussian entries and show that the performance of the estimator is further related to the Stein's measure of discrepancy between the distribution of the entries and Gaussian distribution. 
However, the key assumption of Gaussianity precludes situations where the measurements are heavy-tailed, and hence might be overly restrictive for some practical applications, such as high-dimensional noisy image recovery and face recognition problems \citep{face-recognition}.

To treat the heavy-tailed scenario, \citep{goldstein2016structured} considers the elliptically symmetric measurements $\l\{\mathbf{x}_i\r\}_{i=1}^N$, proposes an adaptively thresholded estimator of $\eta\theta_*$ and proves a tight non-asymptotic deviation bounds under the weak $(4+\epsilon)$-moments assumption on 
$\mathbf{x}_i$ and $y_i$. More specifically, suppose $\eta\theta_*$ lies in a compact set $\Theta$ and the measurements $\l\{\mathbf{x}_i\r\}_{i=1}^N$ are isotropic, then, define the estimator $\widehat \theta_N$ as the solution to the constrained optimization problem:
\begin{align*}
\widehat\theta_N:=\argmin\limits_{\theta\in \Theta}\|\theta\|_2^2  - \frac{2}{N}\sum_{i=1}^N\dotp{\widetilde y_i\widetilde{\mf x}_i}{\theta},
\end{align*}
where $\widetilde y_i$ and $\widetilde{\mf x}_i$ are properly truncated versions of $y_i$ and $\mathbf{x}_i$. They show that the proposed estimator enjoys the following tight performance bound for any $\beta\geq 2$ and $N\geq \beta^2\l( \omega(D(\Theta,\eta\theta_*)\cap\mathbb{S}^{d-1})+1 \r)^2$:

\begin{align}\label{eq:prev-bounds}
\mb P\left(\left\|\widehat{\theta}_N-\eta\theta_*\right\|_2\geq C_1\frac{(\omega(D(\Theta,\eta\theta_*)\cap\mathbb{S}^{d-1})+1)\beta}{\sqrt{N}}\right)\leq C_2e^{-\beta/2},
\end{align}
where $C_1$ is a dimension-free positive constant depending only on the moment bounds of $y_i$ and $\mathbf{x}_i$, $C_2$ is an absolute constant and $\omega(D(\Theta,\eta\theta_*)\cap\mathbb{S}^{d-1})$ is the Gaussian mean-width on the intersection of the descent cone of $\Theta$ at $\eta\theta_*$ and a unit sphere. Note that such a quantity measures the complexity of recovery $\theta_*$. For example, 
 the work \citep{chandrasekaran2012convex} shows that when taking 
$\Theta = \{\mathbf{x}\in\mathbb{R}^d:~\|\mathbf{x}\|_{1}\leq1\}$, i.e. the unit ball of $\|\cdot\|_1$, and $\theta_*$ is $s$-sparse, we have
$\omega(\mathbb{S}^{d-1}\cap D(\Theta,\theta_*))$ is on the order of $\sqrt{s\log(d/s)}$.

The problem with the above estimator is that it requires the full  knowledge of the covariance structure of $\mathbf{x}_i$, i.e. it is isotropic. It is not known how to obtain the optimal sample and error rate estimating $\eta\theta_*$ with only bounded moment assumption and \textit{without} the knowledge of the covariance structure.
It is also worth noting that \citep{yang2017stein} proposes a high-dimensional thresholded score function estimator, which allows one to take general  measurement vectors with i.i.d. entries and bounded $(4+\epsilon)$-moments, albeit at the cost of knowing the distribution function of $\mathbf{x}_i$.

\subsection{Our contributions}
This paper introduces a simple regularized thresholded procedure recovering a structured signal $\theta_*\in\mathbb{R}^d$, by feeding \eqref{eq:opt0} with an adaptively truncated version of $\l\{\l(\mathbf{x}_i,y_i\r)\r\}_{i=1}^N$. We propose a general analysis framework which boils down to computing three critical radiuses of bounding balls regarding the estimator.
Based on this framework, we show the following:
\begin{enumerate}
\item When the link function $f(\cdot)$ is linear, $\Psi(\cdot) = \|\cdot\|_1$, and $\theta_*$ is an $s$-sparse vector with $\|\theta_*\|_2\leq1$, one only requires finite $(20+\epsilon)$-moments on $\mathbf{x}_i$, $y_i$ and  finite $(5+\epsilon)$-moments on the noise $\xi$ in order to guarantee the optimal sample and error rate regarding the estimator. This improves upon the previous suboptimal sample rate of $N\gtrsim s^2\log(d)$ for bounded moment measurements obtained in \citep{Fan-robust-estimation-2017, Fan-huber-regression-2017}, removing the assumption that $\|\theta_*\|_1\leq R$ in aforementioned works, and at the same time relaxing the $c_1\log d$ moment requirement in \citep{lecue2016regularization-2, lecue2017sparse}
for sparse recovery with optimal rates.
\item When the link function $f(\cdot)$ is arbitrary unknown, $\mathbf{x}_i$ is elliptical symmetric, and the set of sub-differentials of $\Psi(\cdot)$ norm near $\theta_*$ is large, one can recover $\theta_*$ up to constant scaling, requiring only $(4+\epsilon)$ moments on $\mathbf{x}_i$ and $y_i$. The sample and error rates depend on the structural property of $\theta_*$ and is tight. In particular, we show our bounded delivers the
optimal sample and error rates in the sparse and low-rank recovery scenarios.
\end{enumerate}

It is also worth noting that our estimators require neither the knowledge of covariance matrix of $\mathbf{x}_i$ nor explicit form of distribution functions, thereby significantly relaxing the assumptions on prior information in previous robust recovery works (e.g. \citep{goldstein2016structured, yang2017stein}). 

The rest of the paper is organized as follows: In Section \ref{sec:general-framework}, we introduce the general thresholding procedure for heavy-tailed measurements and main performance bounds. In Section \ref{sec:prelim-analysis}, we introduce a unified framework analyzing the thresholded estimators and sketch the proofs of main results. The conclusion is given in Section \ref{sec:conclude} and we detail the proofs in appendices.

\section{Main Results on Thresholded Estimators}\label{sec:general-framework}
Our goal is to robustify the penalized least-square \eqref{eq:opt0} in the scenario of heavy-tailed measurements $\{(\mathbf{x}_i,y_i)\}_{i=1}^N$. Throughout the paper, we adopt the following assumption on the measurements:
\begin{assumption}\label{assumption:moment}
The samples $\{(\mathbf{x}_i,y_i)\}_{i=1}^N$ are i.i.d. copies of $(\mathbf{x},y)$ with $\expect{\mathbf{x}} = 0$, generated from the model \eqref{eq:single-index} such that for some absolute constant $q>0$, there exists absolute constants $\nu,\nu_q,\kappa>0$,
\begin{itemize}
\item Bounded kutosis: $\sup_{\mathbf{v}\in\mathbb{S}^{d-1}}\expect{|\dotp{\mathbf{x}}{\mathbf{v}}|^4}\leq \nu$.
\item Bounded moments: $\|y\|_{L_q}:=\expect{|y|^q}^{1/q}\leq \nu_q$ and  $\|x_i\|_{L_q}:=\expect{|x_i|^q}^{1/q}\leq \nu_q,~\forall i\in\{1,2,\cdots,d\}$.
\item Non-degeneracy: $\inf_{\mathbf{v}\in\mathbb{S}^{d-1}}\expect{|\dotp{\mathbf{x}}{\mathbf{v}}|^2}\geq\kappa$.
\end{itemize}
\end{assumption}

The values of $q$ in the above assumption are problem-specific. In the sparse recovery scenario with general measurements, we require $q>20$. For the single-index model with elliptical symmetric measurements, we only require $q>4$.

Next, we have the following basic definitions:

\begin{definition}[Gaussian mean width]
\label{gmw}
The Gaussian mean width of a set $T \subseteq \mathbb{R}^d$ is defined as
\[
\omega(T):=\expect{\sup_{t\in T}~\langle\mathbf{g},t\rangle},
\]
where $\mathbf{g}\sim\mathcal{N}(0,\mathbf{I}_{d\times d})$. 
\end{definition}

\begin{definition}[$\psi_q$-norm]
For $q \ge 1$, the $\psi_q$-norm of a random variable $X\in \mb R$ is given by
\[\|X\|_{\psi_q}=\sup_{p\geq1}p^{-\frac{1}{q}}(\expect{|X|^p})^{\frac1p}.\]
Specifically, the cases $q=1$ and $q=2$ are known as the sub-exponential and sub-Gaussian norms respectively.  
We will say that $X$ is sub-exponential if $\|X\|_{\psi_1}<\infty$, and $X$ is subgaussian if $\|X\|_{\psi_2}<\infty$.
\end{definition}

Let $\mathbf{\Sigma} := \expect{\mathbf{x}\mathbf{x}^T}$ be the covariance matrix of the measurement vector. 
A centered random vector $\mathbf{x}\in\mathbb{R}^d$ has elliptically symmetric (alternatively, elliptically contoured or just elliptical) distribution with parameters $\mathbf{\mathbf{\Sigma}}$ and $F_{\mu}$, denoted $\mathbf{x}\sim\mathcal{E}(0,~\mathbf{\mathbf{\Sigma}},~F_{\mu})$, 
if 
\begin{equation}
\label{elliptical-definition}
\mathbf{x}\stackrel{d}{=}\mu\mathbf{B}U,
\end{equation}
where $\stackrel{d}{=}$ denotes equality in distribution, $\mu$ is a scalar random variable with cumulative distribution function $F_{\mu}$, $\mathbf{B}$ is a fixed $d\times d$ matrix such that the covariance matrix $\mathbf{\mathbf{\Sigma}}=\mathbf{B}\mathbf{B}^T$, and $U$ is uniformly distributed over the unit sphere $\mathbb{S}^{d-1}$ and independent of $\mu$. 
Note that distribution $\mathcal{E}(0,~\mathbf{\mathbf{\Sigma}},~F_{\mu})$ is well defined, as if $\mathbf{B}_1\mathbf{B}_1^T=\mathbf{B}_2\mathbf{B}_2^T$, then there exists a unitary matrix $\mathbf{Q}$ such that $\mathbf{B}_1=\mathbf{B}_2\mathbf{Q}$, and $\mathbf{Q}U\stackrel{d}{=}U$. Along these same lines, we note that representation \eqref{elliptical-definition} is not unique, as one may replace the pair $(\mu,~\mathbf{B})$ with $\left(c\mu,~\frac{1}{c}\mathbf{B}\mathbf{Q}\right)$ for any constant $c>0$ and any orthogonal matrix $\mathbf{Q}$. 
To avoid such ambiguity, in the following we allow $\mathbf{B}$ to be any matrix satisfying $\mathbf{B}\mathbf{B}^T=\mathbf{\mathbf{\Sigma}}$, and noting that the covariance matrix of $U$ is a multiple of the identity.

An important special case of the family $\mathcal{E}(0,~\mathbf{\mathbf{\Sigma}},~F_{\mu})$
of elliptical distributions is the Gaussian distribution $\mathcal{N}(0,\mathbf{\mathbf{\Sigma}})$, where $\mu=\sqrt{z}$ with $z \stackrel{d}{=} \chi_d^2$, and the characteristic generator is $\psi(x)=e^{-x/2}$. Note that $\expect{\mu^2}$ is usually of order $d$.

Define a scaling constant 
\begin{equation}\label{eq:scaling-const}
\eta := \expect{y\dotp{\mathbf{x}}{\theta_*}}\left/\|\mathbf{\Sigma}^{1/2}\theta_*\|_2^2\right.
\end{equation}
We assume $\eta \neq 0$. 
Note that $\eta = 1$ when 
$f(\cdot)$ is a linear function, and the noise $\xi$ is independent of $\mathbf{x}$. In more general scenarios where $f(\cdot)$ is arbitrary, this assumption implies 
$ \expect{f(\dotp{\mathbf{x}}{\theta_*},\varepsilon)\dotp{\mathbf{x}}{\theta_*}}\neq0$. In particular, it precludes the case where $f:\mathbb{R}^2\rightarrow\mathbb{R}$ is symmetric on the first variable.

Based on these assumptions, our robust estimator involves generating the truncated measurements $\{(\widetilde{\mathbf{x}}_i,\widetilde{y}_i)\}_{i=1}^N$ from the samples $\{(\mathbf{x}_i,y_i)\}_{i=1}^N$ and solving the following regularized thresholded least-square:
\begin{equation}\label{eq:rls}
\widehat{\theta}_N := \argmin_{\theta\in\mathbb{R}^d} \frac1N\sum_{i=1}^N\l(\dotp{\widetilde{\mathbf{x}}_i}{\theta} - \widetilde{y}_i \r)^2 + \lambda\Psi(\theta),
\end{equation}
where the precise form of $\{(\widetilde{\mathbf{x}}_i,\widetilde{y}_i)\}_{i=1}^N$ will be problem-specific:
\begin{itemize}
\item
In the case of sparse recovery, we take
$\widetilde{\mathbf{x}}_i$ such that 
\begin{equation}\label{eq:trunc1}
\widetilde{x}_{ij} = \sign\l(x_{ij}\r)\l(|x_{ij}|\wedge\tau\r),~~\forall j\in\{1,2,\cdots,d\},
\end{equation}
and $\widetilde{y}_i =\sign(y_i)\l( |y_i|\wedge\tau\r)$, where $\tau = \l(N/\log \l(ed\r)\r)^{1/4}$. 
\item In the case of single-index model with elliptical symmetric measurements, we take
\begin{equation}\label{eq:trunc2}
\widetilde{\mathbf{x}}_i = \frac{\sqrt{d}\mathbf{x}_i}{\|\mathbf{x}_i\|_2}\cdot\l( \frac{\|\mathbf{x}_i\|_2}{\sqrt{d}}\wedge\tau \r)
\end{equation}
and $\widetilde{y}_i =\sign(y_i)\l( |y_i|\wedge\tau\r)$, where $\tau = N^{2/(q+4)}$.
\end{itemize}

For the rest of the paper, the notations $B_\Psi(\mf x, r)$, $B_2(\mf x, r)$ denote the ball of radius $r$ centered at $\mf x$ for $\Psi$-norm, 2-norm respectively, and $S_\Psi(\mf x, r)$, $S_2(\mf x, r)$ denote the sphere of radius $r$ centered at $\mf x$ for $\Psi$-norm, 2-norm respectively.

\subsection{New result on sparse recovery}
Recall that in the sparse recovery problem we have the measurements $\l\{(\mf x_i, y_i)\r\}_{i=1}^N$ are heavy-tailed satisfying
\[
y_i = \dotp{\mf x_i}{\theta_*}+ \xi_i,~\forall i\in\{1,2,\cdots,N\}.
\]
We assume that $\theta_*$ is an $s$-sparse vector such that 
$\|\theta_*\|_2\leq1$, and also the following holds.

\begin{assumption}\label{assume:sparse-recovery}
There exists some $q>20$ such that Assumption \ref{assumption:moment} holds and the noise $\xi_i$ satisfies $\|\xi_i\|_{L_{q'}}<\infty$ for some $q'>5$, where 
$\|\xi_i\|_{L_{q'}}= \expect{|\xi_i|^q}^{1/q}$.
\end{assumption}

Recall that the scaling constant in this scenario is $\eta=1$, the $\Psi$-norm is taken to be $\|\cdot\|_1$-norm and the estimator is
\[
\widehat{\theta}_N := \argmin_{\theta\in\mathbb{R}^d} \frac1N\sum_{i=1}^N\l(\dotp{\widetilde{\mathbf{x}}_i}{\theta} - \widetilde{y}_i \r)^2 + \lambda\|\theta\|_1,
\]
where $\l\{(\wt{\mf x}_i, \wt{y}_i)\r\}_{i=1}^N$ are given by \eqref{eq:trunc1}.
we have the following theorem on the performance of our proposed thresholded LASSO estimator:

\begin{theorem}\label{thm:sparse-recovery}
Let $\delta = \frac12\sqrt{\frac{\kappa}{2}}$ and $Q=\frac{\kappa^2}{8\nu}$.
Suppose  Assumption \ref{assume:sparse-recovery} holds, 
$N\geq C\l( \frac{s_0}{Q^2} + \frac{\nu+1}{\nu} \r)\beta^2\log(ed) + cs\log(ed)$ for some absolute constants $C,~c>1$, $\lambda = \overline C_0(\nu_q,\nu,\xi)  \frac{wu^2v+w\beta^{3/4}}{\delta^2Q}\sqrt{\frac{\log(ed)}{N}}$ , and $s_0 = \frac{c_0\sqrt{\nu}}{\delta^2Q}s\leq d$ for some absolute constant $c_0>0$. Then, with probability at least 
\begin{multline*}
1-c' \l(e^{-\beta}+e^{-v^2}
+ (u^{-q/4}+u^{-q'})(ed)^{-(c-1)}\r.\\
\l.+(eN)^{-\frac{q}{12}+1}(\log(eN))^{q/6}w^{-q/6} +  (eN)^{-\frac{q'}{4}+1}(\log(eN))^{q'/2}w^{-q'}\r),
\end{multline*}
for some absolute constant $c'>0$,
we have 
\begin{align*}
\|\widehat\theta_N - \theta_*\|_2\leq& \overline C_1(\nu_q,\nu,\xi) \frac{wu^2v+w\beta^{3/4}}{\delta^2Q}\sqrt{\frac{s\log(ed)}{N}}\\
\|\widehat\theta_N - \theta_*\|_1\leq& \overline C_2(\nu_q,\nu,\xi) \frac{wu^2v+w\beta^{3/4}}{\delta^2Q}s\sqrt{\frac{\log(ed)}{N}},
\end{align*}
for any $\beta,u,v,w>6$, where 
$\overline C_i(\nu_q,\nu,\xi):= C_i\l(\nu_q^3+\nu_q^{5/2}+\nu_q^{3/2} + \|\xi\|_{L_{q'}}(\nu_q+1) + \nu_q^2+\nu_q^4\r)^2,~i=0,1,2$, and $C_i$ are absolute constants.
\end{theorem}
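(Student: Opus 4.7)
The plan is to invoke the general three-radii framework developed in Section \ref{sec:prelim-analysis} and apply it to the truncated LASSO defined by \eqref{eq:rls}--\eqref{eq:trunc1}. The three ingredients I would need to supply are: (a) a lower isometry / restricted eigenvalue bound for the truncated Gram matrix on the relevant descent cone; (b) a high-probability bound on the empirical multiplier (noise $\times$ design) process; and (c) a bound on the deterministic bias introduced by the componentwise truncation. Once these three radii are computed, the framework mechanically outputs the $\ell_2$ and $\ell_1$ error bounds stated in the theorem, and determines the required sample size and the tuning parameter $\lambda$. The choice $\tau=(N/\log(ed))^{1/4}$ is made precisely to balance the truncation bias against the variance inflation in (b); this balance is what drives the exponents $20+\eps$ on the design and $5+\eps$ on the noise.

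For (a), I would first verify that $\tilde{\mathbf{x}}_i$ inherits a small-ball condition with constants $(\delta,Q)$ depending only on $\kappa$ and $\nu$. Specifically, Paley-Zygmund applied to $\langle\mathbf{x},\mathbf{v}\rangle$ using the non-degeneracy and bounded-kurtosis parts of Assumption \ref{assumption:moment} gives $\Pr(|\langle\mathbf{x},\mathbf{v}\rangle|\geq \tfrac12\sqrt{\kappa/2}\,\|\mathbf{v}\|_2)\geq \kappa^2/(8\nu)$, and since $\tau\to\infty$ with $N$ the event that truncation leaves $\langle\mathbf{x},\mathbf{v}\rangle$ unchanged has probability close to one, so the same small-ball bound transfers to $\tilde{\mathbf{x}}_i$. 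I then apply Mendelson's small-ball method combined with a Bernstein/VC-type bound for the bounded random variables $\langle\tilde{\mathbf{x}}_i,\mathbf{v}\rangle^2\mathbf{1}\{|\langle\tilde{\mathbf{x}}_i,\mathbf{v}\rangle|\leq \delta\|\mathbf{v}\|_2\}$, indexed by the cone $\{\mathbf{v}:\|\mathbf{v}_{S^c}\|_1\leq 3\|\mathbf{v}_S\|_1,\ |S|\leq s\}\cap S_2^{d-1}$. The empirical width of this cone is controlled via the duality $\omega\lesssim \sqrt{s\log(ed/s)}$, and this is where the term $s_0=c_0\sqrt{\nu}s/(\delta^2 Q)$ enters and yields the sample requirement $N\gtrsim s\log(ed)$.

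For (b) and (c), I would split the effective residual $\tilde{y}_i-\langle\tilde{\mathbf{x}}_i,\theta_*\rangle = \xi_i + (\tilde{y}_i-y_i) + \langle \mathbf{x}_i-\tilde{\mathbf{x}}_i,\theta_*\rangle$ and bound the dual norm (here $\|\cdot\|_\infty$) of $\tfrac{1}{N}\sum_i \tilde{\mathbf{x}}_i(\tilde{y}_i-\langle\tilde{\mathbf{x}}_i,\theta_*\rangle)$. The variance piece is handled coordinate-wise by a Bernstein inequality applied to the bounded products $\tilde{x}_{ij}\cdot(\tilde{y}_i-\langle\tilde{\mathbf{x}}_i,\theta_*\rangle)$, whose boundedness is $O(\tau^2)$; after a union bound over $d$ coordinates this contributes the term $\sqrt{\log(ed)/N}$, with the extra factors $w,u,v,\beta$ absorbing the failure probability of the truncation events $\{|x_{ij}|>\tau\}$ and $\{|y_i|>\tau\}$, which are controlled by Markov/Chebyshev using the $q$-th and $q'$-th moments (this is the source of the $(eN)^{-q/12+1}$ and $(eN)^{-q'/4+1}$ tails). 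For (c), each coordinate of the bias satisfies $|\mathbb{E}[\tilde{x}_{ij}(\tilde{y}_i-\langle\tilde{\mathbf{x}}_i,\theta_*\rangle)]-0|\lesssim \tau^{-(q\wedge q'-3)}\cdot(\text{moments})$ by a standard truncation calculation, and the choice $\tau = (N/\log(ed))^{1/4}$ together with $q>20$, $q'>5$ makes this bias smaller than $\sqrt{\log(ed)/N}$.

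Finally I would feed these three radius estimates into the framework of Section \ref{sec:prelim-analysis} to choose $\lambda$ equal to (a constant multiple of) the maximum of the multiplier and bias radii, which reproduces the value stated in the theorem, and then read off $\|\hat h\|_2\lesssim \lambda\sqrt{s}$ and $\|\hat h\|_1\lesssim \lambda s$ from the standard LASSO cone inequality once the restricted eigenvalue from (a) is available. The main obstacle I expect is step (b)/(c) under such weak moments: the multiplier process couples the heavy-tailed residual with the truncated design, and the mean of $\tilde{\mathbf{x}}_i(\tilde{y}_i-\langle\tilde{\mathbf{x}}_i,\theta_*\rangle)$ is \emph{not} zero because truncation biases both factors; making the bias smaller than the optimal stochastic order $\sqrt{\log(ed)/N}$ is what forces $q>20$ rather than just $q>4$, since one must control $\mathbb{E}[x_{ij}\langle\mathbf{x},\theta_*\rangle\mathbf{1}\{|x_{ik}|>\tau\}]$ for several indices simultaneously, each of which costs a factor of moments. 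Careful bookkeeping of these moment costs, together with the union-bound over $d$ coordinates, is the technical heart of the argument.
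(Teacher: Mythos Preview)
Your high-level plan---compute the three radii and feed them into Theorem~\ref{thm:master}---matches the paper exactly, and your decomposition of the effective residual in (b)/(c) is the right one. But two of your three implementations have genuine gaps.

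\textbf{Gap in (a): small-ball transfer on the cone.} Coordinate-wise truncation at level $\tau=(N/\log(ed))^{1/4}$ does \emph{not} preserve the small-ball condition uniformly over the descent cone. For a dense $\mathbf v$ in the cone (and such vectors exist: the cone is $d$-dimensional), $\langle\widetilde{\mathbf x},\mathbf v\rangle=\langle\mathbf x,\mathbf v\rangle$ only on the event $\{\max_{j\in\mathrm{supp}(\mathbf v)}|x_j|\le\tau\}$, whose failure probability scales with $|\mathrm{supp}(\mathbf v)|\cdot\tau^{-q}$, not $s\cdot\tau^{-q}$. The paper handles this by a two-step argument: first it verifies the truncated small-ball \emph{only on $s_0$-sparse vectors} (Lemma~\ref{lem:weak-small-ball}), runs a VC argument over $\mathcal G_{s_0}$ to get $\inf_{\mathbf v\in\mathcal G_{s_0}}\|\widetilde{\mathbf\Gamma}\mathbf v\|_2\ge c$ (Lemma~\ref{lem:VC}), and only then extends to $B_1(0,\rho)\cap S_2(0,r)$ via the Maurey-type Lemma~\ref{lem:quad-form}, paying the additive penalty $\rho^2\max_j\|\widetilde{\mathbf\Gamma}\mathbf e_j\|_2^2/s_0$. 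This sidesteps any small-ball claim for dense $\mathbf v$.

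\textbf{Gap in (b): Bernstein on ``bounded'' products.} Your claim that the coordinate-wise products are bounded by $O(\tau^2)$ is false for the noise piece: $\xi_i$ is never truncated, so $\widetilde x_{ij}\xi_i$ is unbounded. Even for the truncated pieces, $|\langle\widetilde{\mathbf x}_i,\theta_*\rangle|\le\tau\|\theta_*\|_1\le\tau\sqrt s$, so the a.s.\ bound on the summand is $\tau^2(1+\sqrt s)=(1+\sqrt s)\sqrt{N/\log(ed)}$; plugging this into Bernstein and union-bounding over $d$ coordinates gives a tail exponent of order $-\log(ed)/\sqrt s$, which is useless for large $s$. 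The paper instead symmetrizes, applies the Montgomery--Smith Rademacher inequality, splits each sum into top-$k$ and tail ordered statistics with $k\asymp\log(ed)/\log(eN/\log(ed))$, and bounds the pieces via H\"older with carefully chosen exponents (Lemmas~\ref{lem:supp-11}--\ref{lem:supp-14} and \ref{lem:xi-1}--\ref{lem:xi-2}). It is precisely this ordered-statistics argument---not the bias in (c)---that forces $q>20$ and $q'>5$: the H\"older split costs a factor $4$ in the moment exponent at two separate places, and the tail-sum integrability costs another factor. Your attribution of the $q>20$ requirement to the bias in (c) is off; Lemma~\ref{lem:bound-V} shows the bias is $O(\rho\sqrt{\log(ed)/N})$ using only $8$th moments.
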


\subsection{New result on single-index model}
Consider recovering $\theta_*\in\mb{R}^d$ from the non-linear observation $y_i = f(\dotp{\theta_*}{\mf x_i},\xi_i),~i=1,2,\cdots,N$, where $\mf{x}_i,~i=1,2,\cdots,N$ are i.i.d. elliptical symmetric random vectors,
 $\xi_i,~i=1,2,\cdots, N$ are i.i.d. noise independent of $\mf x_i$, and $f:\mb R^2\rightarrow \mb R$ is an arbitrary fixed unknown function such that 
$\expect{y_i \dotp{\theta_*}{\mf x_i}} \neq 0$. Without loss of generality, we assume that $\l\|\mathbf{\Sigma}^{1/2} \theta_* \r\|_2^2= \dotp{\mathbf{\Sigma}^{1/2} \theta_*}{\mathbf{\Sigma}^{1/2} \theta_*} = 1$, then, the scaling constant defined in \eqref{eq:scaling-const} is 
\begin{equation}\label{eq:scaling-const-2}
\eta = \expect{y_i \dotp{\theta_*}{\mf x_i}}.
\end{equation}

\begin{assumption}\label{assume:single-index}
There exists some $q=4(1+\epsilon)$ for some $\epsilon>0$ such that Assumption \ref{assumption:moment} holds.
\end{assumption}

Recall that our estimator in this scenario is \eqref{eq:rls}
with 
$\{(\widetilde{\mathbf{x}}_i,\widetilde{y}_i)\}_{i=1}^N$ is defined according to \eqref{eq:trunc2}.
When $\eta\theta_*$ is close to an $s$-sparse vector $\theta_0$, and $\Psi(\cdot) = \|\cdot\|_1$, we have the following theorem:
\begin{theorem}\label{thm:main-sparse-2}
Let $\delta = \frac12\sqrt{\frac{\kappa}{2}}$ and $Q=\frac{\kappa^2}{8\nu}$.
Suppose Assumption \ref{assume:single-index} holds and the vector $\eta\theta_*$ satisfies 
$$\|\eta\theta_*-\theta_0\|_1\leq c(\nu,\kappa,\nu_q)\frac{s\sqrt{\log(ed/s)}}{\sqrt{N}},$$
for some $\theta_0$ such that $\|\theta_0\|_0 = s$, then, under the condition that 
\[
N \geq c_0(\nu,\kappa,\nu_q)\l(\frac{\delta t + \beta\sqrt{s\log(ed/s)}}{\delta Q}\r)^2 + \frac{4}{Q^2}\frac{\expect{\mu^2}^2\lambda_{\max}(\mf\Sigma)}{d^2}.
\]
and $\lambda = c_1(\nu,\kappa,\nu_q)\beta\sqrt{\log(ed/s)/N}$, then,
\[
\l\| \widehat{\theta}_N - \eta\theta_* \r\|_2\leq C_0(\nu,\kappa,\nu_q)\frac{\beta\sqrt{s\log(ed/s)}}{\sqrt{N}},
~~
\l\| \widehat{\theta}_N - \eta\theta_* \r\|_1\leq C_1(\nu,\kappa,\nu_q)\frac{\beta s\sqrt{\log(ed/s)}}{\sqrt{N}}, \]
with probability at least $1-e^{-\beta}-e^{-t^2}$ for any $\beta,t>2$, where 
$C_0(\nu,\kappa,\nu_q),~C_1(\nu,\kappa,\nu_q), ~c(\nu,\kappa,\nu_q),~c_0(\nu,\kappa,\nu_q)$ and 
$c_1(\nu,\kappa,\nu_q)$ are all constants depending only on $\nu,\kappa,\nu_q$ in Assumption \ref{assumption:moment}.
\end{theorem}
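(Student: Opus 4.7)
The plan is to apply the general three-radius framework developed in Section \ref{sec:prelim-analysis} and instantiate it for the specific setting of elliptically symmetric measurements with a nearly $s$-sparse target $\eta\theta_*$ and $\Psi(\cdot)=\|\cdot\|_1$. That framework reduces the deviation bound on $\widehat{\theta}_N-\eta\theta_*$ to verifying three quantitative statements: (i) a lower-isometry / small-ball bound on the truncated empirical quadratic form over a localized tangent cone of $\|\cdot\|_1$ at $\theta_0$; (ii) a multiplier-process bound for the cross term $v\mapsto\frac{1}{N}\sum_i\widetilde{y}_i\langle\widetilde{\mathbf{x}}_i,v\rangle - \eta\langle\mathbf{\Sigma}\theta_*,v\rangle$ on the same set; and (iii) an $\ell_\infty$-type bound on $\frac{1}{N}\sum_i\widetilde{y}_i\widetilde{\mathbf{x}}_i-\eta\mathbf{\Sigma}\theta_*$ that fixes the regularization level $\lambda$. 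Once each of the three quantities is shown to be of order $\beta\sqrt{s\log(ed/s)/N}$, standard convex-analytic manipulations on the descent cone of $\|\cdot\|_1$ at $\theta_0$ yield the stated $\ell_2$ and $\ell_1$ rates, with the near-sparsity slack $\|\eta\theta_*-\theta_0\|_1$ absorbed into the same order by hypothesis.

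The decisive structural fact I would exploit is the representation $\mathbf{x}_i\stackrel{d}{=}\mu_i\mathbf{B}U_i$ with $U_i$ uniform on $\mathbb{S}^{d-1}$ and independent of $\mu_i$, combined with the truncation rule \eqref{eq:trunc2}, which acts only on the scalar norm $\|\mathbf{x}_i\|_2$ and leaves the direction $\mathbf{B}U_i/\|\mathbf{B}U_i\|_2$ intact. Consequently, for any direction $v$, $\langle\widetilde{\mathbf{x}}_i,v\rangle$ factors as (a bounded scalar)$\times\langle U_i,\mathbf{B}^Tv\rangle$, and the spherical component is effectively subgaussian. The terms $\expect{\mu^2}\lambda_{\max}(\mathbf{\Sigma})/d^2$ appearing in the sample-size condition arise from approximating $\widetilde\mu_i^2$ by $\mu_i^2$ under the truncation; the control on these is a direct Markov/Chebyshev computation using $\|x_i\|_{L_q}\le\nu_q$. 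For step (i) I would combine the small-ball condition (which holds automatically for elliptical laws because $\inf_v\expect{|\langle\mathbf{x},v\rangle|^2}\ge\kappa$ and the fourth moment is bounded) with Mendelson's small-ball method and a chaining bound over the localized tangent cone, whose Gaussian mean width is $O(\sqrt{s\log(ed/s)})$, obtaining the lower isometry with constant $Q\delta^2$.

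For step (iii), I would split $\widetilde{y}_i\widetilde{\mathbf{x}}_i=\expect{\widetilde y\widetilde{\mathbf{x}}}+(\widetilde{y}_i\widetilde{\mathbf{x}}_i-\expect{\widetilde y\widetilde{\mathbf{x}}})$ and handle the two pieces separately. Each coordinate of the fluctuation is a bounded centered variable (by $\tau^2=N^{4/(q+4)}$), so a Bernstein bound and a union bound over $d$ coordinates give the rate $\beta\sqrt{\log(ed/s)/N}$ with the stated dependence on $\beta$. The bias $\expect{\widetilde y\widetilde{\mathbf{x}}}-\eta\mathbf{\Sigma}\theta_*$ relies on the elliptical-distribution identity $\expect{y\mathbf{x}}=\eta\mathbf{\Sigma}\theta_*$ of Brillinger/Li (this is precisely why the hypothesis $\eta\ne 0$, and hence non-symmetry of $f$ in its first argument, enters), and the truncation cost is of order $\tau^{-(q-3)}\lesssim N^{-2(q-3)/(q+4)}$, which is smaller than the fluctuation under the prescribed $\tau=N^{2/(q+4)}$.

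The hard part will be step (ii): obtaining a uniform bound on the multiplier process under only $(4+\epsilon)$ moments on $y$ and $\mathbf{x}$. Standard generic chaining needs sub-Gaussian increments, which are unavailable here. The saving grace is that the truncation \eqref{eq:trunc2} is tailored for exactly this step: on the typical event both $\widetilde{y}_i$ and $\widetilde{\mathbf{x}}_i$ are bounded, so a truncated process admits a sub-Gaussian chaining estimate producing the target rate $\beta\sqrt{s\log(ed/s)/N}$, while on the complementary rare event the contribution can be absorbed by a Markov estimate using the $(4+\epsilon)$-moment. Technically, I would split the process as $\widetilde{y}_i\widetilde{\mathbf{x}}_i - y_i\mathbf{x}_i + y_i\mathbf{x}_i$; control the first part by a pointwise $L_1$ argument exploiting the moment bound and the truncation level, and control $\frac{1}{N}\sum_i y_i\langle\mathbf{x}_i,v\rangle-\eta\langle\mathbf{\Sigma}\theta_*,v\rangle$ uniformly via an $\ell_\infty$-type Bernstein bound combined with the $\ell_1\to\ell_\infty$ duality on the tangent cone, which is why the $\sqrt{\log(ed/s)}$ factor appears and not $\sqrt{d}$. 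Summing the three controls and inserting them into the framework from Section \ref{sec:prelim-analysis} delivers the theorem.
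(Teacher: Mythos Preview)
Your step~(i) is right and mirrors the paper (truncated small-ball plus Mendelson's lower bound, with the Rademacher complexity controlled by the Gaussian mean width of $B_2(0,r)\cap B_1(0,\rho)$; this is Corollary~\ref{lem:bound-Q-2} together with Lemma~\ref{lem:mendelson-sparse}). The gap is in steps~(ii)--(iii). The coordinate-wise Bernstein route cannot yield the claimed rate: the truncation~\eqref{eq:trunc2} bounds only $\|\widetilde{\mathbf{x}}_i\|_2\le\sqrt{d}\,\tau$, not individual entries, so the best almost-sure bound on $|\widetilde{y}_i\widetilde{x}_{ij}|$ is $\sqrt{d}\,\tau^2$, and for $\tau=N^{2/(q+4)}$ the Bernstein residual $\sqrt{d}\,\tau^2\log(d)/N$ forces $N$ to be polynomial in $d$, not of order $s\log(ed/s)$. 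Your fallback of passing to the untruncated $y_i\mathbf{x}_i$ and applying an $\ell_\infty$-Bernstein bound also fails, since $y_ix_{ij}$ has only $q/2>2$ moments and satisfies no Bernstein-type hypothesis. You sense the correct mechanism when you write that a ``truncated process admits a sub-Gaussian chaining estimate,'' but the technical plan you then describe (split $\widetilde{y}\widetilde{\mathbf{x}}-y\mathbf{x}+y\mathbf{x}$, pointwise $L_1$ on the first piece, Bernstein on the second) does not implement that idea and in fact trades one heavy-tailed problem for another.

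What the paper actually uses is a dedicated generic-chaining lemma for truncated multiplier processes (Lemma~\ref{lem:trunc-multi}, taken from \citep{goldstein2016structured}): one factors $\widetilde{\mathbf{x}}_i=\widetilde{q}_i\cdot\sqrt{d}\,\mathbf{x}_i/\|\mathbf{x}_i\|_2$, shows the direction is a genuine sub-Gaussian vector (Lemma~\ref{lem:normalization}) while the combined scalar multiplier $(\widetilde{y}_i-\langle\widetilde{\mathbf{x}}_i,\eta\theta_*\rangle)\widetilde{q}_i$ is bounded by $O(N^{1/(2(1+\epsilon))})$ with finite $(2+2\epsilon)$-moment, and obtains directly the mean-width rate $\beta\bigl(\omega(B_2(0,r)\cap B_\Psi(0,\rho))+r\bigr)/\sqrt{N}$ for $r_{\mathcal{M}}$ (Lemma~\ref{lem:bound-M-2}). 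The bias $r_{\mathcal{V}}$ is handled by a separate deterministic Cauchy--Schwarz/Markov computation using the Brillinger identity (Lemma~\ref{lem:bound-V-2}), and $\lambda$ is then fixed via $\lambda\asymp r(\rho)^2/\rho$ in Theorem~\ref{thm:master}, not via an $\ell_\infty$ gradient condition. Without the truncated multiplier lemma your argument does not close at the $(4+\epsilon)$-moment level.
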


When $\eta\theta_*$ is close to a rank $s$ matrix $\theta_0$, and $\Psi(\cdot) = \|\cdot\|_*$, the nuclear norm of a matrix, we have the following theorem:
\begin{theorem}\label{thm:low-rank}
Let $\delta = \frac12\sqrt{\frac{\kappa}{2}}$ and $Q=\frac{\kappa^2}{8\nu}$. 
Suppose Assumption \ref{assume:single-index} holds and the matrix $\eta\theta_*\in\mathbb{R}^{m\times n}$ satisfies 
$$\|\eta\theta_*-\theta_0\|_*\leq c(\nu,\kappa,\nu_q)\frac{s\sqrt{m+n}}{\sqrt{N}},$$
for some matrix $\theta_0$ such that
where $\text{rank}(\theta_0) = s$, then, under the condition that 
\[
N \geq c_0(\nu,\kappa,\nu_q)\l(\frac{\delta t + \beta\sqrt{s(m+n)}}{\delta Q}\r)^2+ \frac{4}{Q^2}\frac{\expect{\mu^2}^2\lambda_{\max}(\mf\Sigma)}{d^2},
\]
and $\lambda = c_1(\nu,\kappa,\nu_q)\frac{\beta\sqrt{m+n}}{\delta Q}$, where $C_1,C_2$ are some constants, 
\[
\l\| \widehat{\theta}_N - \eta\theta_* \r\|_2\leq C_0(\nu,\kappa,\nu_q)\frac{\beta\sqrt{s(m+n)}}{\delta Q},
~~
\l\| \widehat{\theta}_N - \eta\theta_* \r\|_*\leq C_1(\nu,\kappa,\nu_q)\frac{\beta s\sqrt{m+n}}{\delta Q},
\]
with probability at least $1-e^{-\beta}-e^{-t^2}$ for any $\beta,t>1$ where $C_0(\nu,\kappa,\nu_q),~C_1(\nu,\kappa,\nu_q), ~c(\nu,\kappa,\nu_q),~c_0(\nu,\kappa,\nu_q)$ and 
$c_1(\nu,\kappa,\nu_q)$ are all constants depending only on $\nu,\kappa,\nu_q$ in Assumption \ref{assumption:moment}.

\end{theorem}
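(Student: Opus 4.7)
The plan is to apply the general thresholding framework of Section \ref{sec:prelim-analysis} in direct parallel with the proof of Theorem \ref{thm:main-sparse-2}, replacing the $\ell_1$ geometry with that of the nuclear norm. The framework reduces the analysis to three ingredients: (i) a small-ball / restricted strong-convexity lower bound on $\frac{1}{N}\sum_i \langle \widetilde{\mathbf{x}}_i, h\rangle^2$ uniformly over the localized descent cone at $\eta\theta_*$; (ii) an upper bound on the multiplier process $\frac{1}{N}\sum_i \widetilde{y}_i \widetilde{\mathbf{x}}_i - \mathbb{E}[y\mathbf{x}]$ measured in the dual of the regularizer, here the spectral norm; and (iii) control of the approximation error reflecting that $\eta\theta_*$ is only close to, not equal to, a rank-$s$ matrix $\theta_0$.

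First I would invoke the elliptical small-ball estimate established earlier in the paper to obtain (i) with the stated parameters $\delta=\tfrac12\sqrt{\kappa/2}$ and $Q=\kappa^2/(8\nu)$, uniformly over the intersection of the descent cone of $\|\cdot\|_*$ at $\theta_0$ with the Frobenius unit sphere. The only dimension-dependent quantity entering the sample-size requirement is the squared Gaussian mean width of this intersection; for matrices in $\mathbb{R}^{m\times n}$ with rank at most $s$, standard arguments (e.g., \citep{chandrasekaran2012convex}) give
\[
\omega\bigl(\mathbb{S}^{mn-1}\cap D(\{\theta:\|\theta\|_*\le \|\theta_0\|_*\},\theta_0)\bigr) \lesssim \sqrt{s(m+n)},
\]
which produces the $\sqrt{s(m+n)}$ scaling in the hypothesis on $N$. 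The additive $\frac{4}{Q^2}\mathbb{E}[\mu^2]^2 \lambda_{\max}(\mathbf{\Sigma})/d^2$ in the sample-size condition appears, exactly as in the sparse case, as the bias of the radial truncation \eqref{eq:trunc2} of $\|\mathbf{x}_i\|_2/\sqrt{d}$, and is dimension-agnostic in its dependence on the regularizer.

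For ingredient (ii), I would bound the spectral norm of the truncated multiplier process. Because of the radial truncation in \eqref{eq:trunc2} and the truncation of $y$ at level $\tau = N^{2/(q+4)}$, each summand $\widetilde{y}_i\widetilde{\mathbf{x}}_i$ (viewed as an $m\times n$ matrix) is bounded, with matrix variance controlled through the $(4+\epsilon)$-moment assumption. A matrix Bernstein inequality then yields a spectral-norm deviation of order $\beta\sqrt{m+n}/\sqrt{N}$, which dictates the stated choice $\lambda \asymp \beta\sqrt{m+n}/(\delta Q\sqrt{N})$. Ingredient (iii) is handled by the standard decomposable-norm cone argument: writing $h=\widehat\theta_N-\eta\theta_*$ and splitting $h$ relative to the tangent space of the nuclear norm at $\theta_0$ (spanned by the left and right singular vectors of $\theta_0$), the approximation error $\|\eta\theta_*-\theta_0\|_*$ contaminates the basic inequality only by an additive $O(s\sqrt{(m+n)/N})$ term, thanks to the hypothesis on $\|\eta\theta_*-\theta_0\|_*$.

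Plugging (i)--(iii) into the deterministic estimator bound from the general framework produces the claimed Frobenius-norm rate; the nuclear-norm bound then follows from the cone inequality $\|h\|_*\lesssim \sqrt{s}\,\|h\|_2$ that holds on the restricted set. The main obstacle is the spectral-norm bound on the truncated multiplier process under only $(4+\epsilon)$ moments. This is where the elliptical structure is essential: the decomposition $\mathbf{x}_i=\mu_i \mathbf{B} U_i$ decouples the heavy-tailed scale $\mu_i$ from the bounded directional vector $U_i$, so radial truncation directly controls the operator-norm range of each summand while leaving the well-behaved angular part intact, making the matrix-Bernstein argument go through at the desired rate. Once that operator-norm deviation is secured, the remainder of the proof is an essentially mechanical transcription of the sparse argument with $s\log(ed/s)$ replaced by $s(m+n)$ and $\|\cdot\|_1$ replaced by $\|\cdot\|_*$.
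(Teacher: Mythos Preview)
Your overall architecture---following Theorem \ref{thm:master} with the three critical radii, replacing $\ell_1$ by $\|\cdot\|_*$, and handling the approximate-low-rank hypothesis via the decomposability of the nuclear-norm subdifferential---matches the paper's proof. The paper proceeds exactly by plugging the nuclear-norm analogue of Lemma \ref{lem:sparse-eq} (their Lemma on $\Delta(\eta\theta_*,\rho)\ge 3\rho/4$ when $\rho\ge 16r(\rho)\sqrt{s}$) into Lemma \ref{lem:bound-radiuses-2} and Theorem \ref{thm:master}, using the width bound $\omega(B_*(0,\rho)\cap B_2(0,r))\le C_0\min\{\rho\sqrt{m+n},\,r\sqrt{mn}\}$.

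The gap is in your ingredient (ii). Matrix Bernstein will not deliver the $\beta\sqrt{m+n}/\sqrt{N}$ spectral-norm rate under the truncation \eqref{eq:trunc2}. Only the \emph{radial} part of $\mathbf{x}_i$ is truncated, so the almost-sure operator-norm bound on $\widetilde{y}_i\widetilde{\mathbf{x}}_i$ (as an $m\times n$ matrix) is no better than $|\widetilde{y}_i|\,\|\widetilde{\mathbf{x}}_i\|_F\le \tau\cdot\sqrt{mn}\,\tau$; with $\tau=N^{2/(q+4)}$ and $q$ just above $4$, the range term $R\log(m+n)/N$ in Bernstein's inequality is of order $\sqrt{mn}\log(m+n)/N^{(1+\epsilon)/(2+\epsilon)}$, which is strictly worse than $\sqrt{(m+n)/N}$. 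The paper avoids this by \emph{not} bounding the dual norm at all: it bounds $r_{\mathcal{M}}$ directly via the truncated multiplier-process lemma (their Lemma \ref{lem:trunc-multi}, from \citep{goldstein2016structured}), which requires only that the \emph{direction} $\sqrt{d}\,\mathbf{x}_i/\|\mathbf{x}_i\|_2$ be sub-Gaussian (their Lemma \ref{lem:normalization}) and that the scalar multiplier $(\widetilde{y}_i-\langle\widetilde{\mathbf{x}}_i,\eta\theta_*\rangle)\bigl(\tfrac{\|\mathbf{x}_i\|_2}{\sqrt d}\wedge\tau\bigr)$ have a bounded $2(1+\epsilon)$-moment and be truncated at the level $N^{1/(2(1+\epsilon))}$. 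Generic chaining then yields $\sup_{\mathbf v\in B_*(0,\rho)\cap B_2(0,r)}|\mathcal P_N\mathcal M_{\mathbf v}|\lesssim \beta(\omega(B_*(0,\rho)\cap B_2(0,r))+r)/\sqrt N$, which is exactly what feeds into the critical-radius machinery. Your last paragraph correctly identifies the elliptical decoupling as the crux, but the right tool to exploit it is this multiplier-process chaining bound, not matrix Bernstein.
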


\section{A Unified Preliminary Analysis}\label{sec:prelim-analysis}
We start with the usual optimality analysis of \eqref{eq:rls}. Since $\widehat{\theta}_N$ minimizes the right hand side of \eqref{eq:rls}, we have
\[
\frac1N\sum_{i=1}^N\l(\dotp{\widetilde{\mathbf{x}}_i}{\widehat{\theta}_N} - \widetilde{y}_i \r)^2 + \lambda\Psi\l(\widehat{\theta}_N\r)
\leq \frac1N\sum_{i=1}^N\l(\dotp{\widetilde{\mathbf{x}}_i}{\eta\theta_*} - \widetilde{y}_i \r)^2 + \lambda\Psi(\eta\theta_*)
\]
Simple algebraic manipulations give
\begin{equation}\label{eq:opt1}
\frac{1}{N}\sum_{i=1}^N\dotp{\widetilde{\mathbf{x}}_i}{\widehat{\theta}_N-\eta\theta_*}^2 - 
\frac{2}{N}\sum_{i=1}^N\dotp{\widetilde{\mathbf{x}}_i}{\widehat{\theta}_N-\eta\theta_*}
\l(\widetilde{y}_i - \dotp{\widetilde{x}_i}{\eta\theta_*}\r) + \lambda\l( \Psi\l(\widehat{\theta}_N\r) - \Psi\l(\eta\theta_*\r)\r)\leq0.
\end{equation}
To simplify the notations, for any $\mathbf{v} \in\mathbb{R}^d$, define 
\begin{align*}
\mathcal{Q}_{\mathbf{v}}(\mf x)&:= \dotp{\wt{\mathbf{x}}}{\mathbf{v}}^2 \\
\mc{M}_{\mf v}(\mf x)&:=\l(\wt{y}-\dotp{\wt{\mathbf{x}}}{\eta\theta_*}\r)\dotp{\wt{\mf x}}{\mf{v}} - \expect{\l(\wt{y}-\dotp{\wt{\mathbf{x}}}{\eta\theta_*}\r)\dotp{\wt{\mf x}}{\mf{v}}}  \\
\mc{V}_{\mf v}& := \expect{\l(\wt{y} - \dotp{\wt {\mf x}}{\eta\theta_*}\r)\dotp{\wt{\mf{x}}}{\mf{v}}} 
\end{align*}
In addition, for any Borel measurable function $G:\mathbb{R}^d\rightarrow\mb{R}$, $\mc{P}_NG:=\frac{1}{N}\sum_{i=1}^NG(\mf{x}_i)$. Let \begin{equation}\label{eq:L}
\mathcal{L}^{\lambda}_{\mathbf{v}}(\mf x) := \mathcal{Q}_{\mathbf{v}}(\mf x) - 2\mc{M}_{\mf v}(\mf x) - 2 \mc{V}_{\mf v} + \lambda\l( \Psi\l(\eta\theta_*+\mf{v}\r) - \Psi\l(\eta\theta_*\r) \r)
\end{equation}
Having defined these notations, the criterion \eqref{eq:opt1} simply implies $\mc{P}_N\mathcal{L}^{\lambda}_{\widehat{\theta}_N-\eta\theta_*}\leq0$. Our goal is then to show that for any $\theta\in\mb{R}^d$ such that $\|\theta-\eta\theta_*\|_2>r$, where $r>0$ is a certain bounding radius, then, 
$$\mc{P}_N\mathcal{L}^{\lambda}_{\theta-\eta\theta_*} =  
\mc{P}_N\mathcal{Q}_{\theta-\eta\theta_*} - 2\mc{P}_N\mc{M}_{\theta-\eta\theta_*} - 2 \mc{V}_{\theta-\eta\theta_*} + \lambda\l( \Psi\l(\theta\r) - \Psi\l(\eta\theta_*\r) \r)> 0. $$
The intuition why one would expect this to happen is as follows. Suppose $\Psi(\cdot)$ is not a smooth function near $\eta\theta_*$ and the set of sub-differentials of the norm function $\Psi(\cdot)$ near $\eta\theta_*$ (which we denote as $\partial\Psi(\eta\theta_*)$) is ``large'', then, the set of descent directions i.e. 
$D_{\Psi}(\eta\theta_*):= \l\{ \theta\in\mathbb{R}^d:~\Psi(\theta)\leq \Psi(\eta\theta_*) \r\}$ would be relatively small.\footnote{The descent cone and the cone of sub-differentials are dual to each other.} This implies 
\begin{itemize}
\item For $\theta\in\mathbb{R}^d$ not in the descent directions, $\Psi(\theta)> \Psi(\eta\theta_*)$, and for an appropriate choice of $\lambda$, the possibly negative linear terms $- 2\mc{P}_N\mc{M}_{\theta-\eta\theta_*} - 2 \mc{V}_{\theta-\eta\theta_*}$ would be dominated by $\Psi(\theta)- \Psi(\eta\theta_*)$.
\item For the set of $\theta\in\mathbb{R}^d$ in the descent directions, we would expect the quadratic term $\mc{P}_N\mathcal{Q}_{\theta-\eta\theta_*}$ to dominate the linear terms $- 2\mc{P}_N\mc{M}_{\theta-\eta\theta_*} - 2 \mc{V}_{\theta-\eta\theta_*}$. For sufficiently small set of descent directions and proper choices of random measurement vectors, $\mc{P}_N\mathcal{Q}_{\theta-\eta\theta_*}$ would be a non-degenerated quadratic form over the set of descent directions (i.e. 
$\mc{P}_N\mathcal{Q}_{\theta-\eta\theta_*}\geq c\|\theta-\eta\theta_*\|_2^2$ for some constant $c>0$), which dominates the linear terms $2\mc{P}_N\mc{M}_{\theta-\eta\theta_*}$ and $2 \mc{V}_{\theta-\eta\theta_*}$ for all $\theta$ sufficiently away from $\eta\theta_*$.
\end{itemize}

Following the idea of \citep{lecue2016regularization-2, lecue2016regularization}, which offers a promising alternative to the usual RIP analysis, we concretize these two intuitions by considering the intersection of an $L_2$-ball $B_{2}(\eta\theta_*,r)$ and a $\Psi$-ball $B_\Psi(\eta\theta_*,\rho)$, with a properly chosen $\rho>0$, and we aim to show that if $\theta$ is outside of $B_{2}(\eta\theta_*,r)\cap B_\Psi(\eta\theta_*,\rho)$ with appropriate choices of $r$ and $\rho$, then,  
$\mc{P}_N\mathcal{L}^{\lambda}_{\theta-\eta\theta_*}>0$. As is shown in Fig. \ref{fig:geometry},  having this intersection essentially divides the space outside of $B_{2}(\eta\theta_*,r)\cap B_\Psi(\eta\theta_*,\rho)$ into two types of regions: 1. The region containing the set of descent directions $D_{\Psi}(\eta\theta_*)$, where the quadratic term $\mc{P}_N\mathcal{Q}_{\theta-\eta\theta_*}$ is expected to take effect. 2. The region where $\Psi(\theta)>\Psi(\eta\theta_*)$, and the term $\lambda(\Psi(\theta) - \Psi(\eta\theta_*))$ is expected to take effect.


Let $\Lambda_Q,~\Lambda_M$ and $\Lambda_{\mc{V}}$ be three positive constants.
For chosen $\rho>0$ and $p_{\mathcal{Q}},p_{\mathcal{M}}\in(0,1)$, 
we define three critical radiuses:
\begin{align*}
r_{\mathcal{Q}} :=& \inf\l\{ r>0: Pr\l( \inf_{\theta\in S_{2}(\eta\theta_*,r)\cap B_\Psi(\eta\theta_*,\rho)}\mc{P}_N\mathcal{Q}_{\theta-\eta\theta_*}
\geq \Lambda_Qr^2 \r)\geq1-p_{\mathcal{Q}} \r\},\\
r_{\mc{V}}:=& \inf\l\{ r>0:\sup_{\theta\in B_{2}(\eta\theta_*,r)\cap B_\Psi(\eta\theta_*,\rho)}\l| \mc{V}_{\theta-\eta\theta_*} \r| \leq \Lambda_{\mc{V}}r^2 \r\},\\
r_{M}:= &\inf\l\{ r>0:~Pr\l( \sup_{\theta\in B_{2}(\eta\theta_*,r)\cap B_\Psi(\eta\theta_*,\rho)}\l|\mc{P}_N\mc{M}_{\theta-\eta\theta_*}  \r|  
\leq \Lambda_M r^2\r)\geq 1-p_{\mathcal{M}} \r\},
\end{align*}

We then set 
$$r(\rho) := \max\l\{ r_{\mathcal{Q}}, r_{\mathcal{M}}, r_{\mc{V}} \r\}.$$
Define the set of sub-differentials of the norm function $\Psi(\cdot)$ near $\eta\theta_*$ (i.e. within $\Psi$-radius of $\rho/16$) as
\begin{equation}\label{eq:sub-diff}
\Gamma_\Psi(\eta\theta_*,\rho) := \l\{ \mf{z}\in\mathbb{R}^d: \Psi(\mf u+\Delta \mf u) - \Psi(\mf u)\geq \dotp{\mf{z}}{\Delta\mf{u}} ,~~ \exists \mf u\in B_{\Psi}\l(\eta\theta_*,\frac{\rho}{16}\r),~\forall \Delta \mf u \in\mathbb{R}^d \r\}.
\end{equation}
Then, the set $\Gamma_\Psi(\eta\theta_*,\rho)$ being ``large'' is characterized by the following quantity:
\[
\Delta(\eta\theta_*,\rho) := \inf_{\theta\in B_{2}(\eta\theta_*,r)\cap S_\Psi(\eta\theta_*,\rho)}
~\sup_{\mf{z}\in\Gamma_\Psi(\eta\theta_*,\rho)}\dotp{\mf z}{\theta-\eta\theta_*}
\]
This key concept is first introduced in the works \citep{lecue2016regularization-2, lecue2016regularization}.
It characterizes the minimum amount of increase of the norm function $\Psi(\cdot)$ from $\Psi(\eta\theta_*)$ on the boundary of region II in Fig. \ref{fig:geometry}, and the set of sub-differentials $\Gamma_\Psi(\eta\theta_*,\rho)$ being ``large'' means for any $\theta\in B_{2}(\eta\theta_*,r)\cap S_\Psi(\eta\theta_*,\rho)$, there exists a vector in $\Gamma_\Psi(\eta\theta_*,\rho)$ which is close to the sub-differential of $\theta-\eta\theta_*$.

Our goal is to show that when $\theta\not\in B_{2}(\eta\theta_*,r(\rho))\cap B_\Psi(\eta\theta_*,\rho)$ and $\Delta(\eta\theta_*,\rho)$ is comparable to $\rho$, then, one has $\mc{P}_N\mathcal{L}^{\lambda}_{\theta-\eta\theta_*}>0$, as is shown in the following theorem.

\begin{theorem}\label{thm:master}
Suppose  there exists $\rho>0$ and $c_2\frac{r(\rho)^2}{\rho}\leq\lambda\leq c_1\frac{r(\rho)^2}{\rho}$ for some constant $c_1,c_2$, such that 
\begin{enumerate}
\item
$\Lambda_Q> 2(\Lambda_M+\Lambda_{\mc{V}})+c_1$ and $c_2\geq4(\Lambda_M+\Lambda_{\mc{V}})$. 
\item $\Delta(\eta\theta_*,\rho)\geq\frac34\rho$.
\end{enumerate}
Then, for any $\theta\not\in B_{2}(\eta\theta_*,r(\rho))\cap B_\Psi(\eta\theta_*,\rho)$, $\mc{P}_N\mathcal{L}^{\lambda}_{\theta-\eta\theta_*}>0$ with probability at least $1-p_{\mathcal{Q}}-p_{\mathcal{M}}$.
\end{theorem}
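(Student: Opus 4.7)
The plan is to exploit the convexity of $\mf v\mapsto \mc P_N \mc L^{\lambda}_{\mf v}$ to reduce positivity on all of $\mathbb{R}^d\setminus K$ to positivity on the boundary of $K$, where $K:=B_2(0,r(\rho))\cap B_{\Psi}(0,\rho)$ and $\mf v=\theta-\eta\theta_*$. Note that $\mc Q_{\mf v}(\mf x)=\dotp{\wt{\mf x}}{\mf v}^2$ is convex in $\mf v$, $\mc M_{\mf v}$ and $\mc V_{\mf v}$ are affine in $\mf v$, and $\mf v\mapsto \Psi(\eta\theta_*+\mf v)-\Psi(\eta\theta_*)$ is convex; hence $\mf v\mapsto \mc L^{\lambda}_{\mf v}(\mf x)$ is convex and $\mc L^{\lambda}_0=0$. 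For any $\mf v\notin K$ the segment $[0,\mf v]$ exits $K$ at some point $t\mf v$ with $t\in(0,1]$, and convexity together with $\mc L^{\lambda}_0=0$ yields $\mc P_N \mc L^{\lambda}_{t\mf v}\leq t\,\mc P_N \mc L^{\lambda}_{\mf v}$, so it suffices to prove
\begin{equation*}
\mc P_N\mc L^{\lambda}_{\mf v}>0 \qquad \text{on}\qquad \partial K = \l(S_2(0,r(\rho))\cap B_{\Psi}(0,\rho)\r)\cup \l(B_2(0,r(\rho))\cap S_{\Psi}(0,\rho)\r),
\end{equation*}
which we treat piece by piece using the three critical radii and Conditions~1--2.

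On the first piece $\|\mf v\|_2=r(\rho)$, $\Psi(\mf v)\leq\rho$, the definitions of $r_{\mc Q}$, $r_M$, and $r_{\mc V}$ give, on an event of probability at least $1-p_{\mc Q}-p_{\mc M}$, the simultaneous bounds $\mc P_N\mc Q_{\mf v}\geq\Lambda_Q r(\rho)^2$, $|\mc P_N\mc M_{\mf v}|\leq\Lambda_M r(\rho)^2$, and $|\mc V_{\mf v}|\leq \Lambda_{\mc V} r(\rho)^2$. Combined with the reverse triangle inequality $\Psi(\eta\theta_*+\mf v)-\Psi(\eta\theta_*)\geq -\Psi(\mf v)\geq -\rho$ and $\lambda\leq c_1 r(\rho)^2/\rho$, this yields $\mc P_N\mc L^{\lambda}_{\mf v}\geq(\Lambda_Q-2\Lambda_M-2\Lambda_{\mc V}-c_1)r(\rho)^2>0$ by Condition~1. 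On the second piece $\|\mf v\|_2\leq r(\rho)$, $\Psi(\mf v)=\rho$, we discard the nonnegative term $\mc P_N\mc Q_{\mf v}\geq 0$, keep the same bounds on $\mc P_N\mc M_{\mf v}$ and $\mc V_{\mf v}$, and lower bound the penalty increment through a sub-gradient argument: for any $\mf z\in\Gamma_{\Psi}(\eta\theta_*,\rho)$ anchored at some $\mf u\in B_{\Psi}(\eta\theta_*,\rho/16)$, the sub-gradient inequality at $\mf u$ gives $\Psi(\eta\theta_*+\mf v)\geq \Psi(\mf u)+\dotp{\mf z}{\eta\theta_*+\mf v-\mf u}$, and since sub-gradients of a norm lie in the dual unit ball ($\|\mf z\|_{\Psi^*}\leq 1$), both $|\Psi(\mf u)-\Psi(\eta\theta_*)|$ and $|\dotp{\mf z}{\eta\theta_*-\mf u}|$ are at most $\rho/16$. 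Taking the supremum over $\mf z\in\Gamma_{\Psi}(\eta\theta_*,\rho)$ and using Condition~2,
\begin{equation*}
\Psi(\eta\theta_*+\mf v)-\Psi(\eta\theta_*) \;\geq\; \Delta(\eta\theta_*,\rho)-\rho/8 \;\geq\; 3\rho/4-\rho/8 \;=\; 5\rho/8,
\end{equation*}
and with $\lambda\geq c_2 r(\rho)^2/\rho$ this gives $\mc P_N\mc L^{\lambda}_{\mf v}\geq \l(\tfrac{5}{8}c_2-2\Lambda_M-2\Lambda_{\mc V}\r)r(\rho)^2>0$ whenever $c_2\geq 4(\Lambda_M+\Lambda_{\mc V})$.

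The main obstacle is extracting the penalty lower bound on the $\Psi$-sphere piece. The anchor $\mf u$ associated to $\mf z\in\Gamma_{\Psi}(\eta\theta_*,\rho)$ need not coincide with $\eta\theta_*$, so the sub-gradient inequality at $\mf u$ does not directly compare $\Psi(\eta\theta_*+\mf v)$ with $\Psi(\eta\theta_*)$, and one must pay for moving the anchor. The slack of radius $\rho/16$ built into the definition of $\Gamma_{\Psi}$ is calibrated to exactly absorb the two $\rho/16$ losses induced by this anchor drift, leaving the strict margin $5\rho/8$ that is needed for $\lambda\cdot 5\rho/8$ to dominate the linear error terms $2(\Lambda_M+\Lambda_{\mc V})r(\rho)^2$ under the quantitative relation between $c_2$ and $\Lambda_M+\Lambda_{\mc V}$ in Condition~1. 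Everything else (the convex reduction to the boundary, the quadratic lower bound on the $L_2$-sphere piece, and the union bound combining the $r_\mc{Q}$ and $r_M$ events) is essentially bookkeeping once the critical radii are defined.
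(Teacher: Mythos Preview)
Your proof is correct and reaches the same conclusion through the same core computations as the paper: the quadratic/multiplier/bias bounds on the first boundary piece, and the sub-gradient argument absorbing the $\rho/16$ anchor drift on the second piece are identical to the paper's Cases~1 and~2.

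The packaging is somewhat different and arguably cleaner. The paper does not invoke convexity of $\mf v\mapsto \mc P_N\mc L^\lambda_{\mf v}$; instead it treats three regions separately --- $\{\|\mf v\|_2>r(\rho),\ \Psi(\mf v)\leq\rho\}$, $\{\|\mf v\|_2\leq r(\rho),\ \Psi(\mf v)>\rho\}$, and $\{\|\mf v\|_2>r(\rho),\ \Psi(\mf v)>\rho\}$ --- and in each one rescales $\mf v$ explicitly (by $r(\rho)/\|\mf v\|_2$ or $\rho/\Psi(\mf v)$) using the quadratic homogeneity of $\mc Q$ and the linearity of $\mc M,\mc V$ to land on the relevant boundary piece. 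The third case is then split again according to the ratio $\|\mf v\|_2/\Psi(\mf v)$ and fed back into the first two. Your single observation that $\mc P_N\mc L^\lambda_{\mf v}$ is convex with $\mc P_N\mc L^\lambda_0=0$ collapses all of this into one step: positivity on $\partial K$ propagates outward along rays. This buys a shorter proof and eliminates the somewhat ad hoc case split in the paper's Part~3, at no cost --- the convexity is immediate from the definitions and no additional hypotheses are needed.
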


\subsection{Sparse recovery with heavy-tailed measurements}\label{sec:sparse-recovery}
We have the following bounds on the critical radiuses for the case of sparse recovery.
\begin{lemma}\label{lem:bound-on-radiuses}
Suppose $N\geq C_0\l( \frac{s_0}{Q^2} + \frac{\nu+1}{\nu} \r)\beta^2\log(ed)+ \frac{\nu}{Q}s_0\log(ed)+cs\log(ed)$ for some absolute constants $C,c>1$, $s_0 = \frac{c_0\sqrt{\nu}}{\delta^2Q}s\leq d$ for some absolute constant $c_0>0$ and Assumption \ref{assume:sparse-recovery} holds, then,
\begin{align*}
r_{\mathcal{Q}}\leq&\sqrt{\frac{2}{c_0s}}\rho,~~~~r_{\mathcal{V}}\leq 8\l(\nu_q^2+\nu_q^4\r)  \l( \frac{\rho}{\delta^2Q} \r)^{1/2}\l( \frac{\log(ed)}{N} \r)^{1/4},\\
r_{\mathcal{M}}\leq& C(\nu_q,\xi)\l( \frac{wu^2v+w\beta^{3/4}}{\delta^2Q}\sqrt{\frac{s\log(ed)}{N}} + \sqrt{\rho\frac{wu^2v+w\beta^{3/4}}{\delta^2Q}}\l(\frac{s\log(ed)}{N}\r)^{1/4} \r),
\end{align*}
when taking $p_{\mathcal{Q}} = c'e^{-\beta}$ for some absolute constant $c'>0$ and
\begin{multline*}
p_{\mathcal{M}} = 2e^{-\beta}+2e^{-v^2}
+c'\l((u^{-q/4}+u^{-q'})(ed)^{-(c-1)}\r.\\
\l.+(eN)^{-\frac{q}{12}+1}(\log(eN))^{q/6}w^{-q/6} +  (eN)^{-\frac{q'}{4}+1}(\log(eN))^{q'/2}w^{-q'}\r),
\end{multline*}
in the definitions of $r_{\mathcal{Q}}$ and $r_{\mathcal{M}}$ for $\beta,u,v,w>6$, where $C(\nu_q,\xi) := C'\l(\nu_q^3+\nu_q^{5/2}+\nu_q^{3/2} + \|\xi\|_{L_{q'}}(\nu_q+1)\r)$, for some absolute constant $C>0$.
\end{lemma}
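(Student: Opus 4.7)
The three critical radii require fundamentally different techniques: $r_{\mathcal V}$ is a purely deterministic bias produced by truncation, $r_{\mathcal Q}$ is a uniform lower bound on a random quadratic form which I will handle by Mendelson's small-ball method, and $r_{\mathcal M}$ is a uniform upper bound on a centered multiplier process which I will control by symmetrization followed by nested truncation and Bousquet's inequality. The common geometric ingredient is that every supremum is taken over the set $B_\Psi(\eta\theta_*,\rho)\cap B_2(\eta\theta_*,r)$, whose normalized Rademacher/Gaussian complexity is of order $\sqrt{s_0\log(ed)/N}$ with $s_0\asymp(\rho/r)^2$ in the sparse regime.

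I would begin with $r_{\mathcal V}$. Since $\xi_i$ is independent of $\mathbf{x}_i$ with $\mathbb{E}\mathbf{x}_i=0$, the un-truncated analogue $\mathbb{E}[\xi_i\dotp{\mathbf{x}_i}{\theta-\eta\theta_*}]$ vanishes, so $\mathcal V_{\theta-\eta\theta_*}$ must come entirely from truncation error. Decomposing
\[
\widetilde y_i - \dotp{\widetilde{\mathbf{x}}_i}{\eta\theta_*} = \xi_i + (\widetilde y_i - y_i) + \dotp{\mathbf{x}_i-\widetilde{\mathbf{x}}_i}{\eta\theta_*},
\]
the three resulting bias terms can each be controlled via Markov (to bound $\mathbb{P}(|y_i|>\tau)$ or $\mathbb{P}(|x_{ij}|>\tau)$) combined with H\"older, using the $L_q$ moment bounds of Assumption \ref{assume:sparse-recovery} and the constraint $\|\theta-\eta\theta_*\|_1\leq\rho$ available on $B_\Psi(\eta\theta_*,\rho)$. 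With $\tau=(N/\log(ed))^{1/4}$ and $q>20$, this delivers $|\mathcal V_{\theta-\eta\theta_*}|\lesssim\rho\sqrt{\log(ed)/N}$; matching $\Lambda_{\mathcal V}r^2\geq|\mathcal V_{\theta-\eta\theta_*}|$ yields the stated bound.

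For $r_{\mathcal Q}$ I would apply the small-ball method. Paley--Zygmund combined with the non-degeneracy and bounded-kurtosis parts of Assumption \ref{assumption:moment} gives $\mathbb{P}(|\dotp{\mathbf{x}}{\mathbf{v}}|\geq\delta\|\mathbf{v}\|_2)\geq Q$ with $\delta=\tfrac{1}{2}\sqrt{\kappa/2}$ and $Q=\kappa^2/(8\nu)$, and since $\tau$ is large this transfers to $\widetilde{\mathbf{x}}$ at the cost of an absolute constant. The pointwise inequality
\[
\frac{1}{N}\sum_{i=1}^N\dotp{\widetilde{\mathbf{x}}_i}{\mathbf{v}}^2 \;\geq\; \delta^2\|\mathbf{v}\|_2^2\cdot\frac{1}{N}\sum_{i=1}^N \mathbf{1}\bigl\{|\dotp{\widetilde{\mathbf{x}}_i}{\mathbf{v}}|\geq\delta\|\mathbf{v}\|_2\bigr\}
\]
reduces the task to showing that the indicator average is at least $Q/2$ uniformly on $S_2(\eta\theta_*,r)\cap B_\Psi(\eta\theta_*,\rho)$. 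Replacing the indicator by a $1$-Lipschitz surrogate and applying Gin\'e--Zinn symmetrization followed by Talagrand's contraction principle reduce this to controlling the Rademacher complexity of $B_\Psi(0,\rho)\cap S_2(0,r)$, which is of order $r\sqrt{s_0\log(ed)/N}$ for $s_0=(\rho/r)^2$. The choice $s_0=c_0\sqrt{\nu}s/(\delta^2 Q)$ in the statement balances this against $Q/2$ and forces $r_{\mathcal Q}\leq\sqrt{2/(c_0 s)}\rho$.

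The main obstacle will be $r_{\mathcal M}$, since the multipliers $\widetilde y_i-\dotp{\widetilde{\mathbf{x}}_i}{\eta\theta_*}$ are heavy-tailed through $\xi_i$ (only $q'>5$ moments) while $\widetilde{\mathbf{x}}_i$ is only componentwise bounded. After Gin\'e--Zinn symmetrization I would split the multiplier via the decomposition above and perform a second, parameter-dependent truncation: the free parameter $w$ truncates $\xi_i$ at a Fuk--Nagaev level, $u$ truncates $y_i$ at the intermediate level needed to control the Bernstein variance of the already-truncated pieces, and $v$ parametrizes the Gaussian deviation in Bousquet's inequality. On each bounded piece the Rademacher complexity is once more bounded by that of $B_\Psi(0,\rho)\cap B_2(0,r)$, producing via Cauchy--Schwarz the main contribution of order $r\sqrt{s\log(ed)/N}$ and, after a peeling step, the second summand $\sqrt{\rho r}\,(s\log(ed)/N)^{1/4}$. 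Residuals of the outer truncations enter the probability bound through Markov and account for the terms $(eN)^{-q/12}$, $(eN)^{-q'/4}$ and $(ed)^{-(c-1)}$. The bulk of the work will be coordinating these simultaneous truncation scales against the moment constants $\nu_q$ and $\|\xi\|_{L_{q'}}$ while preserving the claimed dependence in the prefactor $C(\nu_q,\xi)$.
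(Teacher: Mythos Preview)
Your treatment of $r_{\mathcal V}$ is correct and coincides with the paper's argument.

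For $r_{\mathcal Q}$ there is a genuine gap. You assert that the small-ball condition for $\mathbf{x}$ ``transfers to $\widetilde{\mathbf{x}}$ at the cost of an absolute constant'', and then run Mendelson's argument on $S_2(0,r)\cap B_1(0,\rho)$. With the coordinate-wise truncation \eqref{eq:trunc1} this transfer fails for non-sparse directions: $\dotp{\widetilde{\mathbf{x}}-\mathbf{x}}{\mathbf v}$ is nonzero as soon as \emph{any} coordinate of $\mathbf{x}$ in the support of $\mathbf v$ exceeds $\tau$, and for a generic $\mathbf v$ this happens with probability of order $d\,\nu/\tau^4 = d\,\nu\log(ed)/N$, which is not small when $N\asymp s\log d\ll d$. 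The paper therefore proves the truncated small-ball condition \emph{only} on $s_0$-sparse vectors (Lemma~\ref{lem:weak-small-ball}), obtains uniformity over $\mathcal G_{s_0}\cap\mathbb S^{d-1}$ via a VC-dimension argument (Lemma~\ref{lem:VC}), and then extends to the full set $B_1(0,\rho)\cap S_2(0,r)$ through the Maurey-type quadratic-form lemma of \citep{lecue2017sparse} (Lemma~\ref{lem:quad-form}), combined with the per-column bound $\max_j\|\widetilde{\mathbf\Gamma}\mathbf e_j\|_2^2\lesssim\sqrt\nu$ (Lemma~\ref{lem:upper-1}). The Mendelson/Lipschitz-surrogate route you outline is precisely what the paper uses in the \emph{elliptical} case (Corollary~\ref{lem:bound-Q-2}), where the truncation \eqref{eq:trunc2} acts only on the radial part and the small-ball transfers uniformly (Lemma~\ref{lem:small-ball-2}); with \eqref{eq:trunc1} it does not, and the VC+Maurey mechanism is the missing idea.

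For $r_{\mathcal M}$ your plan is a genuinely different route. The paper does not use Bousquet's inequality or a Fuk--Nagaev truncation; after symmetrization it reduces the supremum to $\max_j|z_j|\cdot(r\sqrt m+\rho)$ via \eqref{eq:master-bound-0}, and then bounds each Rademacher sum $\sum_i\varepsilon_i a_i\widetilde x_{ij}$ by the Montgomery--Smith tail inequality followed by H\"older and order-statistics estimates on $(\widetilde\phi_i^\sharp)$, $(\widetilde x_{ij}^\sharp)$, $(\xi_i^\sharp)$ (Lemmas~\ref{lem:supp-11}--\ref{lem:supp-14} and \ref{lem:xi-1}--\ref{lem:xi-2}). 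The precise prefactor $wu^2v+w\beta^{3/4}$ and the exact failure-probability exponents in the statement (the $-q/12+1$ and $-q'/4+1$ powers of $eN$) are artifacts of those order-statistics lemmas; a Bousquet-based scheme would produce different constants and different tails, so even if your approach yields \emph{a} bound on $r_{\mathcal M}$, it will not reproduce the one stated here.
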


The above lemma is the combination of Lemma \ref{lem:bound-Q},~\ref{lem:bound-M},~\ref{lem:bound-V} proved in the appendix.  The bound on  $r_{\mathcal{Q}}$ relies on a new truncated small-ball argument in conjunction with a bookkeeping VC argument. The bound on $r_{\mathcal{M}}$ relies on a new analysis on the truncated multiplier process (Lemma \ref{lem:bound-PM}) leveraging the fact that the bias is small if we truncate 
$(\mathbf{x}_i,y_i)$ at a high enough level (namely, at level $\tau = (N/\log(ed))^{1/4}$) with enough moments ($q>20$) assumed. 

Recall that the final radius bound  $r(\rho) := \max\l\{ r_{\mathcal{Q}}, r_{\mathcal{M}}, r_{\mc{V}} \r\}$. Thus, $r(\rho)$ is bounded above by the maximum of the bounds in Lemma \ref{lem:bound-on-radiuses}.
In view of Theorem \ref{thm:master}, we need to check if $\Delta(\eta\theta_*,\rho)\geq\frac34\rho$ holds. This is done via the following 
characterization of the set of sub-differentials whose proof is fairly standard and delayed to the appendix.
\begin{lemma}\label{lem:sparse-eq}
Suppose $\|\eta\theta_* - \theta_0\|_1\leq\rho/16$, where $\theta_0$ an $s$-sparse vector and $\rho\geq 8r(\rho)\sqrt{s}$, then, $\Delta(\eta\theta_*,\rho)\geq 3\rho/4$.
\end{lemma}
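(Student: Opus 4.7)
The plan is to construct, for each admissible $\theta$, an explicit element $\mf{z}\in\Gamma_\Psi(\eta\theta_*,\rho)$ realizing the required lower bound $\dotp{\mf{z}}{\theta-\eta\theta_*}\geq 3\rho/4$. The hypothesis $\|\eta\theta_*-\theta_0\|_1\leq \rho/16$ places the $s$-sparse anchor $\theta_0$ inside $B_\Psi(\eta\theta_*,\rho/16)$, so every element of the sub-differential $\partial\|\cdot\|_1(\theta_0)$ lies in $\Gamma_\Psi(\eta\theta_*,\rho)$ according to definition \eqref{eq:sub-diff}. This is the only place the hypothesis on $\theta_0$ is used; once this identification is made, the argument reduces to the textbook compressed-sensing decomposition on the support $S:=\supp(\theta_0)$ with $|S|=s$.

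Concretely, fix $\theta\in B_2(\eta\theta_*, r(\rho))\cap S_\Psi(\eta\theta_*,\rho)$ and write $h=\theta-\eta\theta_*$, so that $\|h\|_1=\rho$ and $\|h\|_2\leq r(\rho)$. I would define $\mf{z}\in\mathbb{R}^d$ by
\bels
z_i = \sign(\theta_{0,i}) \quad\text{for } i\in S, \qquad z_i = \sign(h_i) \quad\text{for } i\in S^c.
\enls
Since $|z_i|\leq 1$ everywhere and $z_i=\sign(\theta_{0,i})$ on $S$, $\mf{z}$ belongs to $\partial\|\cdot\|_1(\theta_0)$, hence to $\Gamma_\Psi(\eta\theta_*,\rho)$. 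Splitting the inner product into its $S$ and $S^c$ parts,
\bels
\dotp{\mf{z}}{h} = \sum_{i\in S}\sign(\theta_{0,i})\,h_i + \sum_{i\in S^c}|h_i| \geq -\|h_S\|_1 + \|h_{S^c}\|_1 = \rho - 2\|h_S\|_1,
\enls
where the last equality uses $\|h\|_1=\rho$.

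It then remains to control $\|h_S\|_1$. By Cauchy--Schwarz and the hypothesis $\|h\|_2\leq r(\rho)$,
\bels
\|h_S\|_1 \leq \sqrt{s}\,\|h_S\|_2 \leq \sqrt{s}\,\|h\|_2 \leq \sqrt{s}\,r(\rho).
\enls
Under the standing assumption $\rho\geq 8\sqrt{s}\,r(\rho)$, this yields $2\|h_S\|_1\leq \rho/4$, and combining with the previous display gives $\dotp{\mf{z}}{h}\geq 3\rho/4$. Taking the supremum over $\Gamma_\Psi(\eta\theta_*,\rho)$ and then the infimum over $\theta\in B_2(\eta\theta_*,r(\rho))\cap S_\Psi(\eta\theta_*,\rho)$ delivers $\Delta(\eta\theta_*,\rho)\geq 3\rho/4$.

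There is no serious obstacle: the argument is a variant of the standard $\ell_1$-cone computation with the single twist that one must base the sub-gradient at the sparse proxy $\theta_0$ rather than at $\eta\theta_*$ itself (which need not be sparse). The role of the hypothesis $\|\eta\theta_*-\theta_0\|_1\leq \rho/16$ is precisely to guarantee $\theta_0\in B_\Psi(\eta\theta_*,\rho/16)$ so that this base-point substitution is legal inside the definition of $\Gamma_\Psi$; the factor $1/16$ is generous, and only the qualitative inclusion is used.
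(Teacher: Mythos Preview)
Your proof is correct and follows essentially the same approach as the paper: choose a subgradient $\mf z$ of $\|\cdot\|_1$ at the sparse proxy $\theta_0$ (legitimate since $\theta_0\in B_\Psi(\eta\theta_*,\rho/16)$), split $h=\theta-\eta\theta_*$ along $S=\supp(\theta_0)$ and $S^c$, obtain $\dotp{\mf z}{h}\geq \rho-2\|h_S\|_1$, and then bound $\|h_S\|_1\leq\sqrt{s}\,r(\rho)\leq\rho/8$. The only cosmetic difference is that you write out the coordinates of $\mf z$ explicitly, whereas the paper characterizes $\mf z^*$ by the two conditions $\dotp{\mf z^*}{\theta_0}=\|\theta_0\|_1$ and $\dotp{\mf z^*}{\mc P_{S^c}h}=\|\mc P_{S^c}h\|_1$; these describe the same vector.
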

\begin{proof}[Proof of Theorem \ref{thm:sparse-recovery}]
By Lemma \ref{lem:bound-on-radiuses} we have, with probability at least $1-p_{\mathcal{Q}}-p_{\mathcal{M}}$,
\begin{multline*}
r(\rho)\leq  \sqrt{\frac{2}{c_0s}}\rho + C(\nu_q,\nu,\xi)\l( \frac{wu^2v+w\beta^{3/4}}{\delta^2Q}\sqrt{\frac{s\log(ed)}{N}} + \sqrt{\rho\frac{wu^2v+w\beta^{3/4}}{\delta^2Q}}\l(\frac{s\log(ed)}{N}\r)^{1/4} \r) \\
+ 8\l(\nu_q^2+\nu_q^4\r)\l( \frac{\rho}{\delta^2Q} \r)^{1/2}\l( \frac{\log(ed)}{N} \r)^{1/4},
\end{multline*}
Thus, by Lemma \ref{lem:sparse-eq}, the sparsity condition $\Delta(\eta\theta_*,\rho)\geq 3\rho/4$ is satisfied for any $\rho\geq 8r(\rho)\sqrt{s}$, 
which implies $\Delta(\eta\theta_*,\rho)\geq 3\rho/4$ for any 
\[
\rho\geq  \overline C_2(\nu_q,\nu,\xi) \frac{wu^2v+w\beta^{3/4}}{\delta^2Q}s\sqrt{\frac{\log(ed)}{N}},
\]
where $\overline C_2(\nu_q,\nu,\xi)= C_2\l(\nu_q^3+\nu_q^{5/2}+\nu_q^{3/2} + \|\xi\|_{L_{q'}}(\nu_q+1) + \nu_q^2+\nu_q^4\r)^2$ for some absolute constant 
$C_2>0$. One can take the equality in the above bound and it follows,
\[
r(\rho)\leq  \overline C_1(\nu_q,\nu,\xi) \frac{wu^2v+w\beta^{3/4}}{\delta^2Q}\sqrt{\frac{s\log(ed)}{N}}.
\]
Taking the equality in the above bound and the claim follows from setting $\Lambda_Q := \delta^2Q/4$, $\Lambda_M:= \delta^2Q/64$, $\Lambda_{\mc V}:=  \delta^2Q/64$,
and 
\[
\lambda = C\frac{r(\rho)^2}{\rho} = \overline C_0(\nu_q,\nu,\xi)\frac{wu^2v+w\beta^{3/4}}{\delta^2Q}\sqrt{\frac{\log(ed)}{N}},
\]
in Theorem \ref{thm:master}.
\end{proof}

\subsection{Single-index model with heavy-tailed elliptical measurements}\label{sec:single-index}
We sketch the proof of Theorem \ref{thm:main-sparse-2}. The proof of Theorem \ref{thm:low-rank} is similar and given in the appendix.
\begin{lemma}\label{lem:bound-radiuses-2}
Define $\Omega_{\mathcal{Q}}:=\l\{r>0:  N\geq\frac{4}{Q^2}\frac{\expect{\mu^2}^2\lambda_{\max}(\mf{\Sigma})}{d^2} + \omega(S_{2}(0,r)\cap B_\Psi(0,\rho) + r)^2\r\}$ and $\Omega_{\mathcal{M}}:=\l\{r>0: N\geq (\omega(S_{2}(0,r)\cap B_\Psi(0,\rho)) + r)^2\r\} $, then, by taking $p_{\mathcal{Q}}=ce^{-t^2}$ for some absolute constant $c>0$ and $p_{\mathcal{M}} = e^{-\beta}$, we have
\begin{align*}
r_{\mathcal{Q}}\leq& \inf\l\{ r\in\Omega_{\mathcal{Q}}: \l(\frac{\delta Q}{2} - \frac{\delta t + C(\nu_q,\kappa)}{\sqrt{N}}\r)r\geq C(\nu_q,\kappa)\frac{\omega(S_{2}(0,r)\cap B_\Psi(0,\rho))}{\sqrt{N}} \r\},\\
r_{\mathcal{M}}\leq& \inf\l\{ r\in\Omega_{\mathcal{M}}:  C'(\nu,\kappa,\nu_q)\beta\frac{\omega(S_{2}(0,r)\cap B_\Psi(0,\rho)) + r}{\sqrt{N}}\leq \frac{\delta^2Q^2}{64}r^2 \r\},\\
r_{\mc V}\leq& \frac{64C''(\nu,\kappa,\nu_q)}{\delta^2Q^2\sqrt N},
\end{align*}
where $C(\nu_q,\kappa)$, $C'(\nu,\kappa,\nu_q)$, $C''(\nu,\kappa,\nu_q)$ are constants depending only on $\nu,\kappa,\nu_q$ in Assumption \ref{assumption:moment}.
\end{lemma}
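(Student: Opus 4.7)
The plan is to bound the three critical radiuses separately, exploiting the fact that for an elliptical vector $\mathbf{x}\stackrel{d}{=}\mu \mathbf{B} U$ the truncation in \eqref{eq:trunc2} only touches the scalar factor $\mu\|\mathbf{B}U\|_2/\sqrt{d}$ and leaves the direction $\mathbf{B}U/\|\mathbf{B}U\|_2$ untouched.

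\textbf{Bounding $r_{\mathcal{Q}}$.} I would run the small-ball method of Mendelson in its empirical form. For any $\mathbf{v}$ with $\|\mathbf{v}\|_2=r$, the elementary inequality
\[
\langle \widetilde{\mathbf{x}},\mathbf{v}\rangle^2 \;\geq\; \delta^2 r^2 \,\mathbb{1}\{|\langle \widetilde{\mathbf{x}},\mathbf{v}\rangle|\geq \delta r\}
\]
reduces the quadratic lower bound to lower bounding an indicator process. First I would transfer the small-ball property from $\mathbf{x}$ to $\widetilde{\mathbf{x}}$: applying Paley--Zygmund to $\langle \mathbf{x},\mathbf{v}\rangle^2$ with the bounded fourth moment $\nu$ and the non-degeneracy $\kappa$ gives $\Pr(|\langle \mathbf{x},\mathbf{v}\rangle|\geq 2\delta r)\geq 2Q$, and the hypothesis $N\geq (4/Q^2)\mathbb{E}[\mu^2]^2\lambda_{\max}(\mathbf{\Sigma})/d^2$ (through $\tau=N^{2/(q+4)}$) is tight enough to guarantee $\Pr(\widetilde{\mathbf{x}}\neq \mathbf{x})\leq Q$, so $\Pr(|\langle \widetilde{\mathbf{x}},\mathbf{v}\rangle|\geq \delta r)\geq Q$. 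Then symmetrization and a bounded-difference / Talagrand concentration applied to the indicator class $\{\mathbf{x}\mapsto \mathbb{1}\{|\langle \widetilde{\mathbf{x}},\mathbf{v}\rangle|\geq \delta r\}:\mathbf{v}\in S_2(0,r)\cap B_\Psi(0,\rho)\}$ produces
\[
\inf_{\mathbf{v}\in S_2(0,r)\cap B_\Psi(0,\rho)}\mathcal{P}_N \mathbb{1}\{|\langle \widetilde{\mathbf{x}},\mathbf{v}\rangle|\geq \delta r\}\;\geq\; Q - \frac{C(\nu_q,\kappa)}{r\sqrt{N}}\bigl(\omega(S_2(0,r)\cap B_\Psi(0,\rho)) + r\bigr) - \frac{\delta t}{r\sqrt{N}}
\]
with probability at least $1-ce^{-t^2}$, where the Rademacher complexity of the indicator class is controlled by the Gaussian width via a contraction and Lipschitz smoothing argument. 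Multiplying by $\delta^2 r^2$ and matching against $\Lambda_Q r^2 = (\delta^2 Q/4)r^2$ yields the claimed condition that $r\in \Omega_{\mathcal{Q}}$ and satisfies the displayed inequality.

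\textbf{Bounding $r_{\mathcal{M}}$.} The multiplier process $\mathcal{P}_N\mathcal{M}_{\mathbf{v}}$ involves products of the truncated residual $\widetilde{y}-\langle \widetilde{\mathbf{x}},\eta\theta_*\rangle$ with $\langle \widetilde{\mathbf{x}},\mathbf{v}\rangle$. Since both factors are $L_\infty$-bounded by $O(\tau\sqrt{d})$ after truncation, I would apply Talagrand's inequality in conjunction with a standard symmetrization--contraction argument. The expectation of the suprema is dominated, via Gaussianization (the elliptical symmetry lets one replace Rademacher sums by Gaussian ones up to constants depending on $\nu$), by $C'(\nu,\kappa,\nu_q)(\omega(S_2(0,r)\cap B_\Psi(0,\rho))+r)/\sqrt{N}$. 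Adding the deviation term of order $\beta/\sqrt{N}$ and demanding this be at most $\Lambda_M r^2 = (\delta^2 Q^2/64)r^2$ gives the displayed characterization of $r_{\mathcal{M}}$.

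\textbf{Bounding $r_{\mathcal{V}}$.} The bias $\mathcal{V}_{\mathbf{v}} = \mathbb{E}[(\widetilde{y}-\langle \widetilde{\mathbf{x}},\eta\theta_*\rangle)\langle \widetilde{\mathbf{x}},\mathbf{v}\rangle]$ decomposes into the population inner product $\mathbb{E}[(y-\langle \mathbf{x},\eta\theta_*\rangle)\langle \mathbf{x},\mathbf{v}\rangle]$, which vanishes by the choice \eqref{eq:scaling-const} of $\eta$, plus a truncation remainder. Cauchy--Schwarz against the tail $\Pr(\|\mathbf{x}\|_2/\sqrt{d}>\tau)$ combined with the moment bounds from Assumption~\ref{assumption:moment} and $\tau = N^{2/(q+4)}$ shows this remainder is of order $\|\mathbf{v}\|_2/\sqrt{N}$, so $|\mathcal{V}_{\mathbf{v}}|\leq C''(\nu,\kappa,\nu_q)r/\sqrt{N}$. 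Solving $C''r/\sqrt{N}\leq (\delta^2Q^2/64)r^2$ for $r$ yields the stated $O((\delta^2 Q^2\sqrt{N})^{-1})$ bound on $r_{\mathcal{V}}$.

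\textbf{Main obstacle.} The delicate step is the small-ball transfer for $r_{\mathcal{Q}}$: one must show that truncating the radial factor of an elliptical measurement does not appreciably erode the probability $\Pr(|\langle \widetilde{\mathbf{x}},\mathbf{v}\rangle|\geq \delta r)$ uniformly in $\mathbf{v}$, and this is precisely why the extra sample-size requirement $N\geq (4/Q^2)\mathbb{E}[\mu^2]^2\lambda_{\max}(\mathbf{\Sigma})/d^2$ appears in the definition of $\Omega_{\mathcal{Q}}$. Without this separation, the truncation bias in the indicator would dominate the small-ball constant $Q$ and the quadratic lower bound would collapse.
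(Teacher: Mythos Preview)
Your overall architecture (small-ball method for $r_{\mathcal{Q}}$, multiplier process for $r_{\mathcal{M}}$, bias splitting for $r_{\mathcal{V}}$) matches the paper's, and your identification of the small-ball transfer as the delicate point is correct. However, two technical steps in your outline would not go through as written.

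\textbf{The multiplier/Rademacher width bound.} For both $r_{\mathcal{Q}}$ and $r_{\mathcal{M}}$ you propose to control the empirical process either by Talagrand's inequality with the crude envelope $O(\tau\sqrt{d})$ or by an unspecified ``Gaussianization via elliptical symmetry.'' The first route fails: the $L_\infty$ envelope of the summands is of order $d\tau^2 r$, so the Bernstein term in Talagrand contributes $d\tau^2 r\beta/N$, which carries an uncontrolled dimension factor. The paper instead exploits the structural decomposition $\widetilde{\mathbf{x}}_i = \widetilde q_i\cdot(\sqrt{d}\,\mathbf{x}_i/\|\mathbf{x}_i\|_2)$ and the fact that the \emph{normalized direction} $\sqrt{d}\,\mathbf{x}_i/\|\mathbf{x}_i\|_2=\sqrt{d}\,\mathbf{B}U_i/\|\mathbf{B}U_i\|_2$ is sub-Gaussian with a dimension-free constant (a consequence of $\sqrt{d}\,U$ being sub-Gaussian on the sphere). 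The scalar multipliers $\widetilde q_i$ and $(\widetilde y_i-\langle\widetilde{\mathbf{x}}_i,\eta\theta_*\rangle)\widetilde q_i$ then have bounded $2(1+\epsilon)$-moments and are deterministically bounded by $O(\tau)=O(N^{1/2(1+\epsilon)})$, which is exactly the hypothesis of the truncated multiplier process lemma (proved by generic chaining in \citep{goldstein2016structured}). That lemma delivers the $C(\omega+r)/\sqrt{N}$ bound with constants independent of $d$; neither a bare contraction argument nor a generic Gaussian comparison produces this without the sub-Gaussian direction observation.

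\textbf{The population term in $r_{\mathcal{V}}$.} You assert that $\mathbb{E}[(y-\langle\mathbf{x},\eta\theta_*\rangle)\langle\mathbf{x},\mathbf{v}\rangle]=0$ follows from the definition \eqref{eq:scaling-const} of $\eta$. But that definition only fixes $\mathbb{E}[y\langle\mathbf{x},\theta_*\rangle]$; to get vanishing for \emph{every} $\mathbf{v}$ you need $\mathbb{E}[y\mathbf{x}]=\eta\,\mathbf{\Sigma}\theta_*$, i.e.\ that $\mathbb{E}[y\mathbf{x}]$ points in the direction $\mathbf{\Sigma}\theta_*$. This is false for a generic single-index model and is precisely where the elliptical conditional-expectation identity $\mathbb{E}[\langle\mathbf{x}_0,\mathbf{u}\rangle\mid\langle\mathbf{x}_0,\mathbf{w}\rangle]=\langle\mathbf{u},\mathbf{w}\rangle\langle\mathbf{x}_0,\mathbf{w}\rangle$ (for isotropic elliptical $\mathbf{x}_0$) enters. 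The paper invokes this identity explicitly to kill term (II) in its decomposition; without it the bias is not $O(r/\sqrt{N})$ and the stated bound on $r_{\mathcal{V}}$ does not follow.
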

The above lemma is a combination of Corollary \ref{lem:bound-Q-2}, Lemma \ref{lem:bound-M-2} and \ref{lem:bound-V-2} in the appendix. The bound on $r_{\mathcal{Q}}$ is established through another truncated small-ball argument, in conjunction with the key lower bound on quadratic forms in \citep{mendelson2014learning} as well as a recent bound on truncated multiplier process in \citep{goldstein2016structured}. The bounds on $r_{\mathcal{M}}$ and $r_{\mc V}$ relies on the bound in \citep{goldstein2016structured} again and the rotational symmetric property of the elliptical symmetric distribution.

We also need the following lemma bounding the Gaussian mean-width $ \omega( B_\Psi(0,\rho)\cap B_2(0,r))$.
\begin{lemma}[Lemma 5.3 of \citep{lecue2016regularization}]\label{lem:mendelson-sparse}
Suppose $\Psi(\cdot) = \|\cdot\|_1$, then,
there exists an absolute constant $C_0$ for which the following holds,
\[
\omega( B_\Psi(0,\rho)\cap B_2(0,r))
\leq C_0\min_k\l\{ r\sqrt{(k-1)\log(ed/(k-1))} + \rho\sqrt{\log(ed/k)} \r\}
\]
\end{lemma}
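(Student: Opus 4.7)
The plan is a coordinate-peeling argument keyed to the order statistics of the Gaussian vector $\mathbf{g}\sim\mathcal{N}(0,\mathbf{I}_{d\times d})$. For a fixed $k\in\{1,\ldots,d\}$, I would let $S=S(\mathbf{g})\subseteq\{1,\ldots,d\}$ denote the random set of indices corresponding to the $k-1$ largest values of $|g_i|$ (so $|S|=k-1$, with $S=\emptyset$ when $k=1$). For any $v\in B_\Psi(0,\rho)\cap B_2(0,r)$, splitting the inner product at $S$ and applying Cauchy--Schwarz on the first piece and H\"older's inequality on the second gives
\[
\dotp{\mathbf{g}}{v}=\dotp{\mathbf{g}_S}{v_S}+\dotp{\mathbf{g}_{S^c}}{v_{S^c}}\leq\|\mathbf{g}_S\|_2\|v_S\|_2+\|\mathbf{g}_{S^c}\|_\infty\|v_{S^c}\|_1\leq r\|\mathbf{g}_S\|_2+\rho\|\mathbf{g}_{S^c}\|_\infty.
\]
The crucial observation is that, because $S$ is chosen from $\mathbf{g}$ rather than from $v$, the right-hand side no longer depends on $v$. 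Taking the supremum over $v\in B_\Psi(0,\rho)\cap B_2(0,r)$ and then expectation in $\mathbf{g}$ therefore yields
\[
\omega\bigl(B_\Psi(0,\rho)\cap B_2(0,r)\bigr)\leq r\,\mathbb{E}\|\mathbf{g}_S\|_2+\rho\,\mathbb{E}\|\mathbf{g}_{S^c}\|_\infty.
\]

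The next step is to bound each of these two expectations via order statistics of $|\mathbf{g}|$. Writing $|g|_{(1)}\geq\cdots\geq|g|_{(d)}$ for the decreasing rearrangement of $(|g_1|,\ldots,|g_d|)$, the definition of $S$ gives $\|\mathbf{g}_S\|_2^2=\sum_{i=1}^{k-1}|g|_{(i)}^2$ and $\|\mathbf{g}_{S^c}\|_\infty=|g|_{(k)}$. Standard sub-Gaussian estimates on Gaussian order statistics (obtained from $\Pr(|g|_{(i)}>t)\leq\binom{d}{i}\Pr(|g_1|>t)^i$ followed by integration of the tail) give
\[
\mathbb{E}|g|_{(i)}^2\leq C\log(ed/i),\qquad\mathbb{E}|g|_{(k)}\leq C\sqrt{\log(ed/k)}.
\]
Combining these with the elementary integral estimate $\sum_{i=1}^{k-1}\log(ed/i)\lesssim (k-1)\log(ed/(k-1))$ and Jensen's inequality yields $\mathbb{E}\|\mathbf{g}_S\|_2\lesssim\sqrt{(k-1)\log(ed/(k-1))}$. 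Plugging both bounds back into the previous display produces a single-$k$ estimate of the required shape, and minimizing over $k\in\{1,\ldots,d\}$ finishes the proof.

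The only step that needs a bit of care is the boundary behavior in $k$. For $k=1$ the set $S$ is empty and the argument collapses to $\omega\leq\rho\,\mathbb{E}\|\mathbf{g}\|_\infty\lesssim\rho\sqrt{\log(ed)}$, which matches the lemma under the natural convention $(k-1)\log(ed/(k-1))|_{k=1}=0$; for $k$ close to $d$ one only needs to keep the trivial lower bound $\log(ed/(k-1))\geq 1$ in the integral estimate so that both terms remain well defined. Neither issue changes the form of the bound -- they only affect the absolute constant $C_0$. I do not expect any genuinely hard step here; the only ``cleverness'' is selecting $S$ as a function of $\mathbf{g}$ (not of $v$) so that Cauchy--Schwarz and H\"older decouple cleanly and the sharp $\sqrt{\log(ed/k)}$ factor (instead of the cruder $\sqrt{\log d}$) emerges from the $k$-th order statistic.
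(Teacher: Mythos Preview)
Your argument is correct and is exactly the classical coordinate--peeling proof of this bound. Note, however, that the paper does not supply its own proof of this lemma: it is simply quoted as Lemma~5.3 of \citep{lecue2016regularization} and used as a black box in the proof of Theorem~\ref{thm:main-sparse-2}. Your splitting at the top $k-1$ order statistics of $|\mathbf{g}|$, followed by Cauchy--Schwarz on $S$ and H\"older on $S^c$, is precisely the argument in the cited reference, and your handling of the boundary cases $k=1$ and $k\approx d$ is appropriate.
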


\begin{proof}[Proof of Theorem \ref{thm:main-sparse-2}]
First, taking $k=s$ in Lemma \ref{lem:mendelson-sparse} gives 
\[
\omega( B_\Psi(0,\rho)\cap B_2(0,r))\leq C_0 \l(r\sqrt{s\log(ed/s)} + \rho\sqrt{\log(ed/s)}\r),
\]
for some absolute constant $C_0>0$.
By Lemma \ref{lem:bound-radiuses-2}, we have, with probability at least $1-p_{\mathcal{Q}}$,
\[
r_{\mathcal{Q}}\leq \inf\l\{ r\in\Omega_{\mathcal{Q}}: \l(\frac{\delta Q}{2} - \frac{\delta t + C(\nu_q,\kappa)}{\sqrt{N}}\r)r\geq \frac{C_2(\nu_q,\kappa) \l(r\sqrt{s\log(ed/s)} + \rho\sqrt{\log(ed/s)}\r)}{\sqrt{N}} \r\},
\]
and when 
\begin{multline}\label{N-bound}
N\geq \l(\delta t + C(\nu_q,\kappa) + C_2(\nu_q,\kappa)\sqrt{s\log(ed/s)}\r)^2\cdot \frac{16}{\delta^2Q^2}\\
+\frac{4}{Q^2}\frac{\expect{\mu^2}^2\lambda_{\max}(\mf{\Sigma})}{d^2} + C_0\l(\l(r\sqrt{s\log(ed/s)} + \rho\sqrt{\log(ed/s)}\r) + r\r)^2,
\end{multline}
it follows
\[
r_{\mathcal{Q}}\leq \frac{4\rho\sqrt{\log(ed/s)}}{\sqrt{N}}.
\]
Also, by Lemma \ref{lem:bound-radiuses-2}, when $N\geq C_0^2 \l(r\sqrt{s\log(ed/s)} + \rho\sqrt{\log(ed/s)}\r)^2$, with probability at least $1-p_{\mathcal{M}}$,
\[
r_{\mathcal{M}}\leq C(\nu,\kappa,\nu_q)\l( \frac{\beta\sqrt{s\log(ed/s)}}{\delta^2Q^2\sqrt{N}} + \frac{\rho\sqrt{\log(ed/s)}}{\delta Q\sqrt{N}} \r)
\]
and
$
r_{\mc V}\leq \frac{64C(\nu,\kappa,\nu_q)}{\delta^2Q^2\sqrt N}.
$
Overall, since the final radius bound $r(\rho)= \max\{ r_{\mathcal{Q}},r_{\mathcal{M}}, r_{\mc V} \}$, 
we have when $N$ satisfies the bound \eqref{N-bound}, 
\[
r(\rho)\leq  \frac{4\rho\sqrt{\log(ed/s)}}{\sqrt{N}} + C(\nu,\kappa,\nu_q)\l( \frac{\beta\sqrt{s\log(ed/s)}}{\delta^2Q^2\sqrt{N}} + \frac{\rho\sqrt{\log(ed/s)}}{\delta Q\sqrt{N}} \r) + \frac{64C(\nu,\kappa,\nu_q)}{\delta^2Q^2\sqrt N}.
\]
By Lemma \ref{lem:sparse-eq}, $\rho\geq 8r(\rho)\sqrt{s}$ implies the sparsity condition $\Delta(\eta\theta_*,\rho)\geq 3\rho/4$. Thus, the sparsity condition holds for any 
$
\rho\geq C_1(\nu,\kappa,\nu_q)\beta s\sqrt{\log(ed/s)/N}.
$
In particular, take the equality in this bound and this implies $r(\rho)\leq C_0(\nu,\kappa,\nu_q)\beta\sqrt{s\log(ed/s)/N}$ for any 
$N\geq c_0(\nu,\kappa,\nu_q)\l(\frac{\delta t + \beta\sqrt{s\log(ed/s)}}{\delta Q}\r)^2+ \frac{4}{Q^2}\frac{\expect{\mu^2}^2\lambda_{\max}(\mf{\mf\Sigma})}{d^2}.$
 By Theorem \ref{thm:master}, we need to choose $\lambda = c_1(\nu,\kappa,\nu_q)\beta\sqrt{\log(ed/s)/N}$.
\end{proof}

\section{Conclusions}\label{sec:conclude}
In this paper, we introduce a truncation procedure to robustify the sparse recovery with general heavy-tailed measurements and structured single-index model with heavy-tailed elliptical measurements. We show that a new line of analysis leads to optimal sample and error rates regarding the two problem under rather minimal moment assumptions, thereby improving upon many of the previous results in the robust recovery area by relaxing assumptions on moments and prior knowledge of the measurements.

\section*{Acknowledgement}
The author thanks Stanislav Minsker and Larry Goldstein for helpful discussions related to the topic. 
The author is partially supported by Ming-Hsieh Scholarship of USC and National Science Foundation grant DMS-1712956.

\bibliographystyle{imsart-nameyear}
\bibliography{CS-Heavy-tail}

\newpage
\appendix
\section{Proof of Theorem \ref{thm:master}}

\begin{figure}[htbp]
 \centering
   \includegraphics[width=4in]{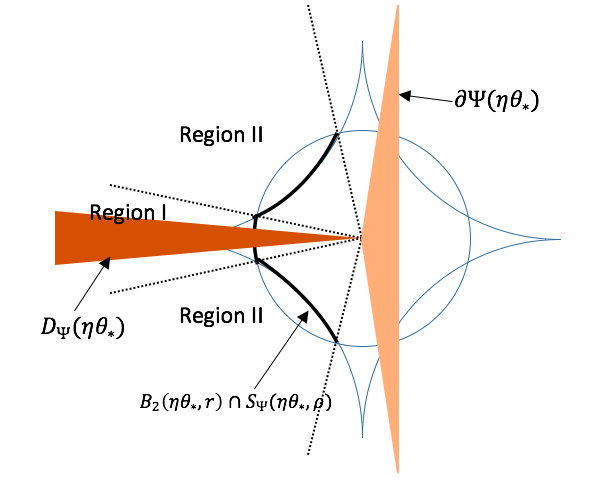} 
   \caption{A geometric interpretation that $\theta\not\in B_{2}(\eta\theta_*,r)\cap B_\Psi(\eta\theta_*,\rho)$ implies $\mc{P}_N\mathcal{L}^{\lambda}_{\theta-\eta\theta_*}>0$: When the set of sub-differentials $\partial\Psi(\eta\theta_*)$ is large, the set of descent directions $D_\Psi(\eta\theta_*)$ is small. Then, region I contains $D_\Psi(\eta\theta_*)$, in which $\Psi(\theta)\leq\Psi(\eta\theta_*)$, and the quadratic term $\mc{P}_N\mathcal{Q}_{\theta-\eta\theta_*}$ is expected to dominate $- 2\mc{P}_N\mc{M}_{\theta-\eta\theta_*} - 2 \mc{V}_{\theta-\eta\theta_*}$. On the other hand, any vector $\theta$ in region II has $\Psi(\theta)>\Psi(\eta\theta_*)$, which gives sufficient increase of norm values to dominate $- 2\mc{P}_N\mc{M}_{\theta-\eta\theta_*} - 2 \mc{V}_{\theta-\eta\theta_*}$.}
   \label{fig:geometry}
\end{figure}

\begin{proof}[Proof of Theorem \ref{thm:master}]
First of all, we have for any $\theta\in\mb{R}^d$
\[
\mc{P}_N\mathcal{L}^{\lambda}_{\theta-\eta\theta_*}
\geq \mc{P}_N\mathcal{Q}_{\theta-\eta\theta_*} - 2|\mc{P}_N\mc{M}_{\theta-\eta\theta_*}| - 2 |\mc{V}_{\theta-\eta\theta_*}| + \lambda\l( \Psi\l(\theta\r) - \Psi\l(\eta\theta_*\r) \r)
\]
\begin{enumerate}
\item Consider first that $\|\theta-\eta\theta_*\|_2>r(\rho)$ and $\Psi(\theta-\eta\theta_*)\leq\rho$. By definition of $r(\rho)$, we have
\[
 \mc{P}_N\mathcal{Q}_{\theta-\eta\theta_*} = \frac{\|\theta-\eta\theta_*\|_2^2}{r(\rho)^2}\cdot  
  \mc{P}_N\mathcal{Q}_{\frac{\theta-\eta\theta_*}{\|\theta-\eta\theta_*\|_2}r(\rho)} 
 \geq \Lambda_Q\|\theta-\eta\theta_*\|_2^2,
\]
with probability at least $1-p_{\mathcal{Q}}$, and 
\[
|\mc{P}_N\mc{M}_{\theta-\eta\theta_*}| = \l|\mc{P}_N\mc{M}_{\frac{\theta-\eta\theta_*}{\|\theta-\eta\theta_*\|_2}r(\rho)}  \r|\cdot\frac{\|\theta-\eta\theta_*\|_2}{r(\rho)}
\leq \Lambda_M\|\theta-\eta\theta_*\|_2r(\rho)\leq  \Lambda_M\|\theta-\eta\theta_*\|_2^2,
\]
with probability at least $1-p_{\mathcal{M}}$. Also,
\[
|\mc{V}_{\theta-\eta\theta_*}| = \l|\mc{V}_{\frac{\theta-\eta\theta_*}{\|\theta-\eta\theta_*\|_2}r(\rho)}  \r|\cdot\frac{\|\theta-\eta\theta_*\|_2}{r(\rho)}
\leq \Lambda_{\mc V}\|\theta-\eta\theta_*\|_2r(\rho)\leq  \Lambda_{\mc V}\|\theta-\eta\theta_*\|_2^2.
\]
For $\lambda\leq c_1\frac{r(\rho)^2}{\rho}$, we have 
$$\lambda(\Psi(\theta) - \Psi(\eta\theta_*))\geq -c_1\frac{r(\rho)^2}{\rho}\cdot\Psi(\theta-\eta\theta_*)
\geq -c_1r(\rho)^2\geq - c_1\|\theta-\eta\theta_*\|_2^2.$$
By the assumption that $\Lambda_Q> 2(\Lambda_M+\Lambda_{\mc{V}})+c_1$, we know that $ \mc{P}_N\mathcal{L}^{\lambda}_{\theta-\eta\theta_*}>0$ with probability at least $1-p_{\mathcal{Q}}-p_{\mathcal{M}}$ for $\|\theta-\eta\theta_*\|_2>r(\rho)$ and $\Psi(\theta-\eta\theta_*)\leq\rho$.

\item Consider the case $\|\theta-\eta\theta_*\|_2\leq r(\rho)$ and $\Psi(\theta-\eta\theta_*)>\rho$, then, for any specific $\theta$ satisfying the aforementioned conditions,
\begin{align*}
\mc{P}_N\mathcal{L}^{\lambda}_{\theta-\eta\theta_*}
\geq&   - 2|\mc{P}_N\mc{M}_{\theta-\eta\theta_*}| - 2 |\mc{V}_{\theta-\eta\theta_*}| + \lambda\l( \Psi\l(\theta\r) - \Psi\l(\eta\theta_*\r) \r)\\
=& \l(- 2\l|\mc{P}_N\mc{M}_{\frac{\theta-\eta\theta_*}{\Psi(\theta-\eta\theta_*)}\rho}\r|- 2 \l|\mc{V}_{\frac{\theta-\eta\theta_*}{\Psi(\theta-\eta\theta_*)}\rho}\r|\r)\cdot\frac{\Psi(\theta-\eta\theta_*)}{\rho}  + \lambda\l( \Psi\l(\theta\r) - \Psi\l(\eta\theta_*\r) \r)\\
\geq& - 2(\Lambda_M+\Lambda_{\mc{V}})r(\rho)^2\cdot\frac{\Psi(\theta-\eta\theta_*)}{\rho}  + \lambda\l( \Psi\l(\theta\r) - \Psi\l(\eta\theta_*\r) \r).
\end{align*}
Let $\mf u\in B_{\Psi}(\eta\theta_*,\rho/16)$ be the vector containing a sub-dfferential $\mf z\in\partial\Psi(\mf u)$ such that $\dotp{\mf z}{\theta-\eta\theta_*}\geq \frac34\Psi(\theta-\eta\theta_*) $. Note that this is possible because by the assumption that $\Delta(\eta\theta_*,\rho)\geq\frac34\rho$, we have there exists $\mf u\in B_{\Psi}(\eta\theta_*,\rho/16)$ with a sub-dfferential $\mf z\in\partial\Psi(\mf u)$ such that 
$
\dotp{\mf z}{\frac{\theta-\eta\theta_*}{\Psi(\theta-\eta\theta_*)}\rho}\geq\frac34\rho.
$
Thus, for the same choice of $\mf u$ and $\mf z$, $\Psi(\theta-\eta\theta_*)>\rho$ implies
\begin{equation}\label{sub-diff-bound}
\dotp{\mf z}{\theta-\eta\theta_*}=\dotp{\mf z}{\frac{\theta-\eta\theta_*}{\Psi(\theta-\eta\theta_*)}\rho}\cdot\frac{\Psi(\theta-\eta\theta_*)}{\rho} \geq\frac34\Psi(\theta-\eta\theta_*) .
\end{equation}
This implies 
\begin{align*}
\mc{P}_N\mathcal{L}^{\lambda}_{\theta-\eta\theta_*}
\geq&
- 2(\Lambda_M+\Lambda_{\mc{V}})r(\rho)^2\cdot\frac{\Psi(\theta-\eta\theta_*)}{\rho}  + \lambda\l( \Psi\l(\theta\r) - \Psi\l(\eta\theta_*+\mf u - \mf u\r) \r)\\
\geq& - 2(\Lambda_M+\Lambda_{\mc{V}})r(\rho)^2\cdot\frac{\Psi(\theta-\eta\theta_*)}{\rho}  + \lambda\l( \Psi\l(\theta\r) - \Psi\l(\mf u \r) - \frac{\rho}{16} \r)\\
\geq& - 2(\Lambda_M+\Lambda_{\mc{V}})r(\rho)^2\cdot\frac{\Psi(\theta-\eta\theta_*)}{\rho}  +\lambda\l( \dotp{\mf z}{\theta-\mf u} - \frac{\rho}{16} \r)\\
\geq& - 2(\Lambda_M+\Lambda_{\mc{V}})r(\rho)^2\cdot\frac{\Psi(\theta-\eta\theta_*)}{\rho}  +\lambda\l( \dotp{\mf z}{\theta-\eta\theta_*} - \frac{\rho}{8} \r)\\
\geq& \l(- 2(\Lambda_M+\Lambda_{\mc{V}})r(\rho)^2  +\lambda\l( \frac{3\rho}{4} - \frac{\rho}{8} \r)\r)\cdot\frac{\Psi(\theta-\eta\theta_*)}{\rho},
\end{align*}
where the second inequality follows from $\mf u\in B_{\Psi}(\eta\theta_*,\rho/16)$, the third inequality follows from the definition of sub-differential, the fourth inequality follows from Holder's inequality $\dotp{\mf z}{\eta\theta_*-\mf u}\leq \Psi^*(\mf z)\Psi(\eta\theta_*-\mf u)\leq\frac{\rho}{16} $ and the final inequality follows from the preceding argument \eqref{sub-diff-bound}. Now, we use the assumption that $\lambda\geq c_2\frac{r(\rho)^2}{\rho}$ and $c_2\geq4(\Lambda_M+\Lambda_{\mc{V}})$ to conclude that $\mc{P}_N\mathcal{L}^{\lambda}_{\theta-\eta\theta_*}>0$.

\item The case $\|\theta-\eta\theta_*\|_2> r(\rho)$ and $\Psi(\theta-\eta\theta_*)>\rho$. If $\frac{\|\theta-\eta\theta_*\|_2}{\Psi(\theta-\eta\theta_*)}>\frac{r(\rho)}{\rho}$, then, let $\alpha = \frac{\Psi(\theta-\eta\theta_*)}{\rho}$ and we have
\[
\mc{P}_N\mathcal{L}^{\lambda}_{\theta-\eta\theta_*}\geq \alpha^2\mc{P}_N\mc{Q}_{\frac{\theta-\eta\theta_*}{\Psi(\theta-\eta\theta_*)}\rho} 
- 2\alpha\l(\l| \mc{P}_N\mc{M}_{\frac{\theta-\eta\theta_*}{\Psi(\theta-\eta\theta_*)}\rho}  \r| + \l|\mc{V}_{\frac{\theta-\eta\theta_*}{\Psi(\theta-\eta\theta_*)}\rho}\r| 
+\lambda\rho \r) >0,
\]
by part 1.
On the other hand, if $\frac{\|\theta-\eta\theta_*\|_2}{\Psi(\theta-\eta\theta_*)}\leq\frac{r(\rho)}{\rho}$, then, let $\alpha = \frac{\|\theta-\eta\theta_*\|_2}{r(\rho)}$ and we have
\[
\mc{P}_N\mathcal{L}^{\lambda}_{\theta-\eta\theta_*}\geq 
- 2\alpha\l(\l| \mc{P}_N\mc{M}_{\frac{\theta-\eta\theta_*}{\|\theta-\eta\theta_*\|_2}r(\rho)}  \r| + \l|\mc{V}_{\frac{\theta-\eta\theta_*}{\|\theta-\eta\theta_*\|_2}r(\rho)}\r|\r)
+\lambda(\Psi(\theta) - \Psi(\eta\theta_*))>0,
\]
by part 2. 
\end{enumerate}
Overall, we finish the proof.
\end{proof}

\section{Sparse recovery with heavy-tailed measurements}\label{sec:sparse-recovery}
In this section, we focus on the proof of Lemma \ref{lem:bound-on-radiuses}. Our goal is to compute $r_{\mathcal{Q}},~r_{\mathcal{M}},~r_{\mc{V}}$ for specific constants 
$\Lambda_Q,~\Lambda_M,~\Lambda_{\mc V}$ satisfying the assumptions and determine the choice of $\rho$ so that 
$\Delta(\eta\theta_*,\rho)\geq \frac34\rho$.

\subsection{A truncated small-ball condition}
We start with the classical small-ball estimate:
\begin{lemma}\label{lem:small-ball}
Under Assumption \ref{assumption:moment}, let $\delta = \frac12\sqrt{\frac{\kappa}{2}}$ and $Q=\frac{\kappa^2}{8\nu}$, then, we have
\[
\inf_{\mathbf{v}\in\mathbb{R}^{d}} Pr\l( \Big| \dotp{\mathbf{x}_i}{\mathbf{v}} \Big| \geq 2\delta\|\mathbf{v}\|_2 \r) \geq 2Q.
\]
\end{lemma}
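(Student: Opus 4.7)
The plan is to prove this small-ball estimate via the classical Paley–Zygmund inequality applied to the squared linear form $Z = \langle \mathbf{x}, \mathbf{v}\rangle^2$. Both the inputs we need are already in Assumption \ref{assumption:moment}: a lower bound on $\mathbb{E} Z$ (the non-degeneracy condition) and an upper bound on $\mathbb{E} Z^2$ (bounded kurtosis).

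First I would reduce to the case $\|\mathbf{v}\|_2 = 1$ by homogeneity (the event $\{|\langle\mathbf{x},\mathbf{v}\rangle| \geq 2\delta\|\mathbf{v}\|_2\}$ is invariant under scaling of $\mathbf{v}$, and the case $\mathbf{v}=0$ is trivial). Then, with $Z = \langle\mathbf{x},\mathbf{v}\rangle^2 \geq 0$, Assumption \ref{assumption:moment} gives
\[
\mathbb{E} Z \geq \kappa \qquad\text{and}\qquad \mathbb{E} Z^2 = \mathbb{E}\bigl[\langle\mathbf{x},\mathbf{v}\rangle^4\bigr] \leq \nu.
\]
The Paley–Zygmund inequality states that for a nonnegative random variable $Z$ and $\theta \in [0,1]$,
\[
\Pr\bigl(Z \geq \theta\, \mathbb{E} Z\bigr) \geq (1-\theta)^2 \frac{(\mathbb{E} Z)^2}{\mathbb{E} Z^2}.
\]
Applying this with $\theta = 1/2$ and using the bounds above yields
\[
\Pr\bigl(Z \geq \tfrac{\kappa}{2}\bigr) \;\geq\; \Pr\bigl(Z \geq \tfrac{1}{2}\mathbb{E} Z\bigr) \;\geq\; \tfrac{1}{4}\cdot\frac{\kappa^2}{\nu} \;=\; 2Q,
\]
since $2Q = \kappa^2/(4\nu)$ by definition. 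Translating back, $Z \geq \kappa/2$ is exactly $|\langle\mathbf{x},\mathbf{v}\rangle| \geq \sqrt{\kappa/2} = 2\delta$ (in the normalized setting), so we obtain the desired bound uniformly over all $\mathbf{v}\in\mathbb{R}^d$ with $\|\mathbf{v}\|_2 = 1$, and hence over all $\mathbf{v}\in\mathbb{R}^d$ by homogeneity.

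There is no real obstacle here; the computation hinges entirely on checking that the specific constants $\delta = \tfrac{1}{2}\sqrt{\kappa/2}$ and $Q = \kappa^2/(8\nu)$ chosen in the lemma statement match exactly what Paley–Zygmund produces when one picks $\theta = 1/2$. Other choices of $\theta$ would give similar results with different constants, but $\theta = 1/2$ is essentially the natural choice balancing the two factors $(1-\theta)^2$ and the event $\{Z \geq \theta \mathbb{E} Z\}$ being not too restrictive relative to $\kappa/2$.
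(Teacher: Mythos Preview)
Your proof is correct and follows essentially the same approach as the paper: apply the Paley--Zygmund inequality to $Z=\langle\mathbf{x},\mathbf{v}\rangle^2$ with $\theta=1/2$, and feed in the non-degeneracy lower bound $\mathbb{E}Z\geq\kappa$ and the kurtosis upper bound $\mathbb{E}Z^2\leq\nu$ from Assumption~\ref{assumption:moment}. The only cosmetic difference is that you normalize to $\|\mathbf{v}\|_2=1$ at the outset, whereas the paper carries the general $\mathbf{v}$ through and normalizes at the end.
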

\begin{proof}
By Paley-Zygmund inequality, we know for any nonnegative real valued random variable $Z$,
\[
Pr(Z>t\expect{Z})\geq (1-t)^2\frac{\expect{Z}^2}{\expect{Z^2}},
\]
for any $t\geq0$. Now, fix any $\mathbf{v}\in\mathbb{R}^{d}$, we take $Z=| \dotp{\mathbf{x}_i}{\mathbf{v}} |^2$, $t = 1/2$, and obtain
\[
Pr\l(  \Big| \dotp{\mathbf{x}_i}{\mathbf{v}} \Big|^2 \geq \frac12 \expect{\Big| \dotp{\mathbf{x}_i}{\mathbf{v}} \Big|^2}\r)
\geq\frac14\frac{\expect{| \dotp{\mathbf{x}_i}{\mathbf{v}} |^2}^2}{\expect{| \dotp{\mathbf{x}_i}{\mathbf{v}} |^4}}
\] 
Recall from Assumption \ref{assumption:moment}, $\lambda_{\min}(\mathbf{\mathbf{\Sigma}}_X)>\kappa$, thus, $\expect{\Big| \dotp{\mathbf{x}_i}{\mathbf{v}} \Big|^2}\geq\kappa\|\mathbf{v}\|_2^2$
for any $\mathbf{v}\in\mathbb{R}^{d}$, and it follows,
\begin{align*}
\inf_{\mathbf{v}\in\mathbb{R}^{d}} Pr\l(  \Big| \dotp{\mathbf{x}_i}{\mathbf{v}} \Big| \geq \sqrt{\frac{\kappa}{2}}\|\mathbf{v}\|_2 \r)
&\geq \inf_{\mathbf{v}\in\mathbb{R}^{d}} Pr\l(  \Big| \dotp{\mathbf{x}_i}{\mathbf{v}} \Big|^2 \geq \frac12 \expect{\Big| \dotp{\mathbf{x}_i}{\mathbf{v}} \Big|^2}\r)\\
&\geq \inf_{\mathbf{v}\in\mathbb{R}^{d}}  \frac14\expect{\Big| \dotp{\mathbf{x}_i}{\mathbf{v}} \Big|^2}^2\left/\expect{\Big| \dotp{\mathbf{x}_i}{\mathbf{v}} \Big|^4}\right.\\
&\geq  \frac14\frac{ \inf_{\mathbf{v}\in\mathbb{S}_2^d} \expect{| \dotp{\mathbf{x}_i}{\mathbf{v}} |^2}^2}{\sup_{\mathbf{v}\in\mathbb{S}_2^d}
\expect{| \dotp{\mathbf{x}_i}{\mathbf{v}}  |^4}}\geq \frac{\kappa^2}{4\nu},
\end{align*}
where the last inequality follows from Assumption \ref{assumption:moment}. Taking $\delta = \frac12\sqrt{\frac{\kappa}{2}}$ and $Q=\frac{\kappa^2}{8\nu}$ finishes the proof.
\end{proof}

We see from Lemma \ref{lem:small-ball} that indeed such a small-ball condition is easily satisfied merely under a bounded moment assumption. The following lemma is the key to our analysis. It says a somewhat ``weaker'' small-ball condition is preserved under adaptive thresholding.

\begin{lemma}\label{lem:weak-small-ball}
Let $s_0$ be a positive integer such that $1\leq s_0\leq d$. Let $\mathcal{G}_{s_0}$ be the set of all vectors in $\mathbb{R}^d$ with $s_0$ cardinality of the support set. Suppose Assumption \ref{assumption:moment} holds and $N\geq \frac{\nu}{Q}s_0\log(ed) $, then,
for any $\mathbf{v}\in \mathcal{G}_{s_0}$,
\[
Pr\l( \Big| \dotp{\widetilde{\mathbf{x}}_i}{\mathbf{v}} \Big| \geq \delta\|\mathbf{v}\|_2 \r) \geq Q.
\]
\end{lemma}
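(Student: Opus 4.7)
The plan is to transfer the small-ball estimate of Lemma \ref{lem:small-ball} to the truncated vector $\widetilde{\mathbf{x}}_i$, losing a factor of two in $\delta$ and one unit of $Q$ in the probability. The starting decomposition is
\[
|\langle\widetilde{\mathbf{x}}_i,\mathbf{v}\rangle| \ge |\langle\mathbf{x}_i,\mathbf{v}\rangle| - |\langle\mathbf{x}_i-\widetilde{\mathbf{x}}_i,\mathbf{v}\rangle|,
\]
so defining $A=\{|\langle\mathbf{x}_i,\mathbf{v}\rangle|\ge 2\delta\|\mathbf{v}\|_2\}$ and $B=\{|\langle\mathbf{x}_i-\widetilde{\mathbf{x}}_i,\mathbf{v}\rangle|\le \delta\|\mathbf{v}\|_2\}$, the event $A\cap B$ implies $|\langle\widetilde{\mathbf{x}}_i,\mathbf{v}\rangle|\ge \delta\|\mathbf{v}\|_2$. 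Lemma \ref{lem:small-ball} delivers $\Pr(A)\ge 2Q$, so it suffices to show $\Pr(B^c)\le Q$ under the stated sample-size hypothesis; a union bound then yields the claim.

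To control $B^c$, note that $\mathbf{x}_i-\widetilde{\mathbf{x}}_i$ only reacts to tail events (its $j$th coordinate vanishes on $\{|x_{ij}|\le\tau\}$), so the sparsity of $\mathbf{v}$ can be exploited by Cauchy--Schwarz on the (at most) $s_0$-element support:
\[
|\langle\mathbf{x}_i-\widetilde{\mathbf{x}}_i,\mathbf{v}\rangle|^2 \le \|\mathbf{v}\|_2^2 \sum_{j\in\mathrm{supp}(\mathbf{v})} (x_{ij}-\widetilde{x}_{ij})^2.
\]
A coordinatewise truncation-tail estimate then gives
\[
\mathbb{E}(x_{ij}-\widetilde{x}_{ij})^2 \le \mathbb{E}\bigl[|x_{ij}|^2\mathbf{1}\{|x_{ij}|>\tau\}\bigr] \le \frac{\nu}{\tau^2},
\]
obtained by Cauchy--Schwarz together with Markov applied to $|x_{ij}|^4\le\nu$, which is the bounded-kurtosis part of Assumption \ref{assumption:moment} (instantiated at coordinate vectors). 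Summing yields $\mathbb{E}|\langle\mathbf{x}_i-\widetilde{\mathbf{x}}_i,\mathbf{v}\rangle|^2 \le s_0\nu\|\mathbf{v}\|_2^2/\tau^2$, and a final Markov step gives
\[
\Pr(B^c)\le \frac{s_0\nu}{\delta^2\tau^2}.
\]
Substituting $\tau^2=\sqrt{N/\log(ed)}$ and invoking the sample-size hypothesis reduces the right-hand side to $Q$ (absorbing $\delta^2$ into the constant), and the union bound finishes the proof: $\Pr(|\langle\widetilde{\mathbf{x}}_i,\mathbf{v}\rangle|\ge\delta\|\mathbf{v}\|_2)\ge 2Q-Q=Q$.

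The main subtlety is keeping the residual-moment bound purely in terms of the kurtosis constant $\nu$, rather than introducing new constants from higher moments, so that only the two constants $\nu,Q$ enter the sample-size requirement. If the linear-in-$s_0$ scaling needs to be sharpened or the leading constant improved, one can replace the Markov step above by the stronger tail estimate $\mathbb{E}(|x_{ij}|-\tau)_+^2\le \nu_q^q/\tau^{q-2}$ using the $L^q$ bound from Assumption \ref{assumption:moment} for $q>20$; this is where the large number of available moments is paying off. Apart from this one moment-trade-off, the proof is essentially two lines of triangle inequality plus one line of Markov, so no serious obstacle arises.
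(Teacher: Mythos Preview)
Your decomposition into the events $A$ and $B$ and the use of Lemma~\ref{lem:small-ball} for $\Pr(A)\ge 2Q$ are exactly right, and this is also how the paper proceeds. The gap is in your control of $\Pr(B^c)$. Your Chebyshev-type bound gives
\[
\Pr(B^c)\le \frac{s_0\nu}{\delta^2\tau^2}=\frac{s_0\nu}{\delta^2}\sqrt{\frac{\log(ed)}{N}},
\]
and plugging in the stated hypothesis $N\ge \frac{\nu}{Q}s_0\log(ed)$ yields $\Pr(B^c)\le \sqrt{s_0\nu Q}/\delta^2$, which is \emph{not} bounded by $Q$ for general $s_0$. In other words, your argument would require $N\gtrsim s_0^2\log(ed)$, not the linear $N\gtrsim s_0\log(ed)$ in the statement; the parenthetical ``absorbing $\delta^2$ into the constant'' hides a genuine loss of one power of $\tau^2$. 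Your suggested fallback to the $L^q$ moment bound would repair this, but at the cost of introducing $\nu_q$ and $\delta$ into the sample-size condition, which the lemma as stated does not allow.

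The missing idea is that one should not bound a moment of $\langle\mathbf{x}_i-\widetilde{\mathbf{x}}_i,\mathbf{v}\rangle$ at all, but rather observe that this quantity is \emph{identically zero} on the event $\{\max_{j\in\mathrm{supp}(\mathbf{v})}|x_{ij}|\le\tau\}$, since then no coordinate in the support is truncated. Hence
\[
\Pr(B^c)\le \Pr\Bigl(\max_{j\in\mathrm{supp}(\mathbf{v})}|x_{ij}|>\tau\Bigr)
\le \Pr\Bigl(\sum_{j\in\mathrm{supp}(\mathbf{v})}x_{ij}^4>\tau^4\Bigr)
\le \frac{s_0\nu}{\tau^4}=\frac{s_0\nu\log(ed)}{N}\le Q,
\]
using Markov with the fourth moment directly on the tail event. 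This recovers the full $\tau^{-4}$ decay and matches the hypothesis exactly, with only $\nu$ and $Q$ appearing.
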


\begin{proof}
First, note that for any vector $\mathbf{v}\in \mathcal{G}_{s_0}$,
\begin{align*}
\l|  \dotp{\widetilde{\mathbf{x}}_i}{\mathbf{v}} \r| = \l|  \dotp{\widetilde{\mathbf{x}}_i - \mathbf{x}_i}{\mathbf{v}} +  \dotp{\mathbf{x}_i}{\mathbf{v}} \r|
\geq\l| \dotp{\mathbf{x}_i}{\mathbf{v}} \r| - \l| \dotp{\widetilde{\mathbf{x}}_i - \mathbf{x}_i}{\mathbf{v}}  \r|.
\end{align*}
Thus, it follows
\begin{align}
Pr\l( \l| \dotp{\widetilde{\mathbf{x}}_i}{\mathbf{v}} \r| \geq \delta\|\mathbf{v}\|_2 \r) 
\geq& Pr\l( \l| \dotp{\mathbf{x}_i}{\mathbf{v}} \r| \geq \delta\|\mathbf{v}\|_2 +  \l| \dotp{\widetilde{\mathbf{x}}_i - \mathbf{x}_i}{\mathbf{v}}  \r| \r) \nonumber\\
\geq& Pr\l( \l\{ \l| \dotp{\mathbf{x}_i}{\mathbf{v}} \r| \geq 2\delta\|\mathbf{v}\|_2 \r\}   
  \cap   \l\{ \l| \dotp{\widetilde{\mathbf{x}}_i - \mathbf{x}_i}{\mathbf{v}}  \r|\leq \delta\|\mathbf{v}\|_2 \r\}  \r) \nonumber\\
\geq&  Pr\l( \l| \dotp{\mathbf{x}_i}{\mathbf{v}} \r| \geq 2\delta\|\mathbf{v}\|_2 \r)  
- 
Pr\l( \l| \dotp{\widetilde{\mathbf{x}}_i - \mathbf{x}_i}{\mathbf{v}}  \r|\geq\delta\|\mathbf{v}\|_2 \r), \label{inter-0}
\end{align}
where the last inequality follows from the fact that for any two measurable set $A,B$ in a probability space $(\Omega,\mathcal{E},\mathbb{P})$, $Pr(A\cap B) = Pr(A \setminus (B^c\cap A) )\geq Pr(A) - Pr(B^c\cap A)\geq Pr(A) - Pr(B^c)$. By Lemma \ref{lem:small-ball}, $Pr\l( \Big| \dotp{\mathbf{x}_i}{\mathbf{v}} \Big| \geq 
2\delta\|\mathbf{v}\|_2 \r)\geq 2Q$. It remains to bound $Pr\l( \l| \dotp{\widetilde{\mathbf{x}}_i - \mathbf{x}_i}{\mathbf{v}}  \r|\geq\delta\|\mathbf{v}\|_2 \r)$ from above. To this point, let $\mathcal{P}_{\mathbf{v}}\mathbf{x}$ be the orthogonal projection of a vector $\mathbf{x}\in\mathbb{R}^d$ onto the non-zero coordinates of $\mathbf{v}$. Then, by Holder's inequality, we have
\begin{align*}
Pr\l( \l| \dotp{\widetilde{\mathbf{x}}_i - \mathbf{x}_i}{\mathbf{v}}  \r|\geq\delta\|\mathbf{v}\|_2 \r)
\leq& Pr\l(  \|\mc{P}_{\mf v}(\widetilde{\mathbf{x}}_i - \mathbf{x}_i)\|_{\infty}\|\mathbf{v}\|_1  \geq\delta\|\mathbf{v}\|_2 \r)\\
=& Pr\l(  \| \mc{P}_{\mf v} (\widetilde{\mathbf{x}}_i - \mathbf{x}_i)\|_{\infty}  \geq\delta\frac{\|\mathbf{v}\|_2}{\|\mathbf{v}\|_1} \r)\\
\leq& Pr\l( \| \mc{P}_{\mf v}\mathbf{x}_i \|_{\infty}>\tau \r),
\end{align*}
where the last inequality follows from the definition of $\wt{\mf x}_i$ in \eqref{eq:trunc1} that if every entry of $\mc{P}_{\mf v}\mathbf{x}_i $ is bounded by $\tau$, then
$\mc{P}_{\mf v}\mathbf{x}_i = \mc{P}_{\mf v}\wt{\mathbf{x}}_i $. Furthermore,
\begin{multline*}
Pr\l( \| \mc{P}_{\mf v}\mathbf{x}_i \|_{\infty}>\tau \r)
\leq Pr\l( \l(\sum_{j\in\mc G_{\mf v}}x_{ij}^4\r)^{\frac14}> \tau \r)
= Pr\l( \sum_{j\in\mc G_{\mf v}}x_{ij}^4>\tau^4\r)\\
\leq\frac{\expect{\sum_{j\in\mc \nu G_{\mf v}}x_{ij}^4}}{\tau^4}
\leq \frac{s_0\nu\log (ed) }{N},
\end{multline*}
where the second from the last inequality follows from Markov inequality and the last inequality follows from the definition of $\tau = (N/\log (ed))^{1/4}$ and  the assumption that $\expect{x_{ij}^4}\leq \nu$. Since $N\geq \frac{\nu}{Q}s_0\log (ed) $ by assumption, we have $Pr\l( \| \mc{P}_{\mf v}\mathbf{x}_i \|_{\infty}>\tau \r)\geq Q$ and the proof is finished.
\end{proof}

\subsection{Computing the critical radiuses}
We set $\Lambda_Q := \delta^2Q/4$, $\Lambda_M:= \delta^2Q/64$, $\Lambda_{\mc V}:=  \delta^2Q/64$ and
 $\frac{\delta^2Q}{8}\frac{r(\rho)^2}{\rho}\leq \lambda\leq \frac{5\delta^2Q}{32}\frac{r(\rho)^2}{\rho}$. Then, we have $\Lambda_Q> 2(\Lambda_M+\Lambda_{\mc{V}})+\frac{5\delta^2Q}{32}$ 
and $\frac{\delta^2Q}{8}\geq4(\Lambda_M+\Lambda_{\mc{V}})$, satisfying the assumptions in Theorem \ref{thm:master}. We aim to bound the critical radiuses $r_{\mathcal{Q}},~r_{\mathcal{M}},~r_{\mc V}$ and show there exists $\rho>0$ such that $\Delta(\theta_*,\rho)\geq\frac34\rho$. 

\subsubsection{Bounding the radius $r_{\mathcal{Q}}$}

The following useful lower bound on the random quadratic form comes from \cite{lecue2017sparse}. Lower bounds of this sort via Maurey's empirical method originate from \cite{oliveira2013lower}.

\begin{lemma}[Lemma 2.7 of \cite{lecue2017sparse}]\label{lem:quad-form}
Let $\bold{\Gamma}: \mathbb{R}^{d}\rightarrow\mathbb{R}^N$. Let $s_0$ be a positive integer such that $1< s_0\leq d$. Assume for any $\mathbf{v}\in\mathcal{G}_{s_0}$, 
$\l\| \bold{\Gamma}\mathbf{v} \r\|_2\geq \xi\|\mathbf{v}\|_2$ for some absolute constant $\xi>0$. If $\mathbf{x}\in\mathbb{R}^d$ is a non-zero vector and $\mu_j = |x_j|/\|\mathbf{x}\|_1$, then,
\[
\l\| \bold{\Gamma}\mathbf{x} \r\|_2^2\geq \xi^2\|\mathbf{x}\|_2^2 - \frac{\|\mathbf{x}\|_1^2}{s_0-1}\l( \sum_{j=1}^d \l\|\bold{\Gamma}\mathbf{e}_j \r\|_2^2\mu_j - \xi^2 \r),
\] 
where $\{\mathbf{e}_j\}_{j=1}^d$ is the standard basis in $\mathbb{R}^d$.
\end{lemma}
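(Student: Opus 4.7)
The plan is to use Maurey's empirical method, which is the classical technique behind bounds of this flavour. The key observation is that the statement is really a convexity-type inequality: it controls $\|\Gamma \mathbf x\|_2^2$ for arbitrary $\mathbf x$ by the assumed bound on $s_0$-sparse vectors together with a correction involving the diagonal terms $\|\Gamma \mathbf e_j\|_2^2$. The idea is to represent $\mathbf x$ as the expectation of a random $1$-sparse vector and then average $s_0$ independent copies to land in $\mathcal G_{s_0}$.

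Concretely, I would first write $\mathbf x = \|\mathbf x\|_1 \sum_{j=1}^d \mu_j\,\sign(x_j)\mathbf e_j$ and define a random vector $Z$ taking value $\|\mathbf x\|_1 \sign(x_j) \mathbf e_j$ with probability $\mu_j$, so that $\mathbb E[Z]=\mathbf x$. Draw i.i.d.\ copies $Z_1,\dots,Z_{s_0}$ and set $\bar Z = \frac{1}{s_0}\sum_{k=1}^{s_0} Z_k$. Since each $Z_k$ is supported on a single coordinate, $\bar Z$ has support of size at most $s_0$, hence $\bar Z \in \mathcal G_{s_0}$ pointwise, and the hypothesis yields
\[
\|\Gamma \bar Z\|_2^2 \;\ge\; \xi^2 \|\bar Z\|_2^2 \qquad \text{almost surely.}
\]
Taking expectations preserves the inequality.

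Next I would compute both sides explicitly using the independence of the $Z_k$'s and the identity $\mathbb E[Z_k]=\mathbf x$. For any linear map $A$, expanding $\|A\bar Z\|_2^2 = \frac{1}{s_0^2}\sum_{k,l}\langle AZ_k,AZ_l\rangle$ and separating the $k=l$ and $k\neq l$ parts gives
\[
\mathbb E\|A\bar Z\|_2^2 \;=\; \frac{s_0-1}{s_0}\|A\mathbf x\|_2^2 + \frac{1}{s_0}\,\mathbb E\|AZ\|_2^2.
\]
Applying this with $A=\Gamma$ produces $\mathbb E\|\Gamma Z\|_2^2 = \|\mathbf x\|_1^2 \sum_j \mu_j \|\Gamma \mathbf e_j\|_2^2$, while applying it with $A=I$ gives $\mathbb E\|Z\|_2^2 = \|\mathbf x\|_1^2\sum_j \mu_j = \|\mathbf x\|_1^2$ (the signs drop out because only $\|\mathbf e_j\|_2=1$ enters).

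Finally I would plug these into $\mathbb E\|\Gamma \bar Z\|_2^2 \ge \xi^2\,\mathbb E\|\bar Z\|_2^2$, obtaining
\[
\frac{s_0-1}{s_0}\|\Gamma \mathbf x\|_2^2 + \frac{\|\mathbf x\|_1^2}{s_0}\sum_j \mu_j \|\Gamma \mathbf e_j\|_2^2 \;\ge\; \xi^2\Bigl(\frac{s_0-1}{s_0}\|\mathbf x\|_2^2 + \frac{\|\mathbf x\|_1^2}{s_0}\Bigr),
\]
and multiplying through by $s_0/(s_0-1)$ and rearranging yields the claimed inequality. There is no real analytic obstacle here; the only delicate point is selecting the randomization so that $\bar Z$ is automatically $s_0$-sparse (which dictates averaging exactly $s_0$ basis-vector-valued atoms) and keeping the signs so that $\mathbb E[Z]=\mathbf x$ rather than its absolute-value twin. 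The denominator $s_0-1$ in the conclusion is a direct artifact of the $(s_0-1)/s_0$ coefficient appearing in the cross-term expansion, which is why the hypothesis $s_0>1$ is needed.
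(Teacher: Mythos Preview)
Your argument is correct and is precisely Maurey's empirical method, which is the approach the paper explicitly attributes to this lemma (it cites \cite{lecue2017sparse} for the statement and notes that ``lower bounds of this sort via Maurey's empirical method originate from \cite{oliveira2013lower}''); the paper itself does not supply an independent proof. The only cosmetic point is that $\bar Z$ is at most $s_0$-sparse rather than exactly $s_0$-sparse, but the hypothesis on $\mathcal G_{s_0}$ extends to such vectors by continuity (or by padding with vanishing entries), so this causes no difficulty.
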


Let $\wt{\mf{\Gamma}} := \l[\wt{\mf x}_1,~\wt{\mf x}_2,\cdots,~\wt{\mf x}_N\r]^T/\sqrt{N}$.
To use this lemma, we need to deduce a lower bound for $\inf_{\mathbf{v}\in\mathcal{G}_{s_0}}\l\| \widetilde{\bold{\Gamma}}\mathbf{v} \r\|_2^2$ as well as an upper bound for 
$\max_{1\leq j\leq d}\l\|  \widetilde{\bold{\Gamma}}\mathbf{e}_j \r\|_2^2$. The former is bounded via a book-keeping VC dimension argument, and the latter is bounded via a subgaussian concentration bound for a thresholded process.

\begin{lemma}\label{lem:VC}
Suppose $N\geq \frac{\nu}{Q}s_0\log(ed) $, then,
with probability at least $1-c\exp(-\beta)$,
\[
\inf_{\mathbf{v}\in\mathcal{G}_{s_0}\cap\mathbb{S}^{d-1}}\l\| \widetilde{\bold{\Gamma}}\mathbf{v} \r\|_2^2\geq \delta^2\l( Q - L\sqrt{\frac{s_0\log(ed)}{N}} - \sqrt{\frac{\beta}{N}} \r),
\]
where $L,c>0$ are absolute constants.
\end{lemma}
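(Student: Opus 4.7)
My plan is to reduce the quadratic form lower bound to a uniform lower bound on an indicator-type empirical process, and then leverage the truncated small-ball estimate from Lemma \ref{lem:weak-small-ball} together with a standard VC-type bound on $s_0$-sparse halfspaces.

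First, I would start from the trivial pointwise inequality
\[
\langle \widetilde{\mathbf{x}}_i,\mathbf{v}\rangle^2 \ge \delta^2 \mathbf{1}\bigl\{|\langle \widetilde{\mathbf{x}}_i,\mathbf{v}\rangle|\ge \delta\bigr\},
\qquad \mathbf{v}\in\mathbb{S}^{d-1},
\]
so that for every $\mathbf{v}\in\mathcal{G}_{s_0}\cap\mathbb{S}^{d-1}$,
\[
\|\widetilde{\boldsymbol{\Gamma}}\mathbf{v}\|_2^2 \ge \frac{\delta^2}{N}\sum_{i=1}^N \mathbf{1}\bigl\{|\langle \widetilde{\mathbf{x}}_i,\mathbf{v}\rangle|\ge \delta\bigr\}.
\]
Since Lemma \ref{lem:weak-small-ball} applies under the hypothesis $N\ge \frac{\nu}{Q}s_0\log(ed)$ and yields $\mathbb{E}\mathbf{1}\{|\langle \widetilde{\mathbf{x}},\mathbf{v}\rangle|\ge \delta\}\ge Q$ for every $\mathbf{v}\in\mathcal{G}_{s_0}\cap\mathbb{S}^{d-1}$, it suffices to control the uniform deviation
\[
\Delta_N := \sup_{\mathbf{v}\in\mathcal{G}_{s_0}\cap\mathbb{S}^{d-1}}\Bigl| \frac{1}{N}\sum_{i=1}^N \mathbf{1}\{|\langle \widetilde{\mathbf{x}}_i,\mathbf{v}\rangle|\ge \delta\} - \mathbb{E}\mathbf{1}\{|\langle \widetilde{\mathbf{x}},\mathbf{v}\rangle|\ge \delta\} \Bigr|.
\]

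The next step is to bound $\Delta_N$ via VC theory. For each subset $S\subseteq\{1,\dots,d\}$ with $|S|=s_0$, the class
$\mathcal{F}_S := \{\mathbf{1}\{|\langle \widetilde{\mathbf{x}},\mathbf{v}\rangle|\ge \delta\} : \mathrm{supp}(\mathbf{v})\subseteq S,\ \|\mathbf{v}\|_2=1\}$
is contained in the class of indicators of symmetric slabs in $\mathbb{R}^{s_0}$ applied to the $s_0$-dimensional vector $(\widetilde{x}_j)_{j\in S}$, and hence has VC dimension at most $O(s_0)$. Taking unions, the full class $\mathcal{F} := \bigcup_{|S|=s_0}\mathcal{F}_S$ has shatter coefficients bounded by $\binom{d}{s_0}\cdot (eN)^{O(s_0)}\le (eN)^{O(s_0\log(ed/s_0))}$. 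A standard Vapnik--Chervonenkis / bounded-differences argument (symmetrization plus Massart's finite class lemma for the expected supremum, then McDiarmid for the deviation) then yields constants $L,c>0$ such that
\[
\Delta_N \le L\sqrt{\frac{s_0\log(ed)}{N}} + \sqrt{\frac{\beta}{N}}
\]
with probability at least $1-c e^{-\beta}$. Combining this with $\mathbb{E}\mathbf{1}\{\cdot\}\ge Q$ and the inequality at the beginning gives exactly the stated bound.

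The only nontrivial step is bounding the VC/shattering complexity of the class $\mathcal{F}$ uniformly over all $s_0$-sparse directions; once that is in place the concentration is a direct application of textbook empirical process theory. I would expect the cleanest write-up to invoke a single off-the-shelf VC concentration inequality (for example Bousquet's version of Talagrand's inequality for uniformly bounded function classes) to absorb both the expected deviation and the $\sqrt{\beta/N}$ sub-Gaussian tail into one statement.
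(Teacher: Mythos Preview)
Your proposal is correct and follows essentially the same approach as the paper: reduce the quadratic form to an indicator empirical process via the truncated small-ball Lemma~\ref{lem:weak-small-ball}, then control the uniform deviation over $s_0$-sparse directions by a VC argument combined with bounded differences. The only minor difference is that the paper carries out the VC step via Dudley's entropy integral together with Haussler's covering-number bound (and bounds the VC dimension of the class $\{|\langle\cdot,\mathbf v\rangle|\ge\delta\}$ by $O(s_0\log s_0)$ using the union-of-two-halfspaces trick), which cleanly avoids the spurious $\log N$ factor that a direct Massart-on-growth-function argument can introduce; otherwise the skeleton is identical.
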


\begin{proof}[Proof of Lemma \ref{lem:VC}]
First of all, by Lemma \ref{lem:weak-small-ball}, for any $i\in\{1,2,\cdots,N\}$ and $\mathbf{v}\in\mathcal{G}_{s_0}\cap\mathbb{S}^{d-1}$, we have
\[
\expect{\mathbf{1}_{  \l\{ \l|\dotp{\widetilde{\mathbf{x}}_i}{\mathbf{v}}\r| \geq\delta \r\}  }} = Pr\l( \Big| \dotp{\widetilde{\mathbf{x}}_i}{\mathbf{v}} \Big| \geq \delta\|\mathbf{v}\|_2 \r) \geq Q.
\]
Let $\widetilde{\mathbf{x}}_1^N:= \l[ \widetilde{\mathbf{x}}_1,\cdots, \widetilde{\mathbf{x}}_N \r]$, and
define the following process parametrized by $\mathbf{v}\in\mathcal{G}_{s_0}\cap\mathbb{S}^{d-1}$:
\[
R\l( \widetilde{\mathbf{x}}_1^N,\mathbf{v} \r) = \frac1N\sum_{i=1}^N\mathbf{1}_{  \l\{ \l|\dotp{\widetilde{\mathbf{x}}_i}{\mathbf{v}}\r| \geq\delta/2 \r\}  }
- \expect{\mathbf{1}_{  \l\{ \l|\dotp{\widetilde{\mathbf{x}}_i}{\mathbf{v}}\r| \geq\delta/2 \r\}  }},
\]
and we aim to bound the following supremum
\[
\sup_{\mathbf{v}\in\mathcal{G}_{s_0}\cap\mathbb{S}^{d-1}}\l| R\l( \widetilde{\mathbf{x}}_1^N,\mathbf{v} \r) \r|.
\]
Define the following class of indicator functions:
\[
\mathcal{F} := \l\{ \mathbf{1}_{\l\{ |\dotp{\cdot}{\mathbf{v}}| \geq \delta/2\r\}}, ~\mathbf{v}\in \mathcal{G}_{s_0}\cap\mathbb{S}^{s_0-1}  \r\},
\]
By the standard symmetrization argument and then Dudley's entropy estimate (see, for example, \cite{van1996weak} for details of VC theory), we have
\begin{equation}\label{inter-2-1}
\expect{\sup_{\mathbf{v}\in\mathcal{G}_{s_0}\cap\mathbb{S}^{d-1}}\l| R\l( \widetilde{\mathbf{x}}_1^N,\mathbf{v} \r) \r|} 
\leq \frac{C_0}{\sqrt{N}}\int_0^2\sqrt{\log  \mathcal{N}\l( \varepsilon,\mathcal{F}, \|\cdot\|_{L_2(\mu_N)} \r) }d\varepsilon,
\end{equation}
where $C_0$ is a constant and $\mathcal{N}\l( \varepsilon,\mathcal{F}, \|\cdot\|_{L_2(\mu_N)} \r)$ is the $\varepsilon$-covering number of $\mathcal{F}$ under the norm 
$\|f-g\|_{L_2(\mu_N)}:= \sqrt{\frac{1}{N}\sum_{i=1}^N(f(\mathbf{x}_i) - g(\mathbf{x}_i))^2}$.

Consider, without loss of generality, a particular subspace $K_{s_0}$ of $\mathbb{R}^d$ consisting of all vectors whose first $s_0$ coordinates are non-zero. Note that for any fixed number $c\in \mathbb{R}$, the VC dimension of the set of halfspaces
$\mathcal{H}:=\{\dotp{\cdot}{\mathbf{v}}\geq c, ~\mathbf{v}\in K_{s_0}\cap\mathbb{S}^{s_0-1} \}$ is $VC(\mathcal{H}) = s_0$. Thus, by classical VC theorem, for any distinctive $p$ points in $\mathbb{R}^d$, the number distinctive projections from $\mathcal{H}$ to these $p$ points is $\sum_{i=0}^{s_0}{p \choose i}\leq (p+1)^{s_0}$.
Furthermore, any set in $\mathcal{H}':=\{|\dotp{\cdot}{\mathbf{v}}|\geq c, ~\mathbf{v}\in K_{s_0}\cap\mathbb{S}^{s_0-1} \}$ is the intersection of two sets in $\mathcal{H}$,
thus, the number of distinctive projections from $\mathcal{H}'$ to those $p$ points is at most 
$${(p+1)^{s_0} \choose 2}\leq \frac{e^2(p+1)^{2s_0}}{4}\leq 2(p+1)^{2s_0}.$$
This implies $VC(\mathcal{H}')\leq cs_0\log(s_0)$ for some absolute constant $c>0$.

Thus, the following class of indicator functions 
\[
\mathcal{F}_{\delta,K_{s_0}} := \l\{ \mathbf{1}_{\l\{ |\dotp{\cdot}{\mathbf{v}}| \geq \delta\r\}}, ~\mathbf{v}\in K_{s_0}\cap\mathbb{S}^{s_0-1} \r\}
\]
has VC dimension $VC(\mathcal{F}_{\delta,K_{s_0}})\leq cs_0\log(s_0)$. By Haussler's inequality, we have the $\varepsilon$ covering number of $\mathcal{F}_{\delta,K_{s_0}}$ can be bounded as
\begin{align*}
\mathcal{N}\l( \varepsilon,\mathcal{F}_{\delta,K_{s_0}}, \|\cdot\|_{L_2(\mu_N)} \r) 
\leq Cs_0(16e)^{cs_0\log(s_0)}\varepsilon^{-2cs_0\log(s_0)},
\end{align*}
where $C>0$ is an absolute constant.
Furthermore, $\mathcal{F}$ is the union of ${ d \choose s_0}$ different subspaces $K_{s_0}$. Thus, the $\varepsilon$ covering number of $\mathcal{F}$ can be bounded as
 \begin{align*}
 \mathcal{N}\l( \varepsilon,\mathcal{F}, \|\cdot\|_{L_2(\mu_N)} \r)
 &\leq { d \choose s_0}Cs_0(16e)^{cs_0\log(s_0)}\varepsilon^{-2cs_0\log(s_0)}\\
 &\leq \l(ed/s_0\r)^{s_0}Cs_0(16e)^{cs_0\log(s_0)}\varepsilon^{-2cs_0\log(s_0)}.
 \end{align*}
 Substituting this bound into \eqref{inter-2-1} gives 
 \begin{align*}
 \expect{\sup_{\mathbf{v}\in\mathcal{G}_{s_0}\cap\mathbb{S}^{d-1}}\l| R\l( \widetilde{\mathbf{x}}_1^N,\mathbf{v} \r) \r|}
\leq  L\sqrt{s_0\log(ed)/N},
 \end{align*}
 for some absolute constant $L>0$. By bounded difference inequality, we have
 \begin{align*}
 \sup_{\mathbf{v}\in\mathcal{G}_{s_0}\cap\mathbb{S}^{d-1}}\l| R\l( \widetilde{\mathbf{x}}_1^N,\mathbf{v} \r) \r| 
 \leq \expect{\sup_{\mathbf{v}\in\mathcal{G}_{s_0}\cap\mathbb{S}^{d-1}}\l| R\l( \widetilde{\mathbf{x}}_1^N,\mathbf{v} \r) \r|} 
 + \sqrt{u/N},
 \end{align*}
 with probability at least $1-ce^{-u}$ for some constant $c>0$ any $u\geq0$, which implies 
 \begin{align*}
 \inf_{\mathbf{v}\in \mathcal{G}_{s_0}\cap\mathbb{S}^{d-1}}\frac1N\sum_{i=1}^N\mathbf{1}_{  \l\{ \l|\dotp{\widetilde{\mathbf{x}}_i}{\mathbf{v}}\r| \geq\delta/2 \r\}  }
 \geq Q/2 - L\sqrt{s_0\log(ed)/N} - \sqrt{u/N},
 \end{align*}
 with probability at least $1-ce^{-u}$. This implies the claim of the lemma.
\end{proof}

\begin{lemma}\label{lem:upper-1}
For any $\beta\geq1$ chosen by the thresholding parameter $\tau$, we have with probability at least $1-e^{-\beta}$,
\[
\max_{1\leq j\leq d}\l\|  \widetilde{\bold{\Gamma}}\mathbf{e}_j \r\|_2^2 \leq \sqrt{\nu} + C (\sqrt{\nu}+1)\beta\sqrt{\frac{\log (ed)}{N}}, 
\]
where $C>0$ is an absolute constant.
\end{lemma}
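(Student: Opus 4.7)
The plan is to apply a one-sided Bernstein inequality coordinate-wise and then take a union bound over $j\in\{1,\ldots,d\}$. Write
\[
\|\widetilde{\bold{\Gamma}}\mathbf{e}_j\|_2^2 \;=\; \frac{1}{N}\sum_{i=1}^N \widetilde{x}_{ij}^2,
\]
and treat the summands as bounded, heavy-tailed random variables on which classical sub-exponential concentration can be used.

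First I would collect the two inputs Bernstein needs. By the definition of the truncation \eqref{eq:trunc1}, $\widetilde{x}_{ij}^2 \leq \tau^2 = \sqrt{N/\log(ed)}$ almost surely. By monotonicity of the truncation and Cauchy--Schwarz, together with Assumption \ref{assumption:moment} applied to $\mathbf{v}=\mathbf{e}_j$, the mean satisfies $\mathbb{E}[\widetilde{x}_{ij}^2]\leq \mathbb{E}[x_{ij}^2]\leq \sqrt{\nu}$, and the variance is bounded by $\mathbb{E}[\widetilde{x}_{ij}^4]\leq \mathbb{E}[x_{ij}^4]\leq \nu$.

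Next I would invoke the standard Bernstein deviation: for each fixed $j$ and each $u\geq 0$,
\[
\mbox{Pr}\!\left(\tfrac{1}{N}\sum_{i=1}^N \widetilde{x}_{ij}^2 - \mathbb{E}\widetilde{x}_{ij}^2 \;\geq\; C_1\sqrt{\tfrac{\nu u}{N}} + C_1\tfrac{u\tau^2}{N}\right) \;\leq\; e^{-u}.
\]
Choosing $u=\beta\log(ed)$ makes the right-hand side at most $e^{-\beta}d^{-\beta}\leq e^{-\beta}/d$ for any $\beta\geq 1$, so a union bound over the $d$ coordinates leaves the total failure probability at most $e^{-\beta}$.

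Finally I would simplify the deviation. Substituting $\tau^2=\sqrt{N/\log(ed)}$ turns the sub-exponential term $u\tau^2/N$ into $\beta\sqrt{\log(ed)/N}$, and the variance term becomes $\sqrt{\nu\beta\log(ed)/N}\leq \sqrt{\nu}\,\beta\sqrt{\log(ed)/N}$ using $\sqrt{\beta}\leq\beta$ for $\beta\geq 1$. Adding the mean bound $\sqrt{\nu}$ yields the claimed inequality with an absolute constant $C$. No single step is delicate; the main bookkeeping is to verify that the specific choice $\tau=(N/\log(ed))^{1/4}$ balances the two terms of Bernstein so that the final bound scales as $\beta\sqrt{\log(ed)/N}$ rather than $\beta\log(ed)/\sqrt{N}$.
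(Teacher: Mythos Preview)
Your proposal is correct and essentially identical to the paper's own proof: both apply Bernstein's inequality to the bounded summands $\widetilde{x}_{ij}^2$ with variance proxy $\nu$ and range $\tau^2=\sqrt{N/\log(ed)}$, then set $u=\beta\log(ed)$ and take a union bound over $j\in\{1,\ldots,d\}$. The only cosmetic difference is that the paper leaves the deviation as $\sqrt{2\nu t/N}+t/\sqrt{N\log(ed)}$ without explicitly absorbing the $\sqrt{\beta}$ into $\beta$, whereas you carry out that simplification.
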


\begin{proof}[Proof of Lemma \ref{lem:upper-1}]
By Bernstein's inequality, we have for any $t\geq0$,
\begin{align*}
Pr\l( \l| \frac1N\sum_{i=1}^N\widetilde{x}_{ij}^2 - \expect{\widetilde{x}_{ij}^2} \r| \geq C\l( \sqrt{\frac{2\sigma_j^2 t}{N}} + \frac{bt}{N} \r) \r)
\leq \exp(-t),
\end{align*}
where for i.i.d. measurements $\mf x_1,~\mf x_2,~\cdots,~\mf x_N$,
$$\sigma_j^2 = \expect{\l(\widetilde{x}_{ij}^2-\expect{\wt x_{ij}^2}\r)^2}\leq \expect{| x_{ij}|^4}\leq \sup_{\mathbf{v}\in\mathbb{S}^{d-1}} \expect{\l| \dotp{\mathbf{v}}{\mathbf{X}_i} \r|^4}\leq \nu,$$ 
$b = \tau^2 = \sqrt{\frac{N}{\log (ed)}}$, and $\expect{\widetilde{x}_{ij}^2}\leq \expect{|\widetilde{x}_{ij}|^4}^{1/2}\leq \sqrt{\nu}$. Thus, it follows for any $j\in\{1,2,\cdots,d\}$,
\[
 \frac1N\sum_{i=1}^N\widetilde{x}_{ij}^2  \leq \sqrt{\nu} + C\l(\sqrt{\frac{2\nu t}{N}} + \frac{t}{\sqrt{N\log (ed)}} \r),
\]
with probability at least $1-\exp(-t)$. Take a union bound over $j\in\{1,2,\cdots,d\}$ and let $t=\beta \log (ed) $ give
\[
 \max_{1\leq j\leq d} \frac1N\sum_{i=1}^N\widetilde{x}_{ij}^2\leq \sqrt{\nu} + C (\sqrt{\nu}+1)\beta\sqrt{\frac{\log (ed)}{N}},
\]
with probability at least $1-e^{-\beta}$,
for some absolute constant $C>0$. This finishes the proof.
\end{proof}

Combining the preceding three lemmas gives 
\begin{lemma}\label{lem:bound-Q}
Suppose $N\geq C_0\l( \frac{s_0}{Q^2} + \frac{\nu+1}{\nu} \r)\beta^2\log(ed)+ \frac{\nu}{Q}s_0\log(ed)$ for some absolute constant $C_0>0$, and $s_0 = \frac{8c_0\sqrt{\nu}}{\delta^2Q}s\leq d$ for some absolute constant $c_0>0$, then,
\[
r_{\mathcal{Q}}\leq\sqrt{\frac{2}{c_0s}}\rho
\]
when taking $p_{\mathcal{Q}} = ce^{-\beta}$ in the definition of $r_{\mathcal{Q}}$ for $\beta\geq1$.
\end{lemma}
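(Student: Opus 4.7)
The plan is to invoke the deterministic Maurey-type comparison of Lemma \ref{lem:quad-form} for $\bold\Gamma = \wt{\bold\Gamma}$. Observe that $\mc P_N\mc Q_{\mathbf v} = \|\wt{\bold\Gamma}\mathbf v\|_2^2$, so in order to bound $r_{\mc Q}$ it suffices to show that for $\mathbf v = \theta-\eta\theta_*$ with $\|\mathbf v\|_2 = r$ and $\|\mathbf v\|_1\leq\rho$, the right-hand side of Lemma \ref{lem:quad-form} exceeds $\Lambda_Q r^2 = \tfrac{\delta^2 Q}{4}r^2$ uniformly, on a single high-probability event. The two ingredients needed are the restricted eigenvalue-type lower bound $\xi$ and the uniform bound on the diagonal entries $\|\wt{\bold\Gamma}\mathbf e_j\|_2^2$, which are supplied exactly by Lemmas \ref{lem:VC} and \ref{lem:upper-1} respectively.

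First, by Lemma \ref{lem:VC}, with probability at least $1-ce^{-\beta}$,
\[
\inf_{\mathbf u\in\mc G_{s_0}\cap\mb S^{d-1}}\|\wt{\bold\Gamma}\mathbf u\|_2^2 \geq \delta^2\l(Q - L\sqrt{s_0\log(ed)/N} - \sqrt{\beta/N}\r).
\]
Under the sample size hypothesis $N\geq C_0\bigl(s_0/Q^2 + (\nu+1)/\nu\bigr)\beta^2\log(ed) + (\nu/Q)s_0\log(ed)$, taking $C_0$ large enough forces each of the two subtractive terms to be at most $Q/4$, so we may set $\xi^2 = \delta^2 Q/2$. Second, by Lemma \ref{lem:upper-1}, with probability at least $1-e^{-\beta}$,
\[
\max_{1\leq j\leq d}\|\wt{\bold\Gamma}\mathbf e_j\|_2^2 \leq \sqrt\nu + C(\sqrt\nu+1)\beta\sqrt{\log(ed)/N},
\]
and the same sample size condition bounds the right-hand side by $2\sqrt\nu$. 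Since $\mu_j\geq 0$ and $\sum_j\mu_j = 1$, the weighted sum in Lemma \ref{lem:quad-form} is also $\leq 2\sqrt\nu$.

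Substituting these bounds into Lemma \ref{lem:quad-form} with $\mathbf x = \mathbf v$, $\|\mathbf v\|_2 = r$, $\|\mathbf v\|_1\leq\rho$, gives
\[
\|\wt{\bold\Gamma}\mathbf v\|_2^2 \geq \frac{\delta^2 Q}{2}\,r^2 - \frac{2\sqrt\nu\,\rho^2}{s_0-1}.
\]
The prescribed choice $s_0 = 8c_0\sqrt\nu\, s/(\delta^2 Q)$ (assuming $s_0\geq 2$ so that $s_0-1\geq s_0/2$) bounds the correction term by $\tfrac{\delta^2 Q\,\rho^2}{2c_0 s}$. Consequently, for any $r\geq\sqrt{2/(c_0 s)}\,\rho$ this correction is at most $\tfrac{\delta^2 Q}{4}r^2$, and therefore $\|\wt{\bold\Gamma}\mathbf v\|_2^2\geq \tfrac{\delta^2 Q}{4}r^2 = \Lambda_Q r^2$. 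Since the two high-probability events combine with failure probability at most a constant multiple of $e^{-\beta}$, the definition of $r_{\mc Q}$ yields the claimed bound $r_{\mc Q}\leq\sqrt{2/(c_0 s)}\rho$.

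There is no real analytic obstacle here: the three preceding lemmas supply every random ingredient, and the proof amounts to calibrating the absolute constants in the sample size condition so that the error terms in Lemmas \ref{lem:VC} and \ref{lem:upper-1} are dominated, and then choosing $s_0$ proportional to $\sqrt\nu\, s/(\delta^2 Q)$ so that the Maurey correction term in Lemma \ref{lem:quad-form} is absorbed into $\Lambda_Q r^2$ precisely above the threshold $r = \sqrt{2/(c_0 s)}\rho$.
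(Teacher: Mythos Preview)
Your proposal is correct and follows essentially the same approach as the paper's proof: both combine Lemma~\ref{lem:VC} (to obtain $\xi^2=\delta^2 Q/2$) and Lemma~\ref{lem:upper-1} (to bound the diagonal entries by $2\sqrt\nu$) inside the deterministic Maurey inequality of Lemma~\ref{lem:quad-form}, then calibrate $s_0$ so that the correction term is absorbed into $\Lambda_Q r^2$ at the threshold $r=\sqrt{2/(c_0 s)}\,\rho$. The only cosmetic difference is that the paper sets the lower bound equal to $\tfrac{\delta^2 Q}{4}r^2$ and solves for $r$, whereas you verify the inequality directly for $r$ above the threshold; these are equivalent.
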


\begin{proof}[Proof of Lemma \ref{lem:bound-Q}]
First of all, by Lemma \ref{lem:VC}, and the assumption $N\geq C_0 \frac{s_0}{Q^2}\beta^2\log(ed)+ \frac{\nu}{Q}s_0\log(ed)$ for some large enough absolute constant $C_0$, we have 
$$\inf_{\mf v\in \mc G_{s_0}\cap \mc S^{d-1}}\l\| \widetilde{\bold{\Gamma}}\mathbf{v} \r\|_2^2\geq\frac{\delta^2Q}{2},$$
with probability at least $1-e^{-\beta}$.
Thus, it follows from Lemma \ref{lem:quad-form} and \ref{lem:upper-1} that
\[
\inf_{\theta\in B_1(\theta_*,\rho)\cap S_2(\theta_*,r)}\mc P_N\mc Q_{\theta-\theta_*}\geq \frac{\delta^2Q}{2}r^2 - \frac{\rho^2}{s_0-1}
\l( \sqrt\nu + C\l(\sqrt{\nu}+1\r)\beta\sqrt{\frac{\log (ed)}{N}} \r),
\]
where $C>0$ is an absolute constant.
By assumption that $N\geq C_0\frac{\nu+1}{\nu}\beta^2\log (ed)$ for some $C_0$ large enough, then,
\[
\inf_{\theta\in B_1(\theta_*,\rho)\cap S_2(\theta_*,r)}\mc P_N\mc Q_{\theta-\theta_*}
\geq \frac{\delta^2Q}{2}r^2 - \frac{2\sqrt{\nu}}{s_0-1}\rho^2\geq \frac{\delta^2Q}{2}r^2 - \frac{4\sqrt{\nu}}{s_0}\rho^2.
\]
Using the assumption that $s_0 = \frac{8c_0\sqrt{\nu}}{\delta^2Q}s$, we obtain 
\[
\inf_{\theta\in B_1(\theta_*,\rho)\cap S_2(\theta_*,r)}\mc P_N\mc Q_{\theta-\theta_*}\geq \frac{\delta^2Q}{2}\l(r^2 - \frac{\rho^2}{c_0s}\r).
\]
The infimum of $r>0$ such that the right hand side is greater than $\frac{\delta^2Q}{4}r^2$ can be obtained by 
letting the right hand side equal to $\frac{\delta^2Q}{4}r^2$ and solve for $r$, which gives $r = \sqrt{\frac{2}{c_0s}}\rho$. It then follows from the definition of $r_{\mathcal{Q}}$ that $r_{\mathcal{Q}}$ must be bounded above by this value.
\end{proof}

\subsubsection{Bounding the radius $r_{\mathcal{M}}$}
The main objective is the following bound on $|\mc P_N \mc M_{\theta-\theta_*}|$:

\begin{lemma}\label{lem:bound-PM}
Suppose $N\geq cs\log(ed)$ and Assumption \ref{assume:sparse-recovery} holds.
For any $\beta,u,v,w>6$, we have with probability at least 
\begin{multline*}
1-2e^{-\beta}-2e^{-v^2}
-c'\l((u^{-q/4}+u^{-q'})(ed)^{-(c-1)}\r.\\
\l.+(eN)^{-\frac{q}{12}+1}(\log(eN))^{q/6}w^{-q/6} +  (eN)^{-\frac{q'}{4}+1}(\log(eN))^{q'/2}w^{-q'}\r).
\end{multline*}
where $c,c'>1$ are absolute constants,
\[
\sup_{\theta\in B_1(\theta_*,\rho)\cap B_2(\theta_*,r)}\l| \mc P_N \mc M_{\theta-\theta_*} \r|\leq C(\nu_q,\xi)\l(wu^2v+w\beta^{3/4}\r)
\sqrt{\frac{\log (ed)}{N}} \l( r\sqrt m  + \rho \r),
\]
for any $m\in\{1,2,\cdots,d\}$,~
where $C(\nu_q,\xi):= C\l(\nu_q^3+\nu_q^{5/2}+\nu_q^{3/2} + \|\xi\|_{L_{q'}}\l( \nu_q +1 \r)\r)$ for some absolute constant $C>0$
\end{lemma}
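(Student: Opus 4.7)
The plan is to exploit the linearity of $\mc{M}_{\mf v}$ in $\mf v$ to reduce the uniform bound to controlling the norm of a single random vector, and then to split $B_1(0,\rho)\cap B_2(0,r)$ into a best-$m$-term approximation plus a spread tail. Writing
\[
W_N := \frac{1}{N}\sum_{i=1}^N \wt{\mf x}_i\l(\wt{y}_i - \dotp{\wt{\mf x}_i}{\theta_*}\r) - \expect{\wt{\mf x}\l(\wt{y} - \dotp{\wt{\mf x}}{\theta_*}\r)},
\]
one has $\mc{P}_N\mc{M}_{\mf v} = \dotp{W_N}{\mf v}$. For any $\mf v\in B_1(0,\rho)\cap B_2(0,r)$ and any $m\in\{1,\dots,d\}$, decompose $\mf v = \mf v^{(1)}+\mf v^{(2)}$ where $\mf v^{(1)}$ is the restriction of $\mf v$ to its $m$ largest coordinates in absolute value. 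Then $\|\mf v^{(1)}\|_0\leq m$, $\|\mf v^{(1)}\|_2\leq r$, $\|\mf v^{(2)}\|_1\leq \rho$, and the two-sided duality gives
\[
|\mc{P}_N\mc{M}_{\mf v}|\leq r\sup_{|S|=m}\|(W_N)_S\|_2 + \rho\|W_N\|_\infty \leq \l(r\sqrt{m}+\rho\r)\|W_N\|_\infty.
\]
Thus everything reduces to a high-probability $\ell_\infty$-bound on $W_N$.

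For each coordinate $j$, the summand $\wt{x}_{ij}\l(\wt{y}_i-\dotp{\wt{\mf x}_i}{\theta_*}\r)$ is deterministically bounded by $\tau(\tau+\tau\|\theta_*\|_1)\leq \tau^2(1+\sqrt{s})$ (using $\|\theta_*\|_1\leq\sqrt{s}$), while its variance is controlled by a constant depending on $\nu_q$ and $\|\xi\|_{L_{q'}}$ after the decomposition $\wt{y}_i-\dotp{\wt{\mf x}_i}{\theta_*} = (\wt{y}_i-y_i)+\xi_i+\dotp{\mf x_i-\wt{\mf x}_i}{\theta_*}$ and Cauchy--Schwarz. A separate bias estimate is needed since $\expect{\wt{\mf x}(\wt{y}-\dotp{\wt{\mf x}}{\theta_*})}\neq \expect{\mf x\xi}=0$: integrating the $(q/2)$-moment tail of $x_{ij}y$ beyond $\tau=(N/\log(ed))^{1/4}$ gives a bias of order $\tau^{-(q/2-2)}$, which with $q>20$ is lower order than $\sqrt{\log(ed)/N}$. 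A Bernstein inequality combined with a union bound over the $d$ coordinates then yields an $\ell_\infty$ control on $W_N$ of order $\sqrt{\log(ed)/N}$ times the moment-dependent constant $C(\nu_q,\xi)$ and a deviation factor.

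The product structure $wu^2v + w\beta^{3/4}$ in the deviation factor reflects a three-level truncation of each summand $\wt x_{ij}\xi'_i$. The Bernstein step on the deterministically bounded part produces the $w\beta^{3/4}$ contribution, with the exponent $3/4$ arising from balancing the subgaussian and linear terms at truncation level $\tau^2\sim\sqrt{N/\log(ed)}$. The $wu^2v$ contribution comes from the ``moderate-deviation'' pieces not killed by the deterministic bound: $u$ indexes a Markov truncation on $x_{ij}$ (giving the polynomial tail $u^{-q/4}$), $w$ the analogous truncation on $y_i$ and $\xi_i$ (giving $w^{-q/6}$ and $w^{-q'}$), and $v$ a subgaussian tail on the truncated bulk (giving $e^{-v^2}$); the condition $N\geq cs\log(ed)$ ensures the mixing term $\dotp{\mf x_i-\wt{\mf x}_i}{\theta_*}\sqrt{s}$ remains lower order. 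The main obstacle is the careful simultaneous bookkeeping of these truncation events: in particular, bounding the bias and the variance of the ``noise-truncation'' term $\dotp{\mf x_i-\wt{\mf x}_i}{\theta_*}$ uniformly in $\theta_*$ without losing the $\sqrt{\log(ed)/N}$ rate requires combining the sparsity bound $\|\theta_*\|_1\leq\sqrt s$ with the $(q/2)$-moment bound on $x_{ij}$, and this accounts for the assumption $q>20$ and $q'>5$ rather than merely $q>4$.
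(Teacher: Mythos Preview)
Your reduction in the first paragraph---writing $\mc{P}_N\mc{M}_{\mf v}=\dotp{W_N}{\mf v}$ and bounding the supremum over $B_1(0,\rho)\cap B_2(0,r)$ by $\|W_N\|_\infty(r\sqrt{m}+\rho)$---is correct and is exactly the paper's first move (after symmetrization, with $W_N$ replaced by the Rademacher average $\mf z=\frac{1}{N}\sum_i\varepsilon_i(\wt y_i-\dotp{\wt{\mf x}_i}{\theta_*})\wt{\mf x}_i$).

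The gap is in your control of $\|W_N\|_\infty$. A direct Bernstein inequality does not apply: after the decomposition $\wt y_i-\dotp{\wt{\mf x}_i}{\theta_*}=(\wt y_i-y_i)+\xi_i+\dotp{\mf x_i-\wt{\mf x}_i}{\theta_*}$, none of the three summands $\wt x_{ij}\cdot(\text{piece})$ is deterministically bounded (the noise $\xi_i$ has only $q'>5$ moments; $|\wt y_i-y_i|$ and $|\dotp{\mf x_i-\wt{\mf x}_i}{\theta_*}|$ vanish only on the high-probability event that the truncation does not activate). Your proposed sup bound $\tau^2(1+\sqrt s)$ is on the \emph{un}decomposed summand, and plugging it into Bernstein's linear term yields $(1+\sqrt{s})\sqrt{\log(ed)/N}$ after the union bound over $d$ coordinates---a factor $\sqrt{s}$ too large. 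The ``three-level truncation'' you sketch is not specified enough to see how either issue is repaired, and your attribution of the parameters does not match the mechanism that actually produces them.

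The paper's route is substantially different. It symmetrizes, decomposes the multiplier as above, and for each coordinate applies the Montgomery--Smith inequality to the resulting Rademacher sum: with probability $1-e^{-v^2}$,
\[
\Bigl|\sum_{i=1}^N\varepsilon_i a_i\Bigr|\;\leq\;\sum_{i\leq k}|a_i^\sharp|+v\Bigl(\sum_{i>k}|a_i^\sharp|^2\Bigr)^{1/2},
\]
where $(a_i^\sharp)$ is the nonincreasing rearrangement and $k=\lfloor c\log(ed)/\log(eN/c\log(ed))\rfloor$. This is the origin of $e^{-v^2}$. Each of the two sums is then split by H\"older into a factor involving the multiplier's order statistics and one involving $(\wt x_{ij}^\sharp)$; the former are controlled by binomial/Markov estimates on $\wt\phi_i^\sharp$ and $\xi_i^\sharp$ (giving the polynomial tails in $u$ and $w$, and the $(eN)^{-q/12+1}$, $(eN)^{-q'/4+1}$ terms), while the latter are controlled by Bernstein on $\sum_{i\leq k}|\wt x_{ij}^\sharp|^p$ with $|\wt x_{ij}|\leq\tau$ (this is where $e^{-\beta}$ enters). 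The $\beta^{3/4}$ exponent comes specifically from the H\"older pairing $(1/4,3/4)$ in the noise term combined with the Bernstein linear term at scale $\tau^{4/3}$, not from balancing the two Bernstein regimes on the full product.
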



\begin{proof}[Proof of Lemma \ref{lem:bound-PM}]
First of all, by symmetrization inequality, it is enough to bound 
\[
\sup_{\theta\in B_1(\theta_*,\rho)\cap B_2(\theta_*,r)}
\l| \frac1N\sum_{i=1}^N\varepsilon_i(\wt y_i - \dotp{\wt{\mf x_i}}{\theta_*})\dotp{\wt{\mf x}_i}{\theta-\theta_*} \r| 
= \sup_{\mf v\in B_1(0,\rho)\cap B_2(0,r)}
\l| \frac1N\sum_{i=1}^N\varepsilon_i(\wt y_i - \dotp{\wt{\mf x}_i}{\theta_*})\dotp{\wt{\mf x}_i}{\mf v} \r|
\]
We define $\mf z := \frac1N\sum_{i=1}^N\varepsilon_i(\wt y_i - \dotp{\wt{\mf x}_i}{\theta_*})\wt{\mf x}_i$. Let $J$ be any group of coordinates in $\{1,2,\cdots,d\}$ with $m$ largest coordinates of $\l\{|z_j|\r\}_{j=1}^N$ for $m\in\{1,2,\cdots,d\}$. Then, it follows
\begin{multline}\label{eq:master-bound-0}
\sup_{\mf v\in B_1(0,\rho)\cap B_2(0,r)}
\dotp{\mf z}{\mf v} \leq \sup_{\mf v\in B_1(0,\rho)\cap B_2(0,r)}\sum_{j\in J}v_jz_j + \sup_{\mf v\in B_1(0,\rho)\cap B_2(0,r)}\sum_{j\in J^c}v_jz_j \\
\leq  \sup_{\mf v\in B_2(0,r)}\sum_{j\in J}v_jz_j +  \sup_{\mf v\in B_1(0,\rho)}\sum_{j\in J^c}v_jz_j
= r \cdot \l(\sum_{j\leq m}\l(z_j^{\sharp}\r)^2 \r)^{1/2} + \rho\cdot \max_{j>m}\l| z_j^\sharp \r|\\
\leq  \max_{j}\l| z_j \r|\cdot \l( r\sqrt m  + \rho \r)
\end{multline}
for any $m$, 
where $\l\{ z_j^{\sharp} \r\}_{j=1}^d$ denotes the non-increasing ordering of $\l\{ |z_j| \r\}_{j=1}^d$. Now for each $|z_j|$,
\begin{equation*}
N|z_j| = \l|\sum_{i=1}^N\varepsilon_i( \wt y_i - \dotp{\wt{\mf x}_i}{\theta_*})\wt x_{ij}  \r|
\leq \l|\sum_{i=1}^N\varepsilon_i(\wt y_i-y_i)\wt x_{ij} \r|  +
\l|\sum_{i=1}^N\varepsilon_i \xi_i \wt x_{ij}  \r| + \l|\sum_{i=1}^N\varepsilon_i\l\langle 
\mf x_i -\wt{\mf x}_i, \theta_* \r\rangle \wt x_{ij}  \r|
\end{equation*}
Thus, it follows 
\begin{multline}\label{eq:decomp-1}
N\cdot\max_{j\in\{1,2,\cdots,d\}}|z_j| \leq \max_{j\in\{1,2,\cdots,d\}}\l|\sum_{i=1}^N\varepsilon_i(\wt y_i-y_i)\wt x_{ij} \r|
+ \max_{j\in\{1,2,\cdots,d\}} \l|\sum_{i=1}^N\varepsilon_i \xi_i \wt x_{ij}  \r| \\
+ \max_{j\in\{1,2,\cdots,d\}}\l|\sum_{i=1}^N\varepsilon_i\l\langle 
\mf x_i -\wt{\mf x}_i, \theta_* \r\rangle \wt x_{ij}  \r|
\end{multline}

Then, we need to bound the three terms on the right hand side of \eqref{eq:decomp-1} separately.

\begin{enumerate}
\item \textbf{Bounding the terms} $\max_{j\in\{1,2,\cdots,d\}}\l|\sum_{i=1}^N\varepsilon_i\l\langle 
\mf x_i -\wt{\mf x}_i, \theta_* \r\rangle \wt x_{ij}  \r|$ \textbf{and}\\
 $\max_{j\in\{1,2,\cdots,d\}}\l|\sum_{i=1}^N\varepsilon_i(\wt y_i-y_i)\wt x_{ij} \r|$:

Let 
$\wt \phi_i = \l\langle \mf x_i -\wt{\mf x}_i, \theta_* \r\rangle$. A usual first step analyzing such a Rademacher sum (see, for example, \citep{mendelson2016upper, goldstein2016structured}) is to apply an inequality from \citep{montgomery1990distribution}, conditioned on $\mf{x}_i$, which results in
\[
\l|\sum_{i=1}^N\varepsilon_i\wt \phi_i \wt x_{ij}  \r|
\leq \sum_{i=1}^k\l|\wt\phi_i^{\sharp}\wt x_{ij}^{\sharp} \r|
+v\l( \sum_{i>k}\l|\wt\phi_i^{\sharp}\wt x_{ij}^{\sharp} \r|^2 \r)^{1/2},
\]
with probability at least $1-e^{-v^2}$, 
where $k$ is any chosen integer within $\l\{0,1,2,\cdots,N \r\}$ and $\l(\wt\phi_i^{\sharp}\r)_{i=1}^N$, $\l(\wt x_{ij}^{\sharp}\r)_{i=1}^N$ are non-increasing rearrangements of  $\l(|\wt\phi_i|\r)_{i=1}^N$, $\l(|\wt x_{ij}|\r)_{i=1}^N$. We define the former sum to be 0 when $k=0$.

By Holder's inequality, we have
\begin{equation*}
\l|\sum_{i=1}^N\varepsilon_i\wt \phi_i \wt x_{ij}  \r|
\leq \l(\sum_{i=1}^k\l|\wt\phi_i^{\sharp}\r|^2\r)^{1/2}\l(\sum_{i=1}^k\l|\wt x_{ij}^{\sharp} \r|^2\r)^{1/2}
+v\l( \sum_{i>k}\l|\wt\phi_i^{\sharp} \r|^{2r} \r)^{1/(2r)} 
\l(\sum_{i>k}\l|\wt x_{ij}^{\sharp}\r|^{2r'}\r)^{1/(2r')},
\end{equation*}
for some positive constants $r,r'$ such that $\frac1r+\frac{1}{r'}=1$. Take a union bound for all $j\in\{1,2,\cdots,d\}$, gives with probsability at least $1-e^{-v^2}$,
\begin{multline}\label{eq:master-bound-2}
\max_{j\in\{1,2,\cdots,d\}}\l|\sum_{i=1}^N\varepsilon_i\wt \phi_i \wt x_{ij}  \r|
\leq \l(\sum_{i=1}^k\l|\wt\phi_i^{\sharp}\r|^2\r)^{1/2}
\max_{j\in\{1,2,\cdots,d\}}\l(\sum_{i=1}^k\l|\wt x_{ij}^{\sharp} \r|^2\r)^{1/2}\\
+v\sqrt{\log d}\l( \sum_{i>k}\l|\wt\phi_i^{\sharp} \r|^{2r} \r)^{1/(2r)} 
\max_{j\in\{1,2,\cdots,d\}}\l(\sum_{i>k}\l|\wt x_{ij}^{\sharp}\r|^{2r'}\r)^{1/(2r')},
\end{multline}

Now we bound the four terms in \eqref{eq:master-bound-2} respectively.

\begin{lemma}\label{lem:supp-11}
Let $k = \lfloor\frac{c\log(ed)}{\log(eN/c\log(ed))}\rfloor$ for some absolute constant $c>1$, and suppose 
$N\geq cs\log(ed)$, then, we have
\[
\l(\sum_{i=1}^k\l|\wt\phi_i^{\sharp}\r|^2\r)^{1/2}
\leq C\nu_q^{3/2}w\sqrt{e\log(ed)},
\]
with probability at least $1 - c' (eN)^{-\frac{q}{12}+1}(\log(eN))^{\frac q6}w^{-\frac q6}$ for any $w>6$ and some absolute constant $C, c'>1$.
\end{lemma}

\begin{proof}[Proof of Lemma \ref{lem:supp-11}]
First of all, using Binomial estimates, we have for any $i$, and any positive constant $c_i$,
\begin{align*}
Pr\l(\l|\wt\phi_i^{\sharp}\r|\geq c_i\|\wt \phi_i\|_{L_p}\r)
&\leq {N \choose i}Pr(\l|\wt\phi_i\r|\geq c_k\|\wt \phi_i\|_{L_p})^i\\
\leq& \l(\frac{eN}{i}\r)^iPr(\l|\wt\phi_i\r|\geq c_k\|\wt \phi_i\|_{L_p})^i\\
\leq&\l(\frac{eN}{i}\r)^i \frac{\expect{\l|\wt\phi_i\r|^p}^i}{c_i^{pi}\l\|\wt\phi_i\r\|_{L_p}^{pi}}
=\l(\frac{eN}{i}\r)^ic_i^{-pi},
\end{align*}
where we define $\l\|\wt\phi_i\r\|_{L_p}:=\expect{\l|\wt\phi_i\r|^p}^{1/p}$ and $p>2$ is a chosen positive constant. Then, we choose $c_i:=\frac{w}{\log(eN/i)}\l(\frac{eN}{i}\r)^{\frac12}$, which implies
\[
Pr\l(\l|\wt\phi_i^{\sharp}\r|\geq \frac{w}{\log(eN/i)}\l(\frac{eN}{i}\r)^{1/2}\|\wt \phi_i\|_{L_p}\r)
\leq \l(\frac{i}{eN}\r)^{i\l(\frac{p}{2}-1\r)}w^{-pi}\l(\log(eN/i)\r)^{pi}.
\]
Thus, it follows,
\begin{multline}\label{eq:inter-111}
\sum_{i=1}^k\l| \wt\phi_i^{\sharp} \r|^2\leq \sum_{i=1}^N\l| \wt\phi_i \r|^2\leq \sum_{i=1}^N\frac{w^2}{(\log(eN/i))^2}\l(\frac{eN}{i}\r)\|\wt \phi_i\|_{L_p}^2\\
\leq w^2\|\wt \phi_i\|_{L_p}^2eN \int_0^N\frac{1}{x(\log(eN)-\log x)^2}dx 
\leq Cw^2\|\wt \phi_i\|_{L_p}^2eN
\end{multline}
with probability at least 
$$
1 - \sum_{i=1}^N\l(\frac{i}{eN}\r)^{i\l(\frac{p}{2}-1\r)}w^{-pi}\l(\log(eN/i)\r)^{pi}.
$$
Note that for $w>6$ and $p$ chosen to be $p := q/6>10/3$, the above sum is a geometrically decreasing sequence, specifically, it is easy to verify that 
$\l(\frac{i}{eN}\r)^{\l(\frac{p}{2}-1\r)}w^{-p}\l(\log(eN/i)\r)^{p}<(6/5)^{-p},~\forall i \in\{1,2,3,4,\cdots,N\}$.
Thus, 
it follows the above probability is at least 
\[
1 - c'\l(eN\r)^{-\l(\frac{p}{2}-1\r)}\l(\log(eN)\r)^{p}w^{-p},
\]
for some absolute constant $c'>1$. Now, we bound the term $\|\wt \phi_i\|_{L_p}$. We choose $p= \frac{q}{6}$. Then, under the condition that $q>20$, $p=\frac q6>2$, and $\expect{|x_{ij}|^{6p}}<\infty,~\forall i\in\{1,2,\cdots,N\},~j\in\{1,2,\cdots,d\}$. Furthermore, without loss of generality, assume the first $s$ coordinates of $\theta_*$ is non-zero. Then, we have
\[
\|\wt \phi_i\|_{L_p} = \|\dotp{\mf x_i-\wt{\mf x}_i}{\theta_*}\|_{L_p}\leq \l\| \sqrt{\sum_{n=1}^s(x_{in}-\wt x_{in})^2}  \r\|_{L_p}
\leq   \sqrt{\sum_{n=1}^s\l\| (x_{in}-\wt x_{in})^2  \r\|_{L_p}},
\]
where the last inequality follows from Jensen's inequality and then triangle inequality. Now, for each $n$, we have
\begin{multline*}
\l\| (x_{in}-\wt x_{in})^2  \r\|_{L_p}\leq\l\| x_{in}^2\cdot1_{\l\{|x_{in}|>\tau\r\}}  \r\|_{L_p} 
\leq \expect{x_{in}^{2p}\cdot1_{\l\{|x_{in}|>\tau\r\}}}^{1/p}\\
\leq \expect{x_{in}^{6p}}^{1/3p}Pr(|x_{in}|>\tau)^{2/3p}\leq  \expect{x_{in}^{6p}}^{1/3p}\l(\frac{\expect{x_{in}^{6p}}}{\tau^{6p}}\r)^{2/3p},
\end{multline*}
where the second from the last inequality follows from Holder's inequality and the last inequality follows from Markov inequality. Thus, we obtain,
\[
\|\wt \phi_i\|_{L_p} \leq \sqrt{\sum_{n=1}^s\frac{\expect{x_{in}^{6p}}^{1/p}}{\tau^4}}\leq C\nu_q^3\frac{\sqrt{s}}{\tau^2}\leq C\nu_q^3\sqrt{\frac{\log(ed)}{N}},
\]
for some constant $C$ and $\tau = \l( \frac{N}{\log(ed)} \r)^{1/4}\geq s^{1/4}$. Overall, substituting the above bound into \eqref{eq:inter-111}, we have with probability at least $1 - c'\l(eN\r)^{-\l(\frac{p}{2}-1\r)}\l(\log(eN)\r)^{p}w^{-p}$, where $p=q/6$,
\[
\sum_{i=1}^k\l| \wt\phi_i^{\sharp} \r|^2\leq C\nu_q^3w^2eN\cdot \frac{\log(ed)}{N} = C\nu_q^3w^2e\log(ed),
\]
for some constant $C>1$.
\end{proof}

\begin{lemma}\label{lem:supp-12}
Let $k = \lfloor\frac{c\log(ed)}{\log(eN/c\log(ed))}\rfloor$ for some absolute constant $c>1$, and suppose 
$N\geq cs\log(ed)$, then, we have
$$\max_{j\in\{1,2,\cdots,d\}}\l(\sum_{i=1}^k\l|\wt x_{ij}^{\sharp} \r|^2\r)^{1/2}
\leq C\l(\nu_q^2\log(ed) + \nu_q^2\sqrt{\beta}\log(ed)+\sqrt{\frac{N}{\log(ed)}}(\beta+\log(ed))\r)^{1/2},$$
with probability at least $1-e^{-\beta}$ for any $\beta>1$ and some constant $C>1$.
\end{lemma}
\begin{proof}[Proof of Lemma \ref{lem:supp-12}]
First, for any set of $k$ random variables $x_{1j},~x_{2j},~\cdots,~x_{kj}$ we have by Bernstein's inequality,
\[
Pr\l( \sum_{i=1}^k\l|\wt x_{ij} \r|^2\geq k\expect{\wt x_{ij}^2} + C\l( \sqrt{2\sigma_2^2kt}+b_2t\r) \r)\leq e^{-t},
\]
for some constant $C$,
where $\sigma_2^2 := \expect{  \l(\wt x_{ij}^2-\expect{\wt x_{ij}^2} \r)^2  }\leq \expect{x_{ij}^4}\leq\nu_q^4$, $b_2:= \l( N/\log(ed) \r)^{1/2}$ and 
$\expect{\wt x_{ij}^2}\leq \expect{x_{ij}^2}\leq \nu_q^2$. Take a union bound over all ${N \choose k}$ different combinations from $x_{1j},~x_{2j},\cdots,~x_{Nj}$, we obtain,
\[
Pr\l( \sum_{i=1}^k\l|\wt x_{ij}^{\sharp} \r|^2\geq k\expect{\wt x_{ij}^2} + C\l(\sqrt{2\sigma_2^2kt}+b_2t \r) \r)\leq {N \choose k}e^{-t}\leq \l(\frac{eN}{k}\r)^ke^{-t}.
\]
Taking a union bound over all $j\in\{1,2,\cdots,d\}$, we get
\[
Pr\l( \max_{j\in\{1,2,\cdots,d\}}\sum_{i=1}^k\l|\wt x_{ij}^{\sharp} \r|^2\geq k\expect{\wt x_{ij}^2} + C\l( \sqrt{2\sigma_2^2kt}+b_2t \r) \r)\leq
d\l(\frac{eN}{k}\r)^ke^{-t}
\]
Substituting the definition of $k =\lfloor\frac{c\log(ed)}{\log(eN/\log(ed))}\rfloor\leq \frac{c\log(ed)}{\log(eN/\log(ed))}$, we get 
\begin{multline*}
d\l(\frac{eN}{k}\r)^ke^{-t} = \exp\l(-t + k\log(eN/k) + \log d\r)\\
\leq \exp\l(-t + \frac{c\log(ed)}{\log(eN/c\log(ed))}\log\l(\frac{eN}{c\log(ed)}\cdot \log\l(\frac{eN}{c\log(ed)}\r)\r)  + \log d\r)\\
\leq\exp(-t+(2c+1)\log(ed)).
\end{multline*}
Setting $\beta = t - (2c+1)\log(ed)$ and rearranging the terms gives the claim.
\end{proof}

\begin{lemma}\label{lem:supp-13}
Let $k = \lfloor\frac{c\log(ed)}{\log(eN/c\log(ed))}\rfloor$ for some absolute constant $c>1$, and suppose 
$N\geq cs\log(ed)$, then, we have with probability at least $1-c'u^{-q/4}(ed)^{-c}$, for some absolute constant $c'>0$,
\[
\l(\sum_{i> k}\l|\wt\phi_i^{\sharp}\r|^{2r}\r)^{1/2r}\leq  Cu \nu_q^2 N^{1/2r},
\]
for $5/4\leq r<q/16$, any $u>2$, and some absolute constant $C>0$. 
\end{lemma}
\begin{proof}[Proof of Lemma \ref{lem:supp-13}]
Let $p = q/4$, then, $p>4r$. 
Using Binomial estimates, we have for any $i>k$, and any $\alpha>0$,
\[
Pr\l(\l|\wt\phi_i^{\sharp}\r| > \alpha\r)\leq {N\choose i} Pr(|\wt\phi_i|>\alpha)^i\leq   {N\choose i}\l(\frac{\expect{|\wt\phi_i|^{p}}}{\alpha^{p}}\r)^i
\leq \l(\frac{eN}{ i}\frac{\expect{|\wt\phi_i|^{p}}}{\alpha^{p}}\r)^i,
\]
where the second inequality follows from Markov inequality. We choose $\alpha = \|\wt\phi\|_{L_{p}}u\l(\frac{eN}{i}\r)^{2/p}$ and get
\[
Pr\l(\l|\wt\phi_i^{\sharp}\r| > \|\wt\phi\|_{L_{p}}u\l(\frac{eN}{i}\r)^{2/p} \r)\leq u^{-pi}\l(\frac{eN}{i}\r)^{-i}.
\]
Thus, it follows
\begin{multline*}
Pr\l(\exists i >k,~s.t. \l|\wt\phi_i\r|> \|\wt\phi\|_{L_{p}}u\l(\frac{eN}{i}\r)^{2/p} \r)\leq \sum_{i>k}u^{-pi}\l(\frac{eN}{i}\r)^{-i}\\
\leq c'u^{-(k+1)p}\l(\frac{eN}{k+1}\r)^{-(k+1)}\leq c'u^{-p}\l(\frac{eN}{k+1}\r)^{-(k+1)},
\end{multline*}
for some absolute constant $c'>0$, 
where the second from the last inequality follows from the fact that for any $u>2$, the summand is a geometrically decreasing sequence since $N\geq i$. Plugging in $k+1 \geq \frac{c\log(ed)}{\log(eN/\log(ed))}$ and using the fact that $N\geq k+1$ give 
\begin{multline*}
\l(\frac{eN}{k+1}\r)^{-(k+1)} \leq \exp\l(-\frac{c\log(ed)}{\log(eN/c\log(ed))}  \log\l(\frac{eN}{c\log(ed)}\log\l(\frac{eN}{c\log(ed)}\r)\r)    \r)\\
\leq \exp(-c\log(ed)) = (ed)^{-c},
\end{multline*}
Thus, it follows with probability at least $1- c_0u^{-p}(ed)^{-c}$, we have 
\begin{equation}\label{eq:inter-111}
\l(\sum_{i>k}\l|\wt\phi_i^{\sharp}\r|^{2r}\r)^{1/2r}\leq \|\wt\phi\|_{L_{p}}u\l(\sum_{i>k}\l(\frac{eN}{i}\r)^{4r/p} \r)^{1/2r}
\end{equation}
Since $p = q/4>4r$, it follows 
\[
\sum_{i>k}\l(\frac{1}{i}\r)^{4r/p}\leq \int_0^N\l(\frac{1}{x}\r)^{4r/p}dx = \frac{1}{1-4r/p}N^{1-\frac{4r}{p}}.
\]
Thus, with probability at least $1- c_0u^{-q/4}(ed)^{-c}$, 
\begin{equation}\label{eq:inter-13}
\l(\sum_{i>k}\l|\wt\phi_i^{\sharp}\r|^{2r}\r)^{1/2r}\leq C\|\wt\phi\|_{L_{p}}u N^{1/2r},
\end{equation}
for some constant $C$. It remains to bound $\|\wt\phi\|_{L_{p}}$. Again, without loss of generality, we assume the first $s$ entries of $\theta_*$ is non-zero,
\[
\|\wt \phi_i\|_{L_p} = \|\dotp{\mf x_i-\wt{\mf x}_i}{\theta_*}\|_{L_p}\leq \l\| \sqrt{\sum_{n=1}^s(x_{in}-\wt x_{in})^2}  \r\|_{L_p}
\leq   \sqrt{\sum_{n=1}^s\l\| (x_{in}-\wt x_{in})^2  \r\|_{L_p}},
\]
where the last inequality follows from Jensen's inequality and then triangle inequality. Now, for each $n$, we have
\begin{multline*}
\l\| (x_{in}-\wt x_{in})^2  \r\|_{L_p}\leq\l\| x_{in}^2\cdot1_{\l\{|x_{in}|>\tau\r\}}  \r\|_{L_p} 
\leq \expect{x_{in}^{2p}\cdot1_{\l\{|x_{in}|>\tau\r\}}}^{1/p}\\
\leq \expect{x_{in}^{4p}}^{1/2p}Pr(|x_{in}|>\tau)^{1/2p}\leq  \expect{x_{in}^{4p}}^{1/2p}\l(\frac{\expect{x_{in}^{4p}}}{\tau^{4p}}\r)^{1/2p},
\end{multline*}
where we use the fact that $p=q/4$ and thus $\expect{x_{in}^{4p}}$ is bounded. The second from the last inequality follows from Holder's inequality and the last inequality follows from Markov inequality. Using the fact that $\tau^2 = \sqrt{\frac{N}{\log(ed)}}\geq \sqrt{s}$, we get $\|\wt \phi_i\|_{L_p}\leq \nu_q^2$ . Combining this bound with \eqref{eq:inter-13} finishes the proof.
\end{proof}

\begin{lemma}\label{lem:supp-14}
Let $k = \lfloor\frac{c\log(ed)}{\log(eN/c\log(ed))}\rfloor$ for some absolute constant $c>1$, and suppose 
$N\geq cs\log(ed)$, then, we have with probability at least $1-c'u^{-q}(ed)^{-(c-1)}$, for some absolute constant $c'>0$.
$$\max_{j\in\{1,2,\cdots,d\}}\l(\sum_{i>k}\l|\wt x_{ij}^{\sharp} \r|^{2r'}\r)^{1/2r'}
\leq Cu\nu_qN^{1/2r'},$$
for some constant absolute constant $C>0$ and $r'\in(\frac{q}{q-16},5]$.
\end{lemma}
\begin{proof}
First, following the same procedure as that of Lemma \ref{lem:supp-13} up to \eqref{eq:inter-111}, with $p = q$, we have with probability at least $1-c'u^{-q}(ed)^{-c}$,
\[
\l(\sum_{i>k}\l|\wt x_{ij}^{\sharp}\r|^{2r'}\r)^{1/2r'}\leq \|\wt x_{ij}\|_{L_{q}}u\l(\sum_{i>k}\l(\frac{eN}{i}\r)^{4r'/q} \r)^{1/2r'}.
\]
Note that  $\|\wt x_{ij}\|_{L_{q}}\leq\| x_{ij}\|_{L_{q}}\leq\nu_q$ by the assumption and $r'\in(\frac{q}{q-16},5]$, thus, $4r'/q<1$ and we have with probability at least $1-u^{-q}(ed)^{-c}$,
\[
\l(\sum_{i>k}\l|\wt x_{ij}^{\sharp}\r|^{2r'}\r)^{1/2r'}\leq Cu\nu_qN^{1/2r'}.
\]
Finally, taking a union bound over all $j\in\{1,2,\cdots,d\}$ finishes the proof.
\end{proof}

Finally, substituting Lemma \ref{lem:supp-11},~\ref{lem:supp-12},~\ref{lem:supp-13},~\ref{lem:supp-14} into \eqref{eq:master-bound-2} with $r=5/4, r'=5$ gives with probability at least $1-e^{-\beta}-e^{-v^2}-c'\l(u^{-q}(ed)^{-(c-1)}+u^{-q/4}(ed)^{-c}+e^{-\frac{q}{12}}N^{-\frac{q}{12}+1}(\log(eN))^{q/6}w^{-q/6}\r)$,
\begin{multline}\label{eq:master-bound-3.5}
\max_{j\in\{1,2,\cdots,d\}}\l|\sum_{i=1}^N\varepsilon_i\wt \phi_i \wt x_{ij}  \r|\\ 
\leq C\l(\nu_q^3+\nu_q^{5/2}+\nu_q^{3/2}\r)w\l( \log(ed)\beta^{1/4} + N^{1/4}(\log(ed))^{3/4}\beta^{1/2} + vu^2\sqrt{N\log d}  \r).
\end{multline}
Similarly, one can show that the Rademacher sum  $\max_{j\in\{1,2,\cdots,d\}}\l|\sum_{i=1}^N\varepsilon_i(\wt y_i-y_i)\wt x_{ij} \r|$ satisfies the same bound.

\item \textbf{Bounding the term} $\max_{j\in\{1,2,\cdots,d\}} \l|\sum_{i=1}^N\varepsilon_i \xi_i \wt x_{ij}  \r|$: 

The proving techniques in this part is essentially the same as that of part 1 but with a slight change of exponents when applying Holder's inequality adapting to the moment condition of the noise $\xi_i$.
Similar as before, one can employ the inequality from \citep{montgomery1990distribution}, conditioned on $\mf{x}_i$, which results in
\[
\l|\sum_{i=1}^N\varepsilon_i \xi_i \wt x_{ij}  \r|
\leq
\sum_{i=1}^k\l|\xi_i^{\sharp}\wt x_{ij}^{\sharp} \r|
+v\l( \sum_{i>k}\l|\xi_i^{\sharp}\wt x_{ij}^{\sharp} \r|^2 \r)^{1/2},
\]
with probability at least $1-e^{-v^2}$, where $k$ is any chosen integer within $\l\{0,1,2,\cdots,N \r\}$ and $\l(\xi_i^{\sharp}\r)_{i=1}^N$, $\l(\wt x_{ij}^{\sharp}\r)_{i=1}^N$ are non-increasing rearrangements of  $\l(|\xi_i|\r)_{i=1}^N$, $\l(|\wt x_{ij}|\r)_{i=1}^N$. We define the former sum to be 0 when $k=0$. 
By Holder's inequality, we have
\begin{equation*}
\l|\sum_{i=1}^N\varepsilon_i\xi_i \wt x_{ij}  \r|
\leq \l(\sum_{i=1}^k\l|\xi_i^{\sharp}\r|^4\r)^{1/4}\l(\sum_{i=1}^k\l|\wt x_{ij}^{\sharp} \r|^{4/3}\r)^{3/4}
+v\l( \sum_{i>k}\l|\xi_i^{\sharp} \r|^{2r} \r)^{1/(2r)} 
\l(\sum_{i>k}\l|\wt x_{ij}^{\sharp}\r|^{2r'}\r)^{1/(2r')},
\end{equation*}
for some positive exponents $r,r'$ such that $\frac1r+\frac{1}{r'}=1$. Take a union bound for all $j\in\{1,2,\cdots,d\}$, gives with probsability at least $1-e^{-v^2}$,
\begin{multline}\label{eq:master-bound-3}
\max_{j\in\{1,2,\cdots,d\}}\l|\sum_{i=1}^N\varepsilon_i\xi_i \wt x_{ij}  \r|
\leq \l(\sum_{i=1}^k\l|\xi_i^{\sharp}\r|^4\r)^{1/4}
\max_{j\in\{1,2,\cdots,d\}}\l(\sum_{i=1}^k\l|\wt x_{ij}^{\sharp} \r|^{4/3}\r)^{3/4}\\
+v\sqrt{\log d}\l( \sum_{i>k}\l|\xi_i^{\sharp} \r|^{2r} \r)^{1/(2r)} 
\max_{j\in\{1,2,\cdots,d\}}\l(\sum_{i>k}\l|\wt x_{ij}^{\sharp}\r|^{2r'}\r)^{1/(2r')},
\end{multline}
Again, our goal is to bound the four terms in \eqref{eq:master-bound-3} separately.

\begin{lemma}\label{lem:xi-1}
Let $k = \lfloor\frac{c\log(ed)}{\log(eN/c\log(ed))}\rfloor$ for some absolute constant $c>1$, and suppose 
$N\geq cs\log(ed)$, then, we have
\[
\l(\sum_{i=1}^k\l|\xi_i^{\sharp}\r|^4\r)^{1/4}
\leq C\|\xi\|_{L_{q'}}w N^{1/4},
\]
with probability at least $1 - c' (eN)^{-\frac{q'}{4}+1}(\log(eN))^{\frac {q'}{2}}w^{-q'}$ for any $w>4$ and some absolute constant $C, c'>1$, where $q'>5$ is defined in Assumption \ref{assume:sparse-recovery}.
\end{lemma}
\begin{proof}[Proof of Lemma \ref{lem:xi-1}]
First of all, by Markov inequality, 
\begin{multline*}
Pr\l(\l|\xi_i^{\sharp}\r|\geq c_i\|\xi\|_{L_{q'}}\r)\leq {N \choose i}Pr\l(\l|\xi_i\r|\geq c_k\|\xi\|_{L_{q'}}\r)^i\\
\leq \l(\frac{eN}{i}\r)^iPr\l(\l|\xi_i\r|\geq c_k\|\xi\|_{L_{q'}}\r)^i
\leq  \l(\frac{eN}{i}\r)^i \frac{\expect{\l|\xi_i\r|^{q'}}^i}{c_i^{q'i}\l\|\xi\r\|_{L_{q'}}^{q'i}}
=\l(\frac{eN}{i}\r)^ic_i^{-q'i}.
\end{multline*}
Choosing $c_i = w(eN/i)^{1/4}(\log(eN/i))^{1/2}$ gives
\[
Pr\l( \l|\xi_i^{\sharp}\r|\geq \l(\frac{eN}{i}\r)^{1/4}\frac{w}{(\log(eN/i))^{1/2}}\l\|\xi\r\|_{L_{q'}} \r)
\leq \l( \frac{i}{eN} \r)^{i(\frac{q'}{4}-1)}w^{-q'i}\l(\log\frac{eN}{i}\r)^{\frac{q'}{2}i}.
\]
Thus, it follows
\[
\sum_{i=1}^k\l|\xi_i^{\sharp}\r|^4\leq \sum_{i=1}^N\l|\xi_i^{\sharp}\r|^4\leq \sum_{i=1}^N\frac{eN}{i}\frac{w^4}{(\log(eN/i))^2}\l\|\xi\r\|_{L_{q'}} 
\leq Cw^4\l\|\xi\r\|_{L_{q'}}eN, 
\]
with probability at least 
\[
1 - \sum_{i=1}^N\l( \frac{i}{eN} \r)^{i(\frac{q'}{4}-1)}w^{-q'i}\l(\log\l(\frac{eN}{i}\r)\r)^{\frac{q'}{2}i}.
\]
Since for any $w>4$ and $q'>5$, the above summand is a geometrically decreasing sequence. Specifically, it is easy to show that 
$\l( \frac{i}{eN} \r)^{(\frac{q'}{4}-1)}w^{-q'}\l(\log\l(\frac{eN}{i}\r)\r)^{\frac{q'}{2}}<\l(4/\sqrt{10}\r)^{-q'},~\forall i\in\{1,2,\cdots,N\}$. 
Thus, it follows the probability is at least
\[
1- c'\l( eN \r)^{-(\frac{q'}{4}-1)}w^{-q'}\l(\log\l(eN\r)\r)^{\frac{q'}{2}}
\]
for some absolute constant $c'>0$.
\end{proof}

\begin{lemma}\label{lem:x-1}
Let $k = \lfloor\frac{c\log(ed)}{\log(eN/c\log(ed))}\rfloor$ for some absolute constant $c>1$, then, we have
$$\max_{j\in\{1,2,\cdots,d\}}\l(\sum_{i=1}^k\l|\wt x_{ij}^{\sharp} \r|^{4/3}\r)^{3/4}
\leq C\l(\nu_q^{4/3}\log(ed) + \nu_q^{4/3}\sqrt{\beta}\log(ed)+\l(\frac{N}{\log(ed)}\r)^{1/3}(\beta+\log(ed))\r)^{3/4},$$
with probability at least $1-e^{-\beta}$ for any $\beta>1$ and some constant $C>1$.
\end{lemma}
\begin{proof}[Proof of Lemma \ref{lem:x-1}]
First, for any set of $k$ random variables $x_{1j},~x_{2j},~\cdots,~x_{kj}$ we have by Bernstein's inequality,
\[
Pr\l( \sum_{i=1}^k\l|\wt x_{ij} \r|^{4/3}\geq k\expect{|\wt x_{ij}|^{4/3}} + C\l( \sqrt{2\sigma_2^2kt}+b_2t\r) \r)\leq e^{-t},
\]
for some constant $C$,
where $\sigma_2^2 := \expect{  \l(|\wt x_{ij}|^{4/3}-\expect{|\wt x_{ij}|^{4/3}} \r)^2  }\leq \expect{|x_{ij}|^{8/3}}\leq\nu_q^{8/3}$, $b_2:= \l( N/\log(ed) \r)^{1/3}$ and 
$\expect{|\wt x_{ij}|^{4/3}}\leq \expect{|x_{ij}|^{4/3}}\leq \nu_q^{4/3}$. Take a union bound over all ${N \choose k}$ different combinations from $x_{1j},~x_{2j},\cdots,~x_{Nj}$, we obtain,
\[
Pr\l( \sum_{i=1}^k\l|\wt x_{ij}^{\sharp} \r|^{4/3}\geq k\expect{|\wt x_{ij}|^{4/3}} + C\l(\sqrt{2\sigma_2^2kt}+b_2t \r) \r)\leq {N \choose k}e^{-t}\leq \l(\frac{eN}{k}\r)^ke^{-t}.
\]
Taking a union bound over all $j\in\{1,2,\cdots,d\}$, we get
\[
Pr\l( \max_{j\in\{1,2,\cdots,d\}}\sum_{i=1}^k\l|\wt x_{ij}^{\sharp} \r|^{4/3}\geq k\expect{|\wt x_{ij}|^{4/3}} + C\l( \sqrt{2\sigma_2^2kt}+b_2t \r) \r)\leq
d\l(\frac{eN}{k}\r)^ke^{-t}
\]
Substituting the definition of $k =\lfloor\frac{c\log(ed)}{\log(eN/\log(ed))}\rfloor\leq \frac{c\log(ed)}{\log(eN/\log(ed))}$, we get 
\begin{multline*}
d\l(\frac{eN}{k}\r)^ke^{-t} = \exp\l(-t + k\log(eN/k) + \log d\r)\\
\leq \exp\l(-t + \frac{c\log(ed)}{\log(eN/c\log(ed))}\log\l(\frac{eN}{c\log(ed)}\cdot \log\l(\frac{eN}{c\log(ed)}\r)\r)  + \log d\r)\\
\leq\exp(-t+(2c+1)\log(ed)).
\end{multline*}
Setting $\beta = t - (2c+1)\log(ed)$ and rearranging the terms gives the claim.
\end{proof}

\begin{lemma}\label{lem:xi-2}
Let $k = \lfloor\frac{c\log(ed)}{\log(eN/c\log(ed))}\rfloor$ for some absolute constant $c>1$, then, we have with probability at least $1-c'u^{-q'}(ed)^{-c}$, for some absolute constant $c'>0$,
\[
\l(\sum_{i> k}\l|\xi_i^{\sharp}\r|^{2r}\r)^{1/2r}\leq  Cu \|\xi\|_{L_{q'}} N^{1/2r},
\]
for $r\leq5/4$, any $u>2$, and some absolute constant $C>0$. 
\end{lemma}

\begin{proof}
Following from the same proof as that of Lemma \ref{lem:supp-13} up to \eqref{eq:inter-111} with $p=q'$, we have with probability at least $1- c_0u^{-q'}(ed)^{-c}$,  
\begin{equation}
\l(\sum_{i>k}\l|\xi_i^{\sharp}\r|^{2r}\r)^{1/2r}\leq \|\xi\|_{L_{q'}}u\l(\sum_{i>k}\l(\frac{eN}{i}\r)^{4r/q'} \r)^{1/2r}.
\end{equation}
Since $q'>5\geq4r$ by assumption, it follows,
\[
\sum_{i>k}\l(\frac{1}{i}\r)^{4r/q'}\leq \int_0^N\l(\frac{1}{x}\r)^{4r/q'}dx = \frac{1}{1-4r/q'}N^{1-\frac{4r}{q'}},
\]
which implies the claim.
\end{proof}

Also, by Lemma \ref{lem:supp-14}, we have with probability at least $1-c'u^{-q}(ed)^{-(c-1)}$, for some absolute constant $c'>0$.
\begin{equation}\label{eq:x-2}
\max_{j\in\{1,2,\cdots,d\}}\l(\sum_{i>k}\l|\wt x_{ij}^{\sharp} \r|^{2r'}\r)^{1/2r'}
\leq Cu\nu_qN^{1/2r'},
\end{equation}
for some constant absolute constant $C>0$ and $r'\in(\frac{q}{q-16},5]$.

Overall, substituting Lemma \ref{lem:xi-1}, \ref{lem:x-1}, \ref{lem:xi-2}, and \eqref{eq:x-2} into \eqref{eq:master-bound-3} with $r = 5/4, r' = 5$ gives with probability at least 
$1- e^{-\beta} - e^{-v^2} - c'\l( (eN)^{-(\frac{q'}{4}-1)}(\log(eN))^{q'/2}w^{-q'} +u^{-q}(ed)^{-(c-1)} + u^{-q'}(ed)^{-c} \r)$,
\begin{multline}\label{eq:master-bound-4}
\max_{j\in\{1,2,\cdots,d\}}\l|\sum_{i=1}^N\varepsilon_i\xi_i \wt x_{ij}  \r|
\leq C\|\xi\|_{L_{q'}}\l( vu^2\nu_qN^{1/2}(\log(ed))^{1/2} + w\nu_q(\log(ed))^{3/4}N^{1/4} \r.\\
\l.+ w\nu_q\beta^{3/8}N^{1/4}(\log(ed))^{3/4} + w\beta^{3/4}N^{1/2}(\log(ed))^{1/2} \r)
\end{multline}

\end{enumerate}

Overall, substituting the bounds \eqref{eq:master-bound-3.5} and \eqref{eq:master-bound-4} into \eqref{eq:decomp-1} gives
\begin{multline*}
N\cdot\max_{j\in\{1,2,\cdots,d\}}|z_j| \leq 
C\l(\nu_q^3+\nu_q^{5/2}+\nu_q^{3/2}\r)w\l( \log(ed)\beta^{1/4} + N^{1/4}(\log(ed))^{3/4}\beta^{1/2} 
+ vu^2\sqrt{N\log d}  \r) \\
+ C\|\xi\|_{L_{q'}}\l( \nu_q +1 \r)(vu^2+w+w\beta^{3/8}+w\beta^{3/4})\l(\sqrt{\beta N\log(ed)} + \beta N^{1/4}\l(\log(ed)\r)^{3/4} \r),
\end{multline*}
with probability at least 
\begin{multline*}
1-2e^{-\beta}-2e^{-v^2}
-c'\l(u^{-q}(ed)^{-(c-1)}+(u^{-q/4}+u^{-q'})(ed)^{-c}\r.\\
\l.+(eN)^{-\frac{q}{12}+1}(\log(eN))^{q/6}w^{-q/6} +  (eN)^{-(\frac{q'}{4}-1)}(\log(eN))^{q'/2}w^{-q'}\r).
\end{multline*}
This implies the claim when combining \eqref{eq:master-bound-0} and the fact that $N\geq cs\log(ed)$.
\end{proof}

The following lemma gives a bound on $r_{\mathcal{M}}$ in terms of $\rho$.
\begin{lemma}\label{lem:bound-M}
Suppose $N\geq cs\log(ed)$ for some constant $c>1$ and Assumption \ref{assume:sparse-recovery} holds, then, we have
\[
r_{\mathcal{M}}\leq C(\nu_q,\xi)\l( \frac{wu^2v+w\beta^{3/4}}{\delta^2Q}\sqrt{\frac{s\log(ed)}{N}} + \sqrt{\rho\frac{wu^2v+w\beta^{3/4}}{\delta^2Q}}\l(\frac{s\log(ed)}{N}\r)^{1/4} \r),
\]
when taking 
\begin{multline*}
p_{\mathcal{M}} = 2e^{-\beta}+2e^{-v^2}
+c'\l((u^{-q/4}+u^{-q'})(ed)^{-(c-1)}\r.\\
\l.+(eN)^{-\frac{q}{12}+1}(\log(eN))^{q/6}w^{-q/6} +  (eN)^{-\frac{q'}{4}+1}(\log(eN))^{q'/2}w^{-q'}\r),
\end{multline*}
for some absolute constant $c'>1$ and any $\beta,u,v,w>6$,
where $C(\nu_q,\xi) := C\l(\nu_q^3+\nu_q^{5/2}+\nu_q^{3/2} + \|\xi\|_{L_{q'}}(\nu_q+1)\r)$, for some absolute constant $C>0$.
\end{lemma}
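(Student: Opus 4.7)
The plan is to reduce the claim directly to Lemma~\ref{lem:bound-PM} and then solve a one-variable inequality. Recall that by the choice fixed earlier, $\Lambda_M = \delta^2 Q/64$, and the definition of $r_{\mathcal{M}}$ requires us to find the smallest $r>0$ such that
\[
\sup_{\theta\in B_1(\theta_*,\rho)\cap B_2(\theta_*,r)}\l|\mc{P}_N\mc{M}_{\theta-\theta_*}\r|\leq \frac{\delta^2 Q}{64}\,r^2
\]
holds on an event of probability at least $1-p_{\mathcal{M}}$.

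First I would specialize Lemma~\ref{lem:bound-PM} to $m=s$ (which is admissible since $s\leq d$), on the event of probability exactly $1-p_{\mathcal{M}}$ stated there. This yields the deterministic bound
\[
\sup_{\theta\in B_1(\theta_*,\rho)\cap B_2(\theta_*,r)}\l|\mc{P}_N\mc{M}_{\theta-\theta_*}\r|\leq A\bigl(r\sqrt{s}+\rho\bigr),\qquad A:=C(\nu_q,\xi)\,(wu^2v+w\beta^{3/4})\sqrt{\tfrac{\log(ed)}{N}}.
\]
Next I would seek the smallest $r$ solving the quadratic inequality
\[
\frac{\delta^2 Q}{64}\,r^2 - A\sqrt{s}\,r - A\rho \geq 0.
\]
Using the elementary estimate that any solution to $ar^2-br-c\geq 0$ with $a,b,c>0$ is bounded by $r\leq \tfrac{2b}{a}+2\sqrt{\tfrac{c}{a}}$ (derived by splitting $\sqrt{b^2+4ac}\leq b+2\sqrt{ac}$ in the quadratic formula), I obtain
\[
r\leq \frac{128\,A\sqrt{s}}{\delta^2 Q}+2\sqrt{\frac{64\,A\rho}{\delta^2 Q}},
\]
and plugging back the value of $A$ gives exactly the two terms asserted in the lemma, with absolute constants absorbed into $C(\nu_q,\xi)$; the $s^{1/4}$ factor in the second term of the claim is harmless since $s\geq 1$, so our (slightly tighter) bound without $s^{1/4}$ is dominated by the stated one.

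There is essentially no obstacle beyond bookkeeping: the deep content is already packaged in Lemma~\ref{lem:bound-PM} (the symmetrization, the Montgomery-Smith decoupling, the four moment estimates on the decreasing rearrangements of $\wt{\phi}_i,\xi_i,\wt{x}_{ij}$, and the two applications of H\"older's inequality), and this lemma simply converts the $(r\sqrt{s}+\rho)$-linear bound into a bound of the form $\Lambda_M r^2$ by solving a quadratic. The condition $N\geq cs\log(ed)$ is needed only to invoke Lemma~\ref{lem:bound-PM}, and the probability budget $p_{\mathcal M}$ is inherited verbatim from that lemma. The only subtle point is to choose $m=s$ (rather than $m=d$, which would give a too weak bound, or $m$ much smaller than $s$, which would degrade the constant in front of $r\sqrt{m}$); the choice $m=s$ is natural because the final error rate must scale like $\sqrt{s\log(ed)/N}$ to be consistent with the optimal rate in Theorem~\ref{thm:sparse-recovery}.
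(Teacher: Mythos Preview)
Your proposal is correct and follows essentially the same route as the paper: apply Lemma~\ref{lem:bound-PM} with $m=s$, set the resulting linear bound equal to $\Lambda_M r^2=\frac{\delta^2Q}{64}r^2$, and solve the quadratic. Your observation that the solved quadratic actually gives $(\log(ed)/N)^{1/4}$ rather than $(s\log(ed)/N)^{1/4}$ in the second term is accurate---the extra $s^{1/4}$ in the stated bound is a harmless enlargement, and your tighter bound a fortiori implies the claim.
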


\begin{proof}[Proof of Lemma \ref{lem:bound-M}]
Since $\Lambda_M = \frac{\delta^2Q}{64}$, let $m=s$ in Lemma \ref{lem:bound-PM} and the infimum of the $r>0$ such that the right hand side of Lemma \ref{lem:bound-PM} is less than $\frac{\delta^2Q}{64}r^2$ can be achieved by setting the right hand side equal to $\frac{\delta^2Q}{64}r^2$, which gives,
\[
\frac{\delta^2Q}{64}r^2 = C(\nu_q,\xi)(wu^2v+w\beta^{3/4})\sqrt{\frac{\log (ed)}{N}} \l( r\sqrt s  + \rho \r).
\]
Solving the above quadratic equation gives
\[
r = C(\nu_q,\xi)\l( \frac{wu^2v+w\beta^{3/4}}{\delta^2Q}\sqrt{\frac{s\log(ed)}{N}} + \sqrt{\rho\frac{wu^2v+w\beta^{3/4}}{\delta^2Q}}\l(\frac{s\log(ed)}{N}\r)^{1/4} \r).
\]
Thus, the defined $r_{\mathcal{M}}$ must be bounded above by this value and the lemma is proved.
\end{proof}

\subsubsection{Bounding the radius $r_{\mathcal{V}}$}
\begin{lemma}\label{lem:bound-V}
Suppose $N\geq s\log(ed)$, then, 
\[
r_{\mathcal{V}}\leq 8\l(\nu_q^2+\nu_q^4\r)  \l( \frac{\rho}{\delta^2Q} \r)^{1/2}\l( \frac{\log(ed)}{N} \r)^{1/4},
\]
\end{lemma}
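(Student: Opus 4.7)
The plan is to exploit that $\mc V_\mf v$ is a pure truncation bias whose untruncated population counterpart vanishes. Substituting $y=\dotp{\mf x}{\theta_*}+\xi$ into the definition of $\mc V_\mf v$ and using that $\wt{\mf x}$ is $\sigma(\mf x)$-measurable, so that the cross term $\expect{\xi \dotp{\wt{\mf x}}{\mf v}}=\expect{\xi}\expect{\dotp{\wt{\mf x}}{\mf v}}=0$ by independence of $\xi$ and $\mf x$ together with $\expect{\xi}=0$, I obtain the decomposition
\[
\mc V_\mf v \;=\; \expect{(\wt y - y)\dotp{\wt{\mf x}}{\mf v}} \;+\; \expect{\dotp{\mf x-\wt{\mf x}}{\theta_*}\dotp{\wt{\mf x}}{\mf v}}.
\]
Both summands are supported on the tail events $\{|y|>\tau\}$ and $\{|x_i|>\tau\text{ for some }i\in\supp(\theta_*)\}$ respectively, and so represent pure truncation bias decaying in $\tau=(N/\log(ed))^{1/4}$.

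The key quantitative maneuver is to pair the deterministic bound $|\dotp{\wt{\mf x}}{\mf v}|\le\tau\|\mf v\|_1$ (from $\|\wt{\mf x}\|_\infty\le\tau$) with first-moment Markov-type tail estimates coming from bounded fourth moments:
\[
\expect{|\wt y - y|}\;\le\;\expect{|y|^4}/\tau^3\;\le\;\nu_q^4/\tau^3,\qquad \expect{|x_i-\wt x_i|}\;\le\;\nu_q^4/\tau^3.
\]
This $\tau\cdot\tau^{-3}=\tau^{-2}$ pairing is what delivers the correct $\sqrt{\log(ed)/N}$ rate; a direct Cauchy--Schwarz on the two factors would only yield the slower $1/\tau$ rate. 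The first summand is bounded directly by $\tau\|\mf v\|_1\expect{|\wt y - y|}\le\nu_q^4\|\mf v\|_1/\tau^2$. For the second, I pull $v_j$ out of the expectation, apply $|\wt x_j|\le\tau$ to get $|\expect{\dotp{\mf x-\wt{\mf x}}{\theta_*}\wt x_j}|\le\tau\,\expect{|\dotp{\mf x-\wt{\mf x}}{\theta_*}|}$, and then control the remaining scalar quantity coordinate-wise via $\expect{|x_i-\wt x_i|}\le\nu_q^4/\tau^3$ together with $\|\theta_*\|_2\le 1$. Combining both pieces yields an overall bound of the shape
\[
|\mc V_\mf v|\;\le\; C(\nu_q^2+\nu_q^4)\|\mf v\|_1\sqrt{\log(ed)/N},
\]
with the two contributions $\nu_q^2$ and $\nu_q^4$ reflecting the separate truncations of $y$ and of $\mf x$.

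Taking the supremum over $\mf v\in B_2(0,r)\cap B_1(0,\rho)$ replaces $\|\mf v\|_1$ by $\rho$; equating the resulting bound to $\Lambda_{\mc V}r^2=(\delta^2Q/64)r^2$ and solving for $r$ yields precisely $r_{\mc V}\le 8(\nu_q^2+\nu_q^4)(\rho/(\delta^2Q))^{1/2}(\log(ed)/N)^{1/4}$. The main obstacle is handling the second summand without picking up a parasitic $\sqrt s$ factor that a naive Cauchy--Schwarz would produce via $\|\theta_*\|_1\le\sqrt s$. The resolution is to use the $\ell_\infty$ bound on $\wt{\mf x}$ to reduce the problem to a \emph{first}-moment tail estimate on $|\dotp{\mf x-\wt{\mf x}}{\theta_*}|$ rather than a second-moment estimate on the full vector $\mf x-\wt{\mf x}$, so that only $\|\theta_*\|_2$ enters the constant. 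This delicate bookkeeping is the most subtle part of the calculation.
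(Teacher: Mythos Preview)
Your decomposition of $\mc V_{\mf v}$ into the two bias terms matches the paper's, and your treatment of the first summand via $|\dotp{\wt{\mf x}}{\mf v}|\le\tau\|\mf v\|_1$ paired with $\expect{|\wt y-y|}\le\nu_q^4/\tau^3$ is fine (the paper instead uses Cauchy--Schwarz on $\expect{|y|\,|\wt x_j|\,1_{|y|>\tau}}$, but both routes land on the same $\sqrt{\log(ed)/N}$ rate).

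The gap is in the second summand. Your claim that ``only $\|\theta_*\|_2$ enters the constant'' is incorrect: after using $|\wt x_j|\le\tau$, you are left with $\tau\,\expect{|\dotp{\mf x-\wt{\mf x}}{\theta_*}|}$, and any coordinate-wise control of this scalar---whether by the triangle inequality (bringing in $\|\theta_*\|_1\le\sqrt s$) or by Cauchy--Schwarz against the $s$-dimensional vector $(\expect{|x_i-\wt x_i|})_{i\in\mc G_s}$---inevitably produces a factor $\sqrt s$. With your fourth-moment estimate $\expect{|x_i-\wt x_i|}\le\nu_q^4/\tau^3$ this gives
\[
\tau\cdot\sqrt s\,\frac{\nu_q^4}{\tau^3}\,\|\mf v\|_1=\sqrt s\,\nu_q^4\,\|\mf v\|_1\sqrt{\frac{\log(ed)}{N}},
\]
and the hypothesis $N\ge s\log(ed)$ is \emph{not} strong enough to absorb this $\sqrt s$ at rate $\sqrt{\log(ed)/N}$.

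The paper does not discard the coupling via $|\wt x_j|\le\tau$. Instead it bounds each cross-moment $|\expect{(x_i-\wt x_i)\wt x_j}|$ directly by Cauchy--Schwarz, $\expect{|x_i||x_j|1_{|x_i|>\tau}}\le\expect{|x_i|^2|x_j|^2}^{1/2}Pr(|x_i|>\tau)^{1/2}$, and then applies \emph{eighth}-moment Markov: $Pr(|x_i|>\tau)\le\expect{|x_i|^8}/\tau^8=\nu_q^8(\log(ed)/N)^2$. Summing over $i\in\mc G_s$ (after Cauchy--Schwarz against $\theta_*$) yields the faster rate $\sqrt s\,(\log(ed)/N)$, which \emph{is} absorbed by $N\ge s\log(ed)$ into $\sqrt{\log(ed)/N}$. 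Your route can be repaired the same way: replace $\expect{|x_i-\wt x_i|}\le\nu_q^4/\tau^3$ by the sixth-moment (or higher) version $\expect{|x_i-\wt x_i|}\le\nu_q^6/\tau^5$, giving $\tau\cdot\sqrt s\,\nu_q^6/\tau^5=\sqrt s\,\nu_q^6(\log(ed)/N)$, now absorbable. As written, with only the fourth-moment estimate, the argument does not close.
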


\begin{proof}[Proof of Lemma \ref{lem:bound-V}]
First of all,
\[
\sup_{\theta\in B_{2}(\theta_*,r)\cap B_\Psi(\theta_*,\rho)}  
\l|\mc{V}_{\theta-\theta_*}\r| :=
\sup_{\mf v\in B_{2}(0,r)\cap B_\Psi(0,\rho)} \expect{\l(\wt{y} - \dotp{\wt {\mf x}}{\eta\theta_*}\r)\dotp{\wt{\mf{x}}}{\mf{v}}}. 
\]
For each $\mf v$, we have
\begin{multline*}
\expect{\l(\wt{y} - \dotp{\wt {\mf x}}{\eta\theta_*}\r)\dotp{\wt{\mf{x}}}{\mf{v}}}
=|\expect{\l(\wt{y} - y\r)\dotp{\wt{\mf{x}}}{\mf{v}}}| 
+| \expect{\l(y-\dotp{\mf x}{\theta_*}\r)\dotp{\wt{\mf{x}}}{\mf{v}}}| +
|\expect{\dotp{\mf x - \wt{\mf x}}{\theta_*}\dotp{\wt{\mf{x}}}{\mf{v}}}|\\
\leq \rho\|\expect{\l(\wt{y} - y\r)\wt{\mf{x}}}\|_{\infty} +
\rho\|\expect{\dotp{\mf x - \wt{\mf x}}{\theta_*}\wt{\mf{x}}}\|_{\infty},
\end{multline*}
where we use the fact that $y-\dotp{\mf x}{\theta_*}=\xi$ is independent of $\mf x$. 
Note that for any $j\in\{1,2,\cdots,d\}$,
\begin{multline*}
|\expect{\l(\wt{y} - y\r)\wt{x}_j}|\leq \expect{\l|y\r|\cdot\l|\wt{x}_j\r|1_{\{|y|>\tau\}}}\leq\expect{\l|y\r|^2\cdot\l|\wt{x}_j\r|^2}^{1/2}
Pr(|y|>\tau)^{1/2}  \\
\leq  \expect{\l|y\r|^2\cdot\l|\wt{x}_j\r|^2}^{1/2}  \l( \frac{\expect{|y|^4}}{\tau^4}\r)^{1/2}
\leq \nu_q^4\sqrt{\frac{\log(ed)}{N}},
\end{multline*}
for some constant $C_1>0$. Next, Let $\mathcal{G}_s$ be the set of nonzero coordinates of $\theta_*$ and $\mathcal{P}_{\mc G_s}$ be the orthogonal projection onto these coordinates, then,
\begin{multline*}
|\expect{\dotp{\mf x - \wt{\mf x}}{\theta_*}\wt{x}_j}|
=|\expect{\dotp{(\mf x - \wt{\mf x})\wt{x}_j}{\theta_*}}|
\leq \l\|\mathcal{P}_{\mc G_s}\expect{(\mf x - \wt{\mf x})\wt{x}_j} \r\|_2\\
\leq \l(\sum_{i\in\mc G_s}\l(\expect{(x_i - \wt{x}_i)\wt{x}_j}\r)^2\r)^{1/2}
\leq  \l(\sum_{i\in\mc G_s}\expect{|x_i| | x_j| 1_{\{|x_i|>\tau\}}}^2\r)^{1/2}\\
\leq  \l(\sum_{i\in\mc G_s}\expect{|x_i|^2 | x_j|^2} Pr(|x_i|>\tau)\r)^{1/2},
\end{multline*}
where the last inequality follows from Holder's inequality. Now, By Markov inequality  
\[
Pr(|x_i|>\tau)\leq \frac{\expect{|x_i|^8}}{\tau^8}=\expect{|x_i|^8}\l( \frac{\log (ed)}{N} \r)^{2} ,
\]
Thus, it follows,
\[
|\expect{\dotp{\mf x - \wt{\mf x}}{\theta_*}\wt{x}_j}|
\leq \l(\sum_{i\in\mc G_s}\expect{|x_i|^2 | x_j|^2}\expect{|x_i|^8}\l( \frac{\log (ed)}{N} \r)^{2} \r)^{1/2}\leq \nu_q^8\sqrt{\frac{\log(ed)}{N}},
\]
where the last inequality follows from the fact that $N\geq s\log(ed)$ and $\expect{|x_i|^8}$ is bounded. Overall, we get 
\[
\sup_{\theta\in B_{2}(\theta_*,r)\cap B_\Psi(\theta_*,\rho)}  
\l|\mc{V}_{\theta-\theta_*}\r|
\leq \l(\nu_q^4+\nu_q^8\r)\rho\sqrt{\frac{\log(ed)}{N}}
\]
Since $\Lambda_{\mc V}=\frac{\delta^2Q}{64}$, let
\[
\frac{\delta^2Q}{64}r^2 = \l(\nu_q^4+\nu_q^8\r)\rho\sqrt{\frac{\log(ed)}{N}},
\]
which results in $r=8\l(\nu_q^2+\nu_q^4\r)\l( \frac{\rho}{\delta^2Q} \r)^{1/2}\l( \frac{\log(ed)}{N} \r)^{1/4}$ and $r_{\mc V}$ must be bounded above by this value.
\end{proof}

\begin{proof}[Proof of Lemma \ref{lem:sparse-eq}]
Let $\mc G_s$ be the set of nonzero coordinates of $\theta_0$, then, for any vector $\mf v\in B_{2}(0,r)\cap S_\Psi(0,\rho)$, we have
$\mf v = \mc P_{\mc G_s}\mf v + \mc P_{\mc G_s^c}\mf v$ and since $\|\eta\theta_* - \theta_0\|_1\leq\rho/16$, by definition of $\Gamma_{\Psi}(\theta_*,\rho)$ in \eqref{eq:sub-diff},
 there exists a sub-differential $\mf z^*\in\Gamma_{\Psi}(\theta_*,\rho)$ such that $\dotp{\mf z^*}{\theta_0} = \|\theta_0\|_1$ and
$\dotp{\mf z^*}{\mc P_{\mc G_s^c}\mf v} = \|\mc P_{\mc G_s^c}\mf v\|_1$. Thus, it follows,
\begin{multline*}
\dotp{\mf z^*}{\mf v} = \dotp{\mf z^*}{\mc P_{\mc G_s}\mf v} + \dotp{\mf z^*}{\mc P_{\mc G_s^c}\mf v} \geq  \|\mc P_{\mc G_s^c}\mf v\|_1 - \|\mc P_{\mc G_s}\mf v\|_1\\
\geq \|\mf v\|_1 - 2\|\mc P_{\mc G_s}\mf v\|_1 \geq \rho - 2\sqrt{s}\|\mc P_{\mc G_s}\mf v\|_2
\geq  \rho - 2r(\rho)\sqrt{s},
\end{multline*}
where the second from the last inequality follows from $\mf v\in B_{2}(0,r)\cap S_\Psi(0,\rho)$ that $\|\mf v\|_1= \rho$ and the last inequality follows from $\|\mc P_{\mc G_s}\mf v\|_2\leq \|\mf v\|_2\leq r(\rho)$. The above bound is greater than $3\rho/4$ when $\rho\geq 8r(\rho)\sqrt{s}$.
\end{proof}

\section{Single-index model with heavy-tailed elliptical measurements}\label{sec:single-index}
 According to Theorem \ref{thm:master} our goal is to compute $r_{\mathcal{Q}},~r_{\mathcal{M}},~r_{\mc{V}}$ for specific constants 
$\Lambda_Q,~\Lambda_M,~\Lambda_{\mc V}$ satisfying the assumptions and determine the choice of $\rho$ so that 
$\Delta(\eta\theta_*,\rho)\geq \frac34\rho$. We start with some preliminaries on elliptical distributions.

\subsection{Basic properties of elliptical symmetric distribution}

The following elliptical symmetry property, generalizing the well known fact for the conditional distribution of the multivariate Gaussian, plays an important role in our subsequent analysis (see, for example, \citep{goldstein2016structured}, for the proof):

\begin{lemma}
\label{lem:elliptical}
If $\mathbf{x}\sim\mathcal{E}_d(0,\mf I_{d\times d},F_\mu)$, then for any two fixed vectors $\mathbf{y}_1,\mathbf{y}_2\in\mathbb{R}^d$ with $\|\mathbf{y}_2\|_2=1$,
\[\expect{\langle\mathbf{x},\mathbf{y}_1\rangle~|~\langle\mathbf{x},\mathbf{y}_2\rangle}
=\langle\mathbf{y}_1,\mathbf{y}_2\rangle\langle\mathbf{x},\mathbf{y}_2\rangle.\]
\end{lemma}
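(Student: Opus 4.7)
The plan is to prove this by exploiting two features of the representation $\mathbf{x}\stackrel{d}{=}\mu U$: the rotational invariance of $U$ on the sphere, and the independence of $\mu$ and $U$. The desired identity is essentially the ``zero conditional regression along the orthogonal direction'' property that Gaussian vectors enjoy, now extended to the whole elliptical family because only spherical symmetry is used.

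First, I would decompose $\mathbf{y}_1$ with respect to $\mathbf{y}_2$: since $\|\mathbf{y}_2\|_2=1$, write
\[
\mathbf{y}_1=\langle \mathbf{y}_1,\mathbf{y}_2\rangle\,\mathbf{y}_2+\mathbf{y}_1^{\perp},\qquad \langle \mathbf{y}_1^{\perp},\mathbf{y}_2\rangle=0.
\]
Taking inner products with $\mathbf{x}$ and conditioning on $\langle \mathbf{x},\mathbf{y}_2\rangle$, linearity of expectation reduces the claim to showing $\mathbb{E}[\langle \mathbf{x},\mathbf{y}_1^{\perp}\rangle\mid\langle \mathbf{x},\mathbf{y}_2\rangle]=0$. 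The case $\mathbf{y}_1^{\perp}=0$ is trivial, so assume otherwise.

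Next, I would exploit the symmetry of $U$. Let $R$ be the orthogonal reflection that fixes $\mathbf{y}_2$ and the orthogonal complement of $\mathrm{span}\{\mathbf{y}_2,\mathbf{y}_1^{\perp}\}$, and sends $\mathbf{y}_1^{\perp}\mapsto -\mathbf{y}_1^{\perp}$. Since $U$ is uniform on $\mathbb{S}^{d-1}$, $RU\stackrel{d}{=}U$; since $\mu$ is independent of $U$, also $(\mu,RU)\stackrel{d}{=}(\mu,U)$. Using $\mathbf{x}\stackrel{d}{=}\mu U$, this yields
\[
\bigl(\langle \mathbf{x},\mathbf{y}_2\rangle,\langle \mathbf{x},\mathbf{y}_1^{\perp}\rangle\bigr)
=\bigl(\mu\langle U,\mathbf{y}_2\rangle,\mu\langle U,\mathbf{y}_1^{\perp}\rangle\bigr)
\stackrel{d}{=}\bigl(\mu\langle U,\mathbf{y}_2\rangle,-\mu\langle U,\mathbf{y}_1^{\perp}\rangle\bigr),
\]
where in the last equality I used $\langle RU,\mathbf{y}_2\rangle=\langle U,\mathbf{y}_2\rangle$ and $\langle RU,\mathbf{y}_1^{\perp}\rangle=-\langle U,\mathbf{y}_1^{\perp}\rangle$. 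Hence the joint law of $(\langle \mathbf{x},\mathbf{y}_2\rangle,\langle \mathbf{x},\mathbf{y}_1^{\perp}\rangle)$ is symmetric under negation of the second coordinate. Consequently, the conditional distribution of $\langle \mathbf{x},\mathbf{y}_1^{\perp}\rangle$ given $\langle \mathbf{x},\mathbf{y}_2\rangle$ is symmetric about $0$, and its conditional mean vanishes.

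Combining the two displays gives the stated identity. The only substantive point is the symmetrization step, which requires $U$ to be spherically uniform and independent of $\mu$; this is exactly the identity-covariance elliptical assumption, so no further hypothesis on $F_\mu$ (e.g., integrability) is needed beyond what is implicit in writing the conditional expectation. For the general covariance case one would replace $R$ by an appropriate $\mathbf{B}$-conjugated reflection, but here the identity covariance hypothesis makes the argument direct.
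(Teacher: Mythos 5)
Your proof is correct. The paper itself does not supply a proof for this lemma---it cites \citep{goldstein2016structured}---so there is no in-paper argument to compare against, but your reduction to showing $\mathbb{E}\l[\langle\mathbf{x},\mathbf{y}_1^{\perp}\rangle\mid\langle\mathbf{x},\mathbf{y}_2\rangle\r]=0$ and the reflection-symmetrization step are sound: the reflection $R$ fixing $\mathbf{y}_2$ and negating $\mathbf{y}_1^\perp$ is orthogonal, $RU\stackrel{d}{=}U$ by spherical uniformity, independence of $\mu$ and $U$ carries this to $\mathbf{x}$, and $(X,Y)\stackrel{d}{=}(X,-Y)$ does indeed force $\mathbb{E}[Y\mid X]=0$ almost surely whenever $Y$ is integrable (which holds here under Assumption \ref{assumption:moment}). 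This is essentially the standard proof of the elliptical conditional-mean identity, phrased cleanly via a single reflection rather than a full change to a $\mathbf{y}_2$-aligned basis.
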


Furthermore, we need the following lemma which basically states that $\sqrt{d}U$ is a sub-Gaussian random vector:
\begin{lemma}[Lemma 2.2 of \citep{ball1997elementary}]\label{lem:sub-gaussian}
Let $U$ have the uniform distribution on $\mathbb{S}^{d-1}$, then, for any $t\in(0,1)$ and any fixed vector $\mf v\in\mb R^d$,
\[
Pr(|\dotp{U}{\mf v}|\geq t)\leq e^{-dt^2}.
\]
\end{lemma}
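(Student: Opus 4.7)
The plan is to prove this tail bound by reducing to a one-dimensional problem via the rotational invariance of the uniform measure on the sphere, then controlling the resulting cap integral with elementary estimates.

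First, I would apply rotational invariance: since $\mathbf{Q}U \stackrel{d}{=} U$ for any orthogonal $\mathbf{Q}$, I can choose $\mathbf{Q}$ so that $\mathbf{Q}\mf v = \|\mf v\|_2 e_1$. This reduces the problem to bounding $Pr(|U_1| \geq t)$ for the first coordinate $U_1$ (the scaling by $\|\mf v\|_2$ is absorbed into $t$, and the bound as stated is really only meaningful when $\mf v$ is a unit vector, which is how the lemma is invoked via the elliptical representation $\mf x = \mu \mf B U$). A standard computation in polar coordinates on $\mathbb{S}^{d-1}$ gives the marginal density of $U_1$ as $f_{U_1}(x) = c_d (1-x^2)^{(d-3)/2}$ on $[-1,1]$ for a normalizing constant $c_d$, yielding
\[
Pr(|U_1| \geq t) = \frac{2\int_t^1 (1-x^2)^{(d-3)/2}\, dx}{\int_{-1}^1 (1-x^2)^{(d-3)/2}\, dx}.
\]

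Next I would bound the numerator and denominator separately. For the numerator, the elementary inequality $1 - x^2 \leq e^{-x^2}$ gives
\[
\int_t^1 (1-x^2)^{(d-3)/2}\, dx \leq \int_t^\infty e^{-(d-3)x^2/2}\, dx,
\]
which is a truncated Gaussian tail controllable by Mill's ratio. For the denominator, the substitution $x = y/\sqrt{d-3}$ combined with dominated convergence shows it is bounded below by a constant multiple of $1/\sqrt{d}$. Combining these yields an exponential bound of the form $(\text{const}/t\sqrt{d})\,e^{-(d-3)t^2/2}$, which already captures the correct Gaussian scaling.

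The main obstacle is sharpening this to the precise form $e^{-dt^2}$, with constant $d$ in the exponent rather than $(d-3)/2$ and no polynomial prefactor. The cleanest route is a Chernoff/MGF argument: since $U_1^2 \sim \text{Beta}(1/2, (d-1)/2)$, one can compute its moment generating function explicitly and verify $\expect{e^{\lambda U_1^2}} \leq (1 - 2\lambda/d)^{-1/2}$ for $\lambda < d/2$, after which $Pr(|U_1| \geq t) \leq e^{-\lambda t^2}(1 - 2\lambda/d)^{-1/2}$ and optimization in $\lambda$ delivers the claimed exponent. Alternatively, one can follow Ball's original geometric approach, which uses a measure-preserving comparison between the spherical cap $\{|u_1| \geq t\}$ and a cylindrical slab in one higher dimension to obtain the sharp constant directly, bypassing any explicit integral estimates altogether.
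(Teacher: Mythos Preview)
The paper does not prove this statement; it is quoted directly from \citep{ball1997elementary}, and is invoked only to conclude that $\sqrt{d}\,U$ has bounded $\psi_2$-norm (Lemma~\ref{lem:normalization}), for which any bound of the form $Ce^{-c\,dt^2}$ with absolute constants $C,c>0$ is enough. So there is no in-paper argument to compare your proposal against.

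Your outline is nonetheless a sound analytic route. The rotational reduction and the identification $U_1^2\sim\mathrm{Beta}\bigl(\tfrac12,\tfrac{d-1}{2}\bigr)$ are correct, and termwise moment comparison does give $\expect{e^{\lambda U_1^2}}\le (1-2\lambda/d)^{-1/2}$; Chernoff optimization then yields an exponent of order $dt^2/2$. You need not worry about sharpening further: Ball's own bound is $e^{-dt^2/2}$ for a single cap, so the exponent $-dt^2$ printed here slightly overstates the cited result, and only the sub-Gaussian order matters downstream. Ball's actual argument is purely geometric and differs from both of your routes (it is not the cylindrical-slab comparison you mention): one checks that the cap $\{u_1\ge t\}$ lies inside the Euclidean ball of radius $\sqrt{1-t^2}$ centered at $te_1$, and an expanding radial projection onto the boundary of that ball shows the cap's surface area is at most that of a hemisphere of radius $\sqrt{1-t^2}$, giving the bound $(1-t^2)^{(d-1)/2}\le e^{-(d-1)t^2/2}$ in one line. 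His approach delivers the sharp constant with no integral or MGF computation; yours is more systematic and generalizes more readily to other measures.
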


\begin{lemma}\label{lem:bound-mu}
Suppose $\mf x$ is an elliptical symmetric vector with the decomposition \eqref{elliptical-definition} satisfying Assumption \ref{assumption:moment} for some $q>4$, then, $\expect{\l|\mu/\sqrt{d}\r|^q}\leq \frac{\nu_q^q}{\kappa^{q/2}}$.
\end{lemma}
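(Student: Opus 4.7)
The plan is to extract the desired moment bound on $\mu/\sqrt{d}$ from the coordinate-wise bound $\|x_i\|_{L_q}\leq \nu_q$ in Assumption~\ref{assumption:moment}, by factoring each $x_i$ into independent pieces (coming from $\mu$ and $U$) and then controlling the two resulting factors via rotational invariance of $U$ and non-degeneracy of $\mathbf{\Sigma}$.

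I will fix any $i\in\{1,\ldots,d\}$ and let $\mathbf{b}_i$ denote the $i$-th row of $\mathbf{B}$, so that $x_i = \mu\,\mathbf{b}_i^T U$. Assuming $\mathbf{b}_i\neq 0$ (otherwise $x_i\equiv 0$ and there is nothing to check), set $\mathbf{u}_i := \mathbf{b}_i/\|\mathbf{b}_i\|_2\in\mathbb{S}^{d-1}$. By independence of $\mu$ and $U$,
\[
\expect{|x_i|^q} = \expect{|\mu|^q}\,\|\mathbf{b}_i\|_2^q\,\expect{|\dotp{U}{\mathbf{u}_i}|^q}.
\]
I will handle the two factors on the right separately. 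For $\|\mathbf{b}_i\|_2$, note that $\|\mathbf{b}_i\|_2^2=(\mathbf{B}\mathbf{B}^T)_{ii}=\mathbf{\Sigma}_{ii}\geq \inf_{\mathbf{v}\in\mathbb{S}^{d-1}}\mathbf{v}^T\mathbf{\Sigma}\mathbf{v}\geq \kappa$, invoking the non-degeneracy clause. For the angular factor, rotational invariance of $U$ ensures $\expect{|\dotp{U}{\mathbf{u}_i}|^q}$ depends only on $d$ and $q$, and since $U$ lies on the unit sphere and is rotationally invariant one has $\expect{UU^T}=\mathbf{I}_{d\times d}/d$, giving $\expect{|\dotp{U}{\mathbf{u}_i}|^2}=1/d$. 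Jensen's inequality applied to the convex map $t\mapsto t^{q/2}$ (convex since $q>4\geq 2$) then yields
\[
\expect{|\dotp{U}{\mathbf{u}_i}|^q}=\expect{\bigl(|\dotp{U}{\mathbf{u}_i}|^2\bigr)^{q/2}}\geq \l(\expect{|\dotp{U}{\mathbf{u}_i}|^2}\r)^{q/2}=d^{-q/2}.
\]

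Combining these two lower bounds with the upper bound $\expect{|x_i|^q}\leq \nu_q^q$ gives $\nu_q^q\geq \expect{|\mu|^q}\,\kappa^{q/2}\,d^{-q/2}$, and dividing through by $\kappa^{q/2}d^{-q/2}$ produces exactly $\expect{|\mu/\sqrt{d}|^q}\leq \nu_q^q/\kappa^{q/2}$. The argument is short and contains no real obstacle; the only point worth flagging is the normalization $\expect{UU^T}=\mathbf{I}_{d\times d}/d$, which is forced by $\|U\|_2=1$ together with the rotational-invariance property $\expect{UU^T}\propto\mathbf{I}_{d\times d}$ (taking the trace pins down the constant).
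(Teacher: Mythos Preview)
Your proof is correct and follows essentially the same route as the paper: factor $x_i=\mu\dotp{U}{\mathbf{B}^T\mathbf{e}_i}$ via independence, lower-bound the angular moment by its $L_2$ norm through Jensen (the paper phrases this as $\|\cdot\|_{L_q}\geq\|\cdot\|_{L_2}$), and use non-degeneracy to get $\|\mathbf{B}^T\mathbf{e}_i\|_2^2=\Sigma_{ii}\geq\kappa$. One small remark: your parenthetical ``otherwise $x_i\equiv 0$ and there is nothing to check'' is slightly misleading, since if $\mathbf{b}_i=0$ you would simply get no information about $\mu$ from that coordinate; but this is moot because non-degeneracy guarantees $\|\mathbf{b}_i\|_2^2\geq\kappa>0$ for every $i$.
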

\begin{proof}[Proof of Lemma \ref{lem:bound-mu}]
Note that for the $\mf x$ such that $\expect{|x_i|^q}\leq \nu_q^q,~i=1,2,\cdots,d$, we have by \eqref{elliptical-definition},
$\expect{|\mu\dotp{U}{\mf B^T\mf e_i}|^q}\leq\nu_q^q$, for any unit coordinate vector $\mf e_i,~i=1,~2,\cdots,~d$. Since $\mu$ and $U$ are independent, we have $\expect{\l|\mu\r|^q}\expect{|\dotp{U}{\mf B^T\mf e_i}|^q}\leq\nu_q^q$ and thus, 
$$\expect{\l|\mu/\sqrt{d}\r|^q}\leq \nu_q^q \l/\expect{\l|\dotp{\sqrt{d}U}{\mf B^T\mf e_i}\r|^q}\r..$$ 
Since 
$$\expect{\l|\dotp{\sqrt{d}U}{\mf B^T\mf e_i}\r|^q}^{1/q}\geq\expect{\l|\dotp{\sqrt{d}U}{\mf B^T\mf e_i}\r|^2}^{1/2} = \|\mf B^T\mf e_i\|_2\geq\sqrt{\kappa},$$
where the last inequality follows from the non-degeneracy property of the covariance matrix $\mf\Sigma= \mf B\mf B^T$.
\end{proof}

\begin{lemma}\label{lem:normalization}
Suppose $\mf x$ is an elliptical symmetric vector with the decomposition \eqref{elliptical-definition}, then, $\sqrt{d}\mf x/\|\mf x\|_2$ is a sub-Gaussian random vector.
\end{lemma}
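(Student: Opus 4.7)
The plan is to reduce the question to the sub-Gaussian behavior of the uniform distribution on $\mathbb{S}^{d-1}$ recorded in Lemma \ref{lem:sub-gaussian}, by deterministically bounding the random normalizing denominator $\|\mathbf{x}\|_2$ from above and below. First I would use the elliptical decomposition \eqref{elliptical-definition} to write
\[
\frac{\sqrt{d}\,\mathbf{x}}{\|\mathbf{x}\|_2} \stackrel{d}{=} \frac{\sqrt{d}\,\mu \mathbf{B} U}{|\mu|\,\|\mathbf{B}U\|_2} = \sign(\mu)\cdot \frac{\sqrt{d}\,\mathbf{B}U}{\|\mathbf{B}U\|_2},
\]
so the scalar $\mu$ disappears (up to sign), and it suffices to prove that $\mathbf{Z}:=\sqrt{d}\,\mathbf{B}U/\|\mathbf{B}U\|_2$ is sub-Gaussian.

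Next I would record the key deterministic bracket on the denominator. Since $\mathbf{B}\mathbf{B}^T=\mathbf{\Sigma}$, the matrices $\mathbf{B}^T\mathbf{B}$ and $\mathbf{\Sigma}$ share spectrum, so for $U\in\mathbb{S}^{d-1}$,
\[
\lambda_{\min}(\mathbf{\Sigma}) \;\le\; \|\mathbf{B}U\|_2^{2}=U^T\mathbf{B}^T\mathbf{B}U \;\le\; \lambda_{\max}(\mathbf{\Sigma}).
\]
Assumption \ref{assumption:moment} controls both sides: non-degeneracy gives $\lambda_{\min}(\mathbf{\Sigma})\ge \kappa$, while the bounded kurtosis and Jensen give $\lambda_{\max}(\mathbf{\Sigma})=\sup_{\mathbf{v}\in\mathbb{S}^{d-1}}\mathbb{E}|\langle \mathbf{v},\mathbf{x}\rangle|^2 \le \sqrt{\nu}$. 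Hence $\sqrt{\kappa}\le \|\mathbf{B}U\|_2\le \nu^{1/4}$ \emph{almost surely}. Similarly, for any unit $\mathbf{v}$, $\|\mathbf{B}^T\mathbf{v}\|_2\le \sqrt{\lambda_{\max}(\mathbf{\Sigma})}\le \nu^{1/4}$.

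Then, for any fixed $\mathbf{v}\in\mathbb{S}^{d-1}$, I would compute
\[
\langle \mathbf{v},\mathbf{Z}\rangle = \frac{\sqrt{d}\,\langle \mathbf{B}^T\mathbf{v},\,U\rangle}{\|\mathbf{B}U\|_2},
\]
and apply Lemma \ref{lem:sub-gaussian} to the fixed vector $\mathbf{B}^T\mathbf{v}$: for every $t\in(0,\|\mathbf{B}^T\mathbf{v}\|_2)$,
\[
\Pr\!\left(|\langle \mathbf{B}^T\mathbf{v},U\rangle|\ge t\right)\le \exp\!\left(-\frac{dt^2}{\|\mathbf{B}^T\mathbf{v}\|_2^{2}}\right).
\]
Combining with the lower bound $\|\mathbf{B}U\|_2\ge\sqrt{\kappa}$ and setting $t=s\sqrt{\kappa/d}$ yields, for all $s$ in the admissible range,
\[
\Pr\!\left(|\langle \mathbf{v},\mathbf{Z}\rangle|\ge s\right)\le \exp\!\left(-\frac{\kappa s^{2}}{\sqrt{\nu}}\right).
\]
A standard tail-to-moment conversion then bounds the moments $(\mathbb{E}|\langle \mathbf{v},\mathbf{Z}\rangle|^p)^{1/p}\le C\sqrt{p}(\nu/\kappa^2)^{1/4}$ uniformly in $\mathbf{v}$, which is exactly $\|\langle \mathbf{v},\mathbf{Z}\rangle\|_{\psi_2}\le C'(\nu/\kappa^2)^{1/4}$, establishing sub-Gaussianity of $\sqrt{d}\,\mathbf{x}/\|\mathbf{x}\|_2$.

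The only mild subtlety is the random denominator; the main point of the proof is that non-degeneracy of $\mathbf{\Sigma}$ forces $\|\mathbf{B}U\|_2$ to stay away from zero almost surely, so the ratio inherits the sub-Gaussian concentration of $\langle \mathbf{B}^T\mathbf{v},U\rangle$ without any additional probabilistic argument. The dependence on $\mu$ trivializes because the normalization $\mathbf{x}/\|\mathbf{x}\|_2$ kills $|\mu|$.
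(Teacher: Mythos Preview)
Your proposal is correct and follows essentially the same approach as the paper: both cancel $\mu$ via the normalization, use the deterministic lower bound $\|\mathbf{B}U\|_2\ge\sqrt{\lambda_{\min}(\mathbf{\Sigma})}$ from non-degeneracy, and then invoke the sub-Gaussianity of $\sqrt{d}\,U$ from Lemma~\ref{lem:sub-gaussian}. The only cosmetic difference is that the paper bounds the $p$-th moments of $\langle\mathbf{v},\mathbf{Z}\rangle$ directly, whereas you first derive the sub-Gaussian tail and then convert to moments; the constants you obtain, $C(\nu/\kappa^2)^{1/4}$, match the paper's $\|\sqrt{d}U\|_{\psi_2}\sqrt{\lambda_{\max}(\mathbf{\Sigma})/\lambda_{\min}(\mathbf{\Sigma})}$ up to the same quantities.
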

\begin{proof}[Proof of Lemma \ref{lem:normalization}]
It is enough to check $\l\|\sqrt{d}\mf x/\|\mf x\|_2  \r\|_{\psi_2}$ is bounded.
For any $p\geq1$ and any $\mf v\in\mb R^d$,
\begin{align*}
\expect{\l| \dotp{\frac{\sqrt{d}\mf x}{\|\mf x\|_2}}{\mf v} \r|^p}^{1/p}=& \expect{\l| \dotp{\frac{\sqrt{d}\mu\mf B U}{|\mu|\|\mf B U\|_2}}{\mf v} \r|^p}^{1/p}\\
=&\expect{\l| \dotp{ \sqrt{d} U}{\mf B^T\mf v}\cdot \frac{1}{\|\mf B U\|_2} \r|^p}^{1/p}\\
\leq&\expect{\l| \dotp{ \sqrt{d} U}{\mf B^T\mf v} \r|^p}^{1/p}\cdot\frac{1}{\sqrt{\lambda_{\min}(\mf{\Sigma})}}\\
\leq& \sqrt{p}\|\sqrt{d}U\|_{\psi_2}\l\|\mf B^T\mf v\r\|_2\cdot\frac{1}{\sqrt{\lambda_{\min}(\mf{\Sigma})}}\leq \sqrt{p} \|\sqrt{d}U\|_{\psi_2}\sqrt{\frac{\lambda_{\max}(\mf{\Sigma})}{\lambda_{\min}(\mf{\Sigma})}}.
\end{align*}
By Lemma \ref{lem:sub-gaussian}, $\sqrt{d}U$ is sub-Gaussian and thus $\|\sqrt{d}U\|_{\psi_2}$ is bounded by a constant. This finishes the proof.
\end{proof}

\subsection{Computing critical radiuses}
Let $\delta$ and $Q$ be the same as that of Lemma \ref{lem:small-ball}, i.e. $\delta = \frac12\sqrt{\frac{\kappa}{2}}$ and $Q=\frac{\kappa^2}{8\psi}$,  then 
we set $\Lambda_Q := \delta^2Q^2/4$, $\Lambda_M:= \delta^2Q^2/64$, $\Lambda_{\mc V}:=  \delta^2Q^2/64$ and
 $\frac{\delta^2Q^2}{8}\frac{r(\rho)^2}{\rho}\leq \lambda\leq \frac{5\delta^2Q^2}{32}\frac{r(\rho)^2}{\rho}$. Then, we have $\Lambda_Q> 2(\Lambda_M+\Lambda_{\mc{V}})+\frac{5\delta^2Q^2}{32}$ 
and $\frac{\delta^2Q^2}{8}\geq4(\Lambda_M+\Lambda_{\mc{V}})$, satisfying the assumptions in Theorem \ref{thm:master}. We aim to bound the critical radiuses $r_{\mathcal{Q}},~r_{\mathcal{M}},~r_{\mc V}$ and show there exists $\rho>0$ such that $\Delta(\theta_*,\rho)\geq\frac34\rho$.

\subsubsection{Bounding the radius $r_{\mathcal{Q}}$}
We start with a version of truncated small-ball for elliptical symmetric vectors which is stronger than Lemma \ref{lem:weak-small-ball} in the last section.

\begin{lemma}\label{lem:small-ball-2}
Suppose $\mf x$ is an elliptical symmetric vector with the decomposition \eqref{elliptical-definition} satisfying Assumption \ref{assumption:moment} for some $q>4$. 
Suppose $N\geq\l.\frac{4}{Q^2}\expect{\mu^2}^2\lambda_{\max}(\mf{\Sigma})\r/d^2$, then, we have for any $\mf v\in\mb R^d$,
\[
Pr(\l| \dotp{\wt{\mf x}}{\mf v} \r|\geq\delta\|\mf v\|_2)\geq Q,
\]
where 
$\wt{\mf x}=\frac{\sqrt{d}\mathbf{x}}{\|\mathbf{x}\|_2}\cdot\l( \frac{\|\mathbf{x}\|_2}{\sqrt{d}}\wedge\tau \r)$ with $\tau = N^{2/(q+4)}$.
\end{lemma}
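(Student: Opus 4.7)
I would follow the same two-term strategy used to prove Lemma~\ref{lem:weak-small-ball}. Write
\[
|\dotp{\wt{\mf x}}{\mf v}| \geq |\dotp{\mf x}{\mf v}| - |\dotp{\mf x - \wt{\mf x}}{\mf v}|,
\]
so that, as in equation \eqref{inter-0},
\[
Pr\l(|\dotp{\wt{\mf x}}{\mf v}|\geq \delta\|\mf v\|_2\r) \geq Pr\l(|\dotp{\mf x}{\mf v}|\geq 2\delta\|\mf v\|_2\r) - Pr\l(|\dotp{\mf x - \wt{\mf x}}{\mf v}|\geq \delta\|\mf v\|_2\r).
\]
The first term is at least $2Q$ by the classical small-ball estimate of Lemma~\ref{lem:small-ball}, which already holds under Assumption~\ref{assumption:moment}. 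The whole task then reduces to showing that the bias term is at most $Q$ for every $\mf v \in \mb R^d$.

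The key structural observation is that the truncation in \eqref{eq:trunc2} only rescales the magnitude of $\mf x$, so $\mf x - \wt{\mf x}$ is \emph{parallel} to $\mf x$ with $\|\mf x - \wt{\mf x}\|_2 = (\|\mf x\|_2 - \tau\sqrt{d})_+$. A single application of Cauchy--Schwarz then yields, \emph{uniformly in $\mf v$},
\[
|\dotp{\mf x - \wt{\mf x}}{\mf v}| \leq \|\mf v\|_2\,(\|\mf x\|_2 - \tau\sqrt{d})_+,
\]
which collapses the bias term to the scalar probability $Pr(\|\mf x\|_2 > \tau\sqrt{d})$. This is precisely where the elliptical assumption is stronger than what was available in Lemma~\ref{lem:weak-small-ball}: there is no longer any need to control the projection of $\mf x - \wt{\mf x}$ onto the coordinates of $\mf v$ by a union bound, so the resulting small-ball inequality holds for \emph{every} $\mf v$, not only sparse ones.

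To bound $Pr(\|\mf x\|_2 > \tau\sqrt{d})$ I would use the elliptical decomposition $\mf x = \mu \mf B U$, the independence of $\mu$ and $U$, and Markov's inequality with the second moment:
\[
Pr\l(\|\mf x\|_2 > \tau\sqrt{d}\r) \leq \frac{\expect{\mu^2\|\mf B U\|_2^2}}{\tau^2 d} = \frac{\expect{\mu^2}\,\tr(\mf\Sigma)/d}{\tau^2 d}\leq \frac{\expect{\mu^2}\lambda_{\max}(\mf\Sigma)}{\tau^2 d},
\]
using $\expect{U U^T} = \mf I_{d\times d}/d$. Substituting $\tau = N^{2/(q+4)}$ and invoking the sample-size hypothesis $N \geq 4\expect{\mu^2}^2\lambda_{\max}(\mf\Sigma)/(Q^2 d^2)$ then makes the right-hand side at most $Q$, which combined with the $2Q$ lower bound from the first step closes the proof.

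The main obstacle I anticipate is matching the exact form of the $N$-threshold in the statement. The straight Chebyshev bound above produces a condition of the form $\tau^2 \gtrsim \expect{\mu^2}\lambda_{\max}(\mf\Sigma)/(Qd)$, which---after using $\tau^2 \approx \sqrt{N}$ in the $q=4+\epsilon$ regime---converts into an $N$-threshold with $\lambda_{\max}(\mf\Sigma)^2$ rather than the single $\lambda_{\max}(\mf\Sigma)$ appearing in the hypothesis. Recovering one factor of $\lambda_{\max}$ likely requires splitting the tail event $\{|\mu|\|\mf B U\|_2 > \tau\sqrt{d}\}$ into $\{|\mu| > a\}\cup\{\|\mf B U\|_2 > b\}$ with $a,b$ chosen to balance the two Markov bounds, or exploiting the sub-Gaussian concentration of $\sqrt{d}\,U$ from Lemma~\ref{lem:sub-gaussian} so that $\|\mf B U\|_2^2$ concentrates around $\tr(\mf\Sigma)/d$ instead of being controlled by its crude pointwise bound $\lambda_{\max}(\mf\Sigma)$.
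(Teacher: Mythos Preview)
Your proposal is correct and matches the paper's proof essentially line for line: the same triangle-inequality splitting, the same reduction of the bias term to $Pr(\|\mf x\|_2>\tau\sqrt d)$ via the observation that the truncation only rescales $\mf x$, and the same second-moment Markov bound on that tail using $\expect{\mu^2\|\mf B U\|_2^2}\le \expect{\mu^2}\lambda_{\max}(\mf\Sigma)$. Your worry about the exponent of $\lambda_{\max}(\mf\Sigma)$ is justified and is not a gap in your argument: the paper's own calculation also produces the condition $\sqrt N\ge \expect{\mu^2}\lambda_{\max}(\mf\Sigma)/(Qd)$, i.e.\ $N\gtrsim \expect{\mu^2}^2\lambda_{\max}(\mf\Sigma)^2/(Q^2d^2)$, so the single power in the lemma statement appears to be a typo rather than something you need to recover by a sharper argument.
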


\begin{proof}[Proof of Lemma \ref{lem:small-ball-2}]
For any $\mf v\in\mc R^d$, we have
\begin{align*}
\l| \dotp{\wt{\mf x}}{\mf v} \r| = \l| \dotp{\wt{\mf x} - \mf x}{\mf v} + \dotp{\mf x}{\mf v} \r|
\geq \l| \dotp{\mf x}{\mf v} \r| -  \l| \dotp{\wt{\mf x} - \mf x}{\mf v}\r|.
\end{align*}
By \eqref{inter-0}, we have
\begin{align}
Pr\l( \l| \dotp{\widetilde{\mathbf{x}}_i}{\mathbf{v}} \r| \geq \delta\|\mathbf{v}\|_2 \r) \geq
Pr\l( \l| \dotp{\mathbf{x}_i}{\mathbf{v}} \r| \geq 2\delta\|\mathbf{v}\|_2 \r)  
- 
Pr\l( \l| \dotp{\widetilde{\mathbf{x}}_i - \mathbf{x}_i}{\mathbf{v}}  \r|\geq\delta\|\mathbf{v}\|_2 \r), \label{inter-1}
\end{align}
Then, 
\begin{align*}
Pr\l(\l| \dotp{\wt{\mf x} - \mf x}{\mf v}\r| \geq\delta\|\mf v\|_2 \r)
=& Pr\l(\l| \dotp{\frac{\sqrt{d}\mathbf{x}}{\|\mathbf{x}\|_2}}{\mf v}\r|\cdot
\l|\l( \frac{\|\mathbf{x}\|_2}{\sqrt{d}}\wedge\tau \r) - \frac{\|\mathbf{x}\|_2}{\sqrt{d}} \r| \geq\delta\|\mf v\|_2 \r)\\
\leq& Pr\l(\|\mf x\|_2\geq \sqrt{d}\tau\r),
\end{align*}
where the inequality follows from the fact that if $\|\mf x\|_2< \sqrt{d}\tau$, then, the truncation does not activate and the difference on the left hand side is equal to 0. By Markov inequality, we can bound the probability as
\begin{multline*}
Pr\l(\|\mf x\|_2\geq \sqrt{d}\tau\r)\leq \frac{\expect{\|\mf x\|_2^2}}{d\tau^2} = \frac{\expect{\mu^2\|\mf B U\|_2^2}}{d\tau^2}
\leq\frac{\expect{\mu^2}}{d\tau^2}\lambda_{\max}(\mf{\Sigma}) \leq \frac{\expect{\mu^2}}{d\sqrt{N}}\lambda_{\max}(\mf{\Sigma}),
\end{multline*}
Thus, when $N\geq\l.\frac{4}{Q^2}\expect{\mu^2}^2\lambda_{\max}(\mf{\Sigma})\r/d^2$, 
\[
Pr\l(\l| \dotp{\wt{\mf x} - \mf x}{\mf v}\r| \geq\delta\|\mf v\|_2 \r)\leq
Pr\l(\|\mf x\|_2\geq \sqrt{d}\tau\r)\leq  Q.
\]
On the other hand, from Lemma \ref{lem:small-ball}, we have $Pr\l( \l| \dotp{\mathbf{x}_i}{\mathbf{v}} \r| \geq 2\delta\|\mathbf{v}\|_2 \r)\geq2Q$, which implies the claim by substituting the above two bounds into \eqref{inter-1}.
\end{proof}

Next, we upgrade the small-ball probability to the lower-tail estimate of the quadratic form. Since the small ball condition in Lemma \ref{lem:small-ball-2} is stronger than that of Lemma \ref{lem:weak-small-ball} in the last section, instead of the VC dimension argument, we can simply invoke the following lemma:

\begin{lemma}[\citep{mendelson2014learning}]\label{lem:mendelson}
Let $\mc H \subseteq \mb S^{d-1}$ and define
\[
\omega_N(\mc H) := \expect{\sup_{\mf h\in \mc H}\frac{1}{\sqrt{N}} \sum_{i=1}^N\varepsilon_i\dotp{\mf x_i}{\mf h}}.
\]
Suppose $Pr(\l| \dotp{\mf x}{\mf h} \r|\geq\delta\|\mf h\|_2)\geq Q,~\forall \mf h\in\mathbb{R}^d$, then, it follows
\[
\inf_{\mf h\in \mc H}\l( \sum_{i=1}^N\dotp{\mf x_i}{\mf h}^2  \r)^{1/2}\geq \delta Q\sqrt{N} - 2\omega_N(\mc H) - \frac{\delta t}{2},
\]
with probability at least $1-ce^{-t^2}$ for any $t>0$.
\end{lemma}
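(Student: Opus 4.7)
The plan is to carry out the classical small-ball argument, reducing the lower bound on the quadratic form to a Rademacher-complexity estimate through a Lipschitz surrogate. First, I would apply Cauchy--Schwarz in the form
\[
\l(\sum_{i=1}^N\dotp{\mf x_i}{\mf h}^2\r)^{1/2}
\;\geq\; \frac{1}{\sqrt N}\sum_{i=1}^N \l|\dotp{\mf x_i}{\mf h}\r|,
\]
and then use the elementary inequality $|u|\geq \delta\,\phi(u)$ for $\phi(u):=\min(|u|/\delta,\,1)$, which is $(1/\delta)$-Lipschitz with $\phi(0)=0$ and satisfies $\phi(u)\geq \mathbf 1_{\{|u|\geq \delta\}}$. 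Since $\mc H\subseteq\mathbb S^{d-1}$, the small-ball hypothesis gives $\expect{\phi(\dotp{\mf x}{\mf h})}\geq Pr(|\dotp{\mf x}{\mf h}|\geq\delta)\geq Q$ uniformly in $\mf h\in\mc H$, so
\[
\inf_{\mf h\in\mc H}\l(\sum_{i=1}^N\dotp{\mf x_i}{\mf h}^2\r)^{1/2}
\;\geq\; \delta\sqrt N\,\l(Q\;-\;\sup_{\mf h\in\mc H}\Big(\expect{\phi(\dotp{\mf x}{\mf h})}-\tfrac{1}{N}\sum_{i=1}^N\phi(\dotp{\mf x_i}{\mf h})\Big)\r).
\]

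Next I would control the uniform deviation on the right-hand side by symmetrization followed by Ledoux--Talagrand contraction. The standard symmetrization inequality yields
\[
\expect{\sup_{\mf h\in\mc H}\Big(\expect{\phi(\dotp{\mf x}{\mf h})}-\tfrac{1}{N}\sum_{i=1}^N\phi(\dotp{\mf x_i}{\mf h})\Big)}
\;\leq\; \frac{2}{N}\expect{\sup_{\mf h\in\mc H}\sum_{i=1}^N\varepsilon_i\phi(\dotp{\mf x_i}{\mf h})},
\]
and contraction, applicable because $\phi$ is $(1/\delta)$-Lipschitz with $\phi(0)=0$, bounds the right-hand side by
$\frac{2}{N\delta}\expect{\sup_{\mf h\in\mc H}\sum_i\varepsilon_i\dotp{\mf x_i}{\mf h}} = \frac{2\omega_N(\mc H)}{\delta\sqrt N}$. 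Multiplying through by $\delta\sqrt N$ converts this to the target bias term $2\omega_N(\mc H)$.

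To pass from expectation to a high-probability bound, I would invoke the bounded-differences (McDiarmid) inequality: since $\phi\in[0,1]$, changing a single $\mf x_i$ alters $\tfrac{1}{N}\sum_i\phi(\dotp{\mf x_i}{\mf h})$ by at most $1/N$, hence alters the supremum by the same amount; this gives a sub-Gaussian tail at scale $1/\sqrt N$, so the supremum exceeds its mean by at most $t/(2\sqrt N)$ with probability at least $1-ce^{-t^2}$. Reassembling, with that probability
\[
\inf_{\mf h\in\mc H}\l(\sum_{i=1}^N\dotp{\mf x_i}{\mf h}^2\r)^{1/2}
\;\geq\; \delta Q\sqrt N - 2\omega_N(\mc H) - \frac{\delta t}{2},
\]
which is the claim. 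The only delicate point is choosing the Lipschitz surrogate $\phi$ so that it simultaneously lies below the indicator for the contraction bound and has expectation at least $Q$ for the small-ball lower bound; the choice $\phi(u)=\min(|u|/\delta,1)$ threads this needle because $\phi\geq\mathbf 1_{\{|\cdot|\geq\delta\}}$ while $|u|\geq\delta\phi(u)$ pointwise, avoiding the need to assume a small-ball condition at any level larger than $\delta$.
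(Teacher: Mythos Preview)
Your proof is correct and follows precisely the classical small-ball method argument of \citep{mendelson2014learning}; the paper itself does not supply a proof of this lemma but simply invokes Mendelson's result as a black box, so there is nothing further to compare. The only cosmetic remark is that McDiarmid's inequality in your concentration step yields $e^{-t^2/2}$ rather than $e^{-t^2}$, but this is absorbed by the unspecified constant $c$ in the stated probability bound.
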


The main Lemma leading to the bound on $r_{\mathcal{Q}}$ is the following,
\begin{lemma}\label{lem:lower-tail-Q}
Suppose $N\geq\frac{4}{Q^2}\frac{\expect{\mu^2}^2\lambda_{\max}(\mf{\Sigma})}{d^2}+\beta^2(\omega(S_{2}(0,r)\cap B_\Psi(0,\rho))+r)^2$, then, with probability at least $1-ce^{-t^2}$
for every $\theta \in S_{2}(\eta\theta_*,r)\cap B_\Psi(\eta\theta_*,\rho)$,
\[
\l| \frac1N\sum_{i=1}^N\dotp{\wt {\mf x}_i}{\theta - \eta \theta_*}^2 \r|^{1/2}\geq \l(\delta Q  - \frac{\delta t}{\sqrt{N}}\r)r - C(\nu_q,\kappa)
\frac{\omega(S_{2}(0,r)\cap B_\Psi(0,\rho)) + r}{\sqrt{N}},
\]
where $C(\nu_q,\kappa)$ is a constant depending only on $\nu_q$ and $\kappa$ in Assumption \ref{assumption:moment}.
\end{lemma}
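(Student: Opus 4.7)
The plan is to combine the truncated small-ball estimate of Lemma \ref{lem:small-ball-2} with Mendelson's generic lower bound on random quadratic forms (Lemma \ref{lem:mendelson}), and then to control the resulting Rademacher complexity by a Gaussian mean width using the elliptical structure of the truncated measurements.

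First I would normalize the indexing set. Set $\mc H := r^{-1}\bigl(S_2(\eta\theta_*,r)\cap B_\Psi(\eta\theta_*,\rho) - \eta\theta_*\bigr)\subseteq\mathbb{S}^{d-1}$, so that for any $\theta$ in the indexing set, $\theta-\eta\theta_* = r\mf h$ for some $\mf h\in\mc H$, and $r\mc H = S_2(0,r)\cap B_\Psi(0,\rho)$. The sample-size hypothesis $N\geq\frac{4}{Q^2}\expect{\mu^2}^2\lambda_{\max}(\mf\Sigma)/d^2$ activates Lemma \ref{lem:small-ball-2}, giving $Pr(|\dotp{\wt{\mf x}}{\mf h}|\geq\delta)\geq Q$ uniformly over $\mf h\in\mathbb{S}^{d-1}$. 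With this small-ball property in hand, Lemma \ref{lem:mendelson} applied to $\wt{\mf x}_1,\dots,\wt{\mf x}_N$ yields, with probability at least $1-ce^{-t^2}$,
\[
\inf_{\mf h\in\mc H}\l(\sum_{i=1}^N\dotp{\wt{\mf x}_i}{\mf h}^2\r)^{1/2}\geq\delta Q\sqrt{N} - 2\omega_N(\mc H) - \frac{\delta t}{2},
\]
where $\omega_N(\mc H) := \expect{\sup_{\mf h\in\mc H}N^{-1/2}\sum_i\varepsilon_i\dotp{\wt{\mf x}_i}{\mf h}}$. Rescaling by $r/\sqrt{N}$ converts the left side into $|\frac1N\sum_i\dotp{\wt{\mf x}_i}{\theta-\eta\theta_*}^2|^{1/2}$ and produces the $\delta t/\sqrt{N}$ slack in the claim (since $\delta t/(2\sqrt{N})\leq\delta t/\sqrt{N}$), reducing the lemma to proving
\[
r\cdot\frac{2\omega_N(\mc H)}{\sqrt{N}}\leq C(\nu_q,\kappa)\frac{\omega(S_2(0,r)\cap B_\Psi(0,\rho))+r}{\sqrt{N}}.
\]

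The main obstacle is this last bound, which must convert the Rademacher-driven process based on $\wt{\mf x}_i$ into the Gaussian mean width of $r\mc H$ without picking up a factor proportional to the truncation threshold $\tau = N^{2/(q+4)}$. I would exploit the elliptical decomposition $\wt{\mf x}_i = c_i\sqrt{d}\,\mf B U_i/\|\mf B U_i\|_2$ with the scalar $c_i = (\|\mf x_i\|_2/\sqrt{d})\wedge\tau$ being pointwise bounded by $\tau$ but having uniformly bounded second moments independent of $\tau$ (via Lemma \ref{lem:bound-mu} and Assumption \ref{assume:single-index}). Conditioning on $(c_i)$ reduces $\omega_N(\mc H)$ to a multiplier process driven by $\varepsilon_i c_i$ and indexed through the sub-Gaussian directional variables $\sqrt{d}\mf B U_i/\|\mf B U_i\|_2$ (whose sub-Gaussianity is supplied by Lemma \ref{lem:normalization}). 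This is exactly the structure controlled by the truncated-multiplier-process bound in \citep{goldstein2016structured}, which delivers
\[
\omega_N(\mc H)\leq \frac{C(\nu_q,\kappa)}{r}\bigl(\omega(r\mc H)+r\bigr).
\]
Substituting this back yields the announced lower bound. The delicate point in invoking \citep{goldstein2016structured} is to arrange the decomposition so that only the $L_2$ size of the radial component $c_i$ (bounded by Lemma \ref{lem:bound-mu}) and the sub-Gaussian constant of the normalized direction enter the final constant, while the pointwise truncation by $\tau$ is used only to kill the tail event that appears inside the deviation inequality, not to scale the typical size of the increments.
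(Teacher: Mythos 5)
Your plan is correct and is essentially the paper's own proof: normalize to $\mc H = r^{-1}\bigl(S_2(\eta\theta_*,r)\cap B_\Psi(\eta\theta_*,\rho)-\eta\theta_*\bigr)$, invoke the truncated small-ball property from Lemma \ref{lem:small-ball-2} (activated by the $\frac{4}{Q^2}\expect{\mu^2}^2\lambda_{\max}(\mf\Sigma)/d^2$ part of the sample-size hypothesis), feed it into Lemma \ref{lem:mendelson}, and then control $\omega_N(\mc H)$ via the elliptical decomposition $\wt{\mf x}_i = \wt q_i\cdot\sqrt{d}\mf x_i/\|\mf x_i\|_2$ with the sub-Gaussianity of the normalized direction (Lemma \ref{lem:normalization}) and the truncated-multiplier bound Lemma \ref{lem:trunc-multi}, integrating the tail to get the expectation bound. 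One small slip: Lemma \ref{lem:trunc-multi} requires a bounded $L_{2(1+\epsilon)}$-moment on the multiplier and a pointwise bound at level $N^{1/(2(1+\epsilon))}$; since $\tau = N^{2/(q+4)}\le N^{1/4}$, the paper takes $\epsilon=1$ and verifies $\expect{|\wt q_i|^4}\le c(\nu_q,\kappa)$ via Lemma \ref{lem:bound-mu}, not merely a second-moment bound as you wrote -- though the ingredients you cite supply the required fourth moment without difficulty.
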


To prove this lemma, we need the following bound on the truncated multiplier process whose proof is similar to Lemma 5.9 of \citep{goldstein2016structured} via an improved generic chaining technique. For simplicity, we omitted the details of the proof here.

\begin{lemma}[\citep{goldstein2016structured}]\label{lem:trunc-multi}
Suppose $X_i,~i=1,2,\cdots,N$ are i.i.d. sub-Gaussian random vectors in $\mb R^d$ and $q_i,~i=1,2,\cdots,N$ are i.i.d. random variables in $\mb R$ such that 
$\expect{|q_i|^{2(1+\epsilon)}}\leq \tau_1$ for some constant $\tau_1>0$ and some $\epsilon>0$ and $|q_i|\leq \tau_2N^{1/2(1+\epsilon)}$ for some constant $\tau_2>0$. Then, for any compact set $T\in\mb R^d$, we have
\[
Pr\l( \sup_{\mf v\in T}\l| \frac1N\sum_{i=1}^N\varepsilon_i q_i\dotp{X_i}{\mf v} \r|
\geq C(\tau_1,\tau_2)\frac{\omega(T) + D_d(T)}{\sqrt{N}}\beta \r)\leq ce^{-\beta},
\] 
for absolute constant $c>0$, any $\beta \geq2$ and any $N\geq \beta^2(\omega(T) + D_d(T))^2$, where $D_d(T):= \sup_{\mf v\in T}\|\mf v\|_2$ and $C(\tau_1,\tau_2)$ is a constant depending only on $\tau_1$ and $\tau_2$.
\end{lemma}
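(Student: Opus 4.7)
The plan is to combine three ingredients already on the table: the truncated small-ball estimate of Lemma \ref{lem:small-ball-2}, Mendelson's quadratic small-ball lower bound (Lemma \ref{lem:mendelson}), and the truncated multiplier process bound of Lemma \ref{lem:trunc-multi}. Set $\mf v := \theta - \eta\theta_*$, so $\mf v$ ranges over the set $T := S_2(0,r) \cap B_\Psi(0,\rho)$, and by homogeneity the rescaled set $\mc H := T/r$ lies in $\mathbb{S}^{d-1}$.

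The first half of the sample-size hypothesis, $N \geq 4\expect{\mu^2}^2\lambda_{\max}(\mf\Sigma)/(Q^2 d^2)$, is exactly what Lemma \ref{lem:small-ball-2} requires, and so $\wt{\mf x}_i$ enjoys the uniform small-ball property $\Pr(|\dotp{\wt{\mf x}_i}{\mf h}| \geq \delta)\geq Q$ for every $\mf h \in \mathbb{S}^{d-1}$. Applying Lemma \ref{lem:mendelson} with $\mf x_i$ replaced by $\wt{\mf x}_i$ yields, with probability at least $1-ce^{-t^2}$,
\[
\inf_{\mf h \in \mc H}\l(\sum_{i=1}^N \dotp{\wt{\mf x}_i}{\mf h}^2\r)^{1/2} \geq \delta Q\sqrt{N} - 2\omega_N(\mc H) - \frac{\delta t}{2}.
\]
Dividing by $\sqrt{N}$ and then rescaling by $r$ (using homogeneity of the quadratic form in $\mf v$) converts this to
\[
\inf_{\mf v \in T}\l(\frac{1}{N}\sum_{i=1}^N \dotp{\wt{\mf x}_i}{\mf v}^2\r)^{1/2} \geq \l(\delta Q - \frac{\delta t}{2\sqrt{N}}\r)r - \frac{2r\,\omega_N(\mc H)}{\sqrt{N}}.
\]

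To control $r\,\omega_N(\mc H) = \expect{\sup_{\mf v \in T}\sqrt{N}\cdot\tfrac{1}{N}\sum_i \varepsilon_i \dotp{\wt{\mf x}_i}{\mf v}}$, decompose $\wt{\mf x}_i = q_i X_i$ with $X_i := \sqrt{d}\,\mf x_i/\|\mf x_i\|_2$ and $q_i := (\|\mf x_i\|_2/\sqrt{d})\wedge\tau$, where $\tau = N^{2/(q+4)}$. Lemma \ref{lem:normalization} gives that $X_i$ is sub-Gaussian with $\psi_2$-norm controlled by $\kappa$ and $\lambda_{\max}(\mf\Sigma)\leq\sqrt{\nu}$. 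Writing $q = 4(1+\epsilon)$ gives $2(1+\epsilon) = q/2$; combining $\|\mf B U\|_2\leq\sqrt{\lambda_{\max}(\mf\Sigma)}\leq\nu^{1/4}$ with Lemma \ref{lem:bound-mu} (which controls $\expect{|\mu/\sqrt{d}|^q}$) and Jensen's inequality yields $\expect{|q_i|^{q/2}}\leq\tau_1(\nu_q,\kappa,\nu)$; and $|q_i|\leq\tau = N^{1/(2(1+\epsilon))}$ so one may take $\tau_2 = 1$. With $D_d(T) = r$, the second half of the sample-size hypothesis $N\geq\beta^2(\omega(T)+r)^2$ permits Lemma \ref{lem:trunc-multi} at a fixed base $\beta$, and integrating its tail bound produces $r\,\omega_N(\mc H) \leq C(\nu_q,\kappa)(\omega(T)+r)$.

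Substituting this into the display above and absorbing absolute constants yields the asserted bound. The main technical care is needed at two spots: (i) passing from the high-probability tail of Lemma \ref{lem:trunc-multi} to the expectation $\omega_N(\mc H)$ required by Mendelson's method, which is routine via $\expect{Z}=\int_0^\infty \Pr(Z>s)\,ds$ but must handle the $\beta$-dependence appearing on both sides of the tail bound; and (ii) verifying the scalar-process moment bound $\expect{|q_i|^{q/2}}\leq\tau_1$ from the elliptical decomposition, since Assumption \ref{assumption:moment} bounds \emph{coordinate} moments rather than $\|\mf x\|_2$ directly --- here the representation $\mf x = \mu\mf B U$ together with Lemma \ref{lem:bound-mu} and the deterministic bound $\|\mf B U\|_2\leq\sqrt{\lambda_{\max}(\mf\Sigma)}$ closes the gap.
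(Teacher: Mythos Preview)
Your proposal is not a proof of Lemma~\ref{lem:trunc-multi} at all --- it is a proof of Lemma~\ref{lem:lower-tail-Q}. You explicitly list ``the truncated multiplier process bound of Lemma~\ref{lem:trunc-multi}'' as one of your three ingredients, and you invoke it to control $r\,\omega_N(\mc H)$. That is circular: you are using the very statement you were asked to prove as a black box. Every other step you wrote (the small-ball lemma, Mendelson's lower bound, the decomposition $\wt{\mf x}_i = q_i X_i$, the moment check on $q_i$) belongs to the proof of Lemma~\ref{lem:lower-tail-Q}, and indeed matches the paper's own proof of that lemma essentially line for line.

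The paper does not supply a proof of Lemma~\ref{lem:trunc-multi}; it is quoted from \citep{goldstein2016structured}, with the remark that the argument ``is similar to Lemma~5.9 of \citep{goldstein2016structured} via an improved generic chaining technique'' and that details are omitted. A genuine proof would have to control the supremum of the multiplier process $\mf v \mapsto \frac{1}{N}\sum_{i=1}^N \varepsilon_i q_i \dotp{X_i}{\mf v}$ directly over $T$ by chaining: bounding increments at multiple scales using the sub-Gaussianity of $X_i$, and handling the heavy-tailed multipliers $q_i$ through the combination of the $2(1+\epsilon)$-moment bound and the deterministic truncation $|q_i|\leq \tau_2 N^{1/2(1+\epsilon)}$. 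None of that machinery appears in your write-up.
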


\begin{proof}[Proof of Lemma \ref{lem:lower-tail-Q}]
First of all, writing $\wt{q}_i:= \l(\frac{\|\mf x_i\|_2}{\sqrt{d}}\r)\wedge\tau$, we have 
$$\wt{\mf x}_i = \wt{q}_i\cdot \frac{\sqrt{d}\mf x_i}{\|\mf x_i\|_2},$$
then, by Lemma \ref{lem:bound-mu}, we have $\expect{\l|\mu/\sqrt{d}\r|^q}\leq \frac{\nu_q^q}{\kappa^{q/2}}$ for some $q>4$, and thus,
\[
\expect{|\wt{q}_i|^4}\leq
\expect{\l(\frac{\|\mf x_i\|_2}{\sqrt{d}}\r)^4} 
= \expect{\l(\frac{|\mu_i|}{\sqrt{d}}\|\mf BU_i\|_2\r)^4}\leq \lambda_{\max}(\mathbf{\Sigma})^{2}\expect{\l(\frac{|\mu|}{\sqrt d}\r)^4}
:= c(\nu_q,\kappa)
\]
Furthermore, it is obvious that $|\wt q_i|\leq N^{1/4}$ and by Lemma \ref{lem:normalization}, we have $\sqrt{d}\mf x_i/\|\mf x_i\|_2$ is a sub-Gaussian random vector.
Thus, 
by Lemma \ref{lem:trunc-multi}, we have
\[
Pr\l(\sup_{\mf v\in S_{2}(0,r)\cap B_\Psi(0,\rho)} 
\l|\frac1N\sum_{i=1}^N\varepsilon_i\dotp{\wt{\mf x}_i}{\theta-\eta\theta_*} \r|
\geq C(\nu_q,\kappa)\beta\frac{\omega(S_{2}(0,r)\cap B_\Psi(0,\rho)) + r}{\sqrt{N}}\r)
\leq ce^{-\beta},
\]
for any $N\geq \beta^2(\omega(S_{2}(0,r)\cap B_\Psi(0,\rho))+r)^2$. Integrating the tails gives
\[
\expect{\sup_{\mf v\in S_{2}(0,r)\cap B_\Psi(0,\rho)} 
\l|\frac1N\sum_{i=1}^N\varepsilon_i\dotp{\wt{\mf x}_i}{\theta-\eta\theta_*} \r|}\leq 
C(\nu_q,\kappa)\frac{\omega(S_{2}(0,r)\cap B_\Psi(0,\rho)) + r}{\sqrt{N}},
\]
Thus, Combining Lemma \ref{lem:small-ball-2} and Lemma \ref{lem:mendelson} with $N\geq\l.\frac{4}{Q^2}\expect{\mu^2}^2\lambda_{\max}(\mf{\Sigma})\r/d^2$
and $\mc H = S_{2}(0,1)\cap B_\Psi(0,\rho/r)$ give
\[
\inf_{\mf v\in S_{2}(0,1)\cap B_\Psi(0,\rho/r)}\l(\frac1N \sum_{i=1}^N\dotp{\wt{\mf x}_i}{\mf v}^2  \r)^{1/2}\geq
 \delta Q - C(\nu_q,\kappa)\frac{\omega(S_{2}(0,r)\cap B_\Psi(0,\rho)) + r}{r\sqrt{N}} - \frac{\delta t}{2\sqrt{N}},
\]
with probability at least  $1-e^{-t^2}$, which implies the claim.
\end{proof}

The following corollary on the bound of $r_{\mathcal{Q}}$ readily follows from the above lemma, and the fact that we take $\Lambda_Q = \delta^2Q^2/4$ in the definition of $r_{\mathcal{Q}}$.
\begin{corollary}\label{lem:bound-Q-2}
Consider $\Omega_{\mathcal{Q}}:=\l\{r>0:  N\geq\frac{4}{Q^2}\frac{\expect{\mu^2}^2\lambda_{\max}(\mf{\Sigma})}{d^2} + (\omega(S_{2}(0,r)\cap B_\Psi(0,\rho) )+ r)^2\r\}$, then, by taking $p_{\mathcal{Q}}=ce^{-t^2}$, we have
\[
r_{\mathcal{Q}}\leq \inf\l\{ r\in\Omega_{\mathcal{Q}}: \l(\frac{\delta Q}{2} - \frac{\delta t + C(\nu,\kappa)}{\sqrt{N}}\r)r\geq C(\nu_q,\kappa)\frac{\omega(S_{2}(0,r)\cap B_\Psi(0,\rho))}{\sqrt{N}} \r\},
\]
where $C(\nu_q,\kappa)$ is a constant depending only on $\nu_q$ and $\kappa$ in Assumption \ref{assumption:moment}.
\end{corollary}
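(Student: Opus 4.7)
The plan is to derive the corollary as an essentially algebraic consequence of Lemma \ref{lem:lower-tail-Q}, since all the heavy probabilistic work -- the truncated small-ball argument, the quadratic-form lower bound of \citep{mendelson2014learning}, and the sub-Gaussian multiplier bound of \citep{goldstein2016structured} -- has already been carried through there. The whole point of choosing $\Lambda_Q = \delta^2 Q^2 / 4$ is precisely so that $\sqrt{\Lambda_Q}\, r = (\delta Q/2)\, r$ matches half of the main term in the conclusion of Lemma \ref{lem:lower-tail-Q}, leaving the other half to absorb the stochastic and truncation errors.

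First I would invoke Lemma \ref{lem:lower-tail-Q} with the given $r$ and $\rho$, provided the sample-size hypothesis $r \in \Omega_{\mathcal{Q}}$, i.e.
\[
N \geq \frac{4}{Q^2}\frac{\mathbb{E}[\mu^2]^2 \lambda_{\max}(\mathbf{\Sigma})}{d^2} + \big(\omega(S_2(0,r)\cap B_\Psi(0,\rho)) + r\big)^2,
\]
is satisfied (the $\beta$ in Lemma \ref{lem:lower-tail-Q} may be taken to be a fixed constant, so it is absorbed into $C(\nu_q,\kappa)$). This gives, with probability at least $1 - ce^{-t^2}$,
\[
\inf_{\theta \in S_2(\eta\theta_*, r)\cap B_\Psi(\eta\theta_*,\rho)} \Big(\mathcal{P}_N \mathcal{Q}_{\theta - \eta\theta_*}\Big)^{1/2}
\geq \Big(\delta Q - \frac{\delta t}{\sqrt{N}}\Big) r - C(\nu_q,\kappa)\frac{\omega(S_2(0,r)\cap B_\Psi(0,\rho)) + r}{\sqrt{N}}.
\]

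Second, I would rearrange to see when the right-hand side is at least $(\delta Q/2)\, r = \sqrt{\Lambda_Q}\, r$. Moving the $(\delta Q/2)\, r$ term across and splitting the $r$-dependent part of the error gives exactly
\[
\Big(\frac{\delta Q}{2} - \frac{\delta t + C(\nu_q,\kappa)}{\sqrt{N}}\Big) r \;\geq\; C(\nu_q,\kappa)\,\frac{\omega(S_2(0,r)\cap B_\Psi(0,\rho))}{\sqrt{N}},
\]
which is the defining condition appearing under the infimum in the statement. Squaring the resulting lower bound $\mathcal{P}_N \mathcal{Q}_{\theta-\eta\theta_*} \geq \Lambda_Q r^2$ then matches the event in the definition of $r_{\mathcal{Q}}$, and the probability $1 - ce^{-t^2}$ matches the choice $p_{\mathcal{Q}} = ce^{-t^2}$.

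Finally, taking the infimum over all $r \in \Omega_{\mathcal{Q}}$ satisfying that condition yields the claimed upper bound on $r_{\mathcal{Q}}$. I do not anticipate a serious obstacle here: the corollary is essentially packaging Lemma \ref{lem:lower-tail-Q} into the critical-radius language of Theorem \ref{thm:master}. The only small subtlety is making sure the constant absorbed into $C(\nu_q,\kappa)$ is consistent between the two instances in the final inequality (the one multiplying $\delta t$ correction and the one multiplying the Gaussian mean width), which can be arranged by taking the maximum of the two constants produced by Lemma \ref{lem:lower-tail-Q}.
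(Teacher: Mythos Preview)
Your proposal is correct and follows essentially the same approach as the paper: the paper simply states that the corollary ``readily follows from the above lemma [Lemma \ref{lem:lower-tail-Q}], and the fact that we take $\Lambda_Q = \delta^2Q^2/4$ in the definition of $r_{\mathcal{Q}}$,'' and your write-up is precisely the algebraic unpacking of that remark. Your observation about absorbing the $\beta$ from Lemma \ref{lem:lower-tail-Q} and reconciling the two constants is the only detail needed beyond the direct rearrangement.
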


\subsubsection{Bounding the radiuses $r_{\mathcal{M}}$.}
\begin{lemma}\label{lem:bound-M-2}
Suppose $\Omega_{\mathcal{M}}:=\l\{r>0: N\geq (\omega(S_{2}(0,r)\cap B_\Psi(0,\rho)) + r)^2\r\} $, then, by taking $p_{\mathcal{M}} = e^{-\beta}$ and $\Lambda_M:= \delta^2Q^2/64$,
\[
r_{\mathcal{M}}\leq \inf\l\{ r\in\Omega_{\mathcal{M}}:  C(\nu,\kappa,\nu_q)\beta\frac{\omega(S_{2}(0,r)\cap B_\Psi(0,\rho)) + r}{\sqrt{N}}\leq \frac{\delta^2Q^2}{64}r^2 \r\},
\]
where $C(\nu,\kappa,\nu_q)$ is a constant depending only on $\nu,\kappa,\nu_q$ in Assumption \ref{assumption:moment}.
\end{lemma}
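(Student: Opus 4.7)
The plan is to establish, with probability $\geq 1-e^{-\beta}$, a bound of the form
\[
\sup_{\mf v\in B_{2}(0,r)\cap B_\Psi(0,\rho)}\l|\mc P_N\mc M_{\mf v}\r|\leq C(\nu,\kappa,\nu_q)\,\beta\,\frac{\omega(S_{2}(0,r)\cap B_\Psi(0,\rho))+r}{\sqrt{N}},
\]
valid whenever $r\in\Omega_{\mathcal M}$; setting this upper bound equal to $\Lambda_M r^2=(\delta^2Q^2/64)r^2$ and taking the infimum over admissible $r$ then yields the claimed bound on $r_{\mathcal M}$. First I would use Gin\'{e}--Zinn symmetrization to reduce the centered empirical process $\mc P_N\mc M_{\mf v}$ to a Rademacher multiplier process, and a standard Talagrand-type concentration (applicable since the summands are uniformly bounded by a polynomial in $\tau$) to upgrade expectation to a high-probability bound. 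The heavy lifting is then done by the truncated multiplier bound of \citep{goldstein2016structured}, restated here as Lemma \ref{lem:trunc-multi}.

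The key manipulation is the elliptical decomposition $\wt{\mf x}_i=\wt q_i Z_i$ with $\wt q_i:=(\|\mf x_i\|_2/\sqrt d)\wedge\tau\in[0,\tau]$ and $Z_i:=\sqrt d\,\mf x_i/\|\mf x_i\|_2$. By Lemma \ref{lem:normalization} the $Z_i$'s are i.i.d.\ sub-Gaussian with norm depending only on $\nu_q,\kappa$, and by Lemma \ref{lem:bound-mu} every moment of $\wt q_i$ of order at most $q=4(1+\epsilon_0)$ is controlled by constants in $\nu_q,\kappa$, where $\epsilon_0>0$ is fixed by Assumption \ref{assume:single-index}. Inserting this decomposition into the symmetrized integrand gives
\[
(\wt y_i-\wt q_i\dotp{Z_i}{\eta\theta_*})\,\wt q_i\dotp{Z_i}{\mf v}=q_i^{(1)}\dotp{Z_i}{\mf v}-q_i^{(2)}\dotp{Z_i}{\mf v},
\]
with scalar multipliers $q_i^{(1)}:=\wt y_i\wt q_i$ and $q_i^{(2)}:=\wt q_i^{\,2}\dotp{Z_i}{\eta\theta_*}$. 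Feeding each into Lemma \ref{lem:trunc-multi} over the index set $T:=S_2(0,r)\cap B_\Psi(0,\rho)$ with the sub-Gaussian vectors $Z_i$ produces the target rate $\omega(T)/\sqrt N$, provided the multiplier hypotheses are verified.

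For $q_i^{(1)}$ this is direct: Cauchy--Schwarz gives $\expect{|q_i^{(1)}|^{2+\epsilon_0}}\leq \expect{|\wt y_i|^{4+2\epsilon_0}}^{1/2}\expect{|\wt q_i|^{4+2\epsilon_0}}^{1/2}\leq C(\nu_q,\kappa)$, while $|q_i^{(1)}|\leq \tau^2=N^{1/(2+\epsilon_0)}$ matches the required bound $\tau_2 N^{1/(2(1+\epsilon))}$ in Lemma \ref{lem:trunc-multi} exactly when $\epsilon=\epsilon_0/2$. The main obstacle is the term $q_i^{(2)}$, whose deterministic bound would involve $|\dotp{Z_i}{\eta\theta_*}|\leq\sqrt d\,\|\eta\theta_*\|_2$ and thereby a dimension-dependent constant. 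The resolution is to first truncate $\dotp{Z_i}{\eta\theta_*}$ at level $C(\nu,\kappa)\sqrt{\log N}$; the excluded event has probability $\leq N^{-\alpha}$ for any fixed $\alpha$ by sub-Gaussianity (Lemma \ref{lem:normalization}) and contributes negligibly, while the truncated multiplier satisfies both hypotheses of Lemma \ref{lem:trunc-multi}. The moment bound is handled by H\"older with a small perturbation $\delta>0$:
\[
\expect{|q_i^{(2)}|^{2+\epsilon_0}}\leq \expect{|\wt q_i|^{(4+2\epsilon_0)(1+\delta)}}^{\frac{1}{1+\delta}}\,\expect{|\dotp{Z_i}{\eta\theta_*}|^{(2+\epsilon_0)(1+\delta)/\delta}}^{\frac{\delta}{1+\delta}},
\]
where $\delta$ is sized so that $(4+2\epsilon_0)(1+\delta)\leq q=4+4\epsilon_0$; this is where the assumption $q>4$ is consumed and explains why only $(4+\epsilon)$-moments are required.

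Assembling the two pieces, and recalling that Lemma \ref{lem:trunc-multi} is applicable precisely when $N\geq(\omega(T)+r)^2$, i.e.\ when $r\in\Omega_{\mathcal M}$, yields the displayed multiplier bound with probability at least $1-e^{-\beta}$; the definition of $r_{\mathcal M}$ together with $\Lambda_M=\delta^2Q^2/64$ then delivers the stated infimum. The principal technical obstacle is therefore the control of $q_i^{(2)}$, which requires the $\sqrt{\log N}$-truncation step and a H\"older budget carefully sized to consume the moment slack between $4$ and $q$, making the argument uniform in dimension via the sub-Gaussian constant from Lemma \ref{lem:normalization}.
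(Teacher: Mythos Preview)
Your approach is essentially the paper's: symmetrize and apply the truncated multiplier bound of Lemma~\ref{lem:trunc-multi} to the process $\varepsilon_i(\wt y_i-\dotp{\wt{\mf x}_i}{\eta\theta_*})\dotp{\wt{\mf x}_i}{\mf v}$, with the sub-Gaussian vector $Z_i=\sqrt d\,\mf x_i/\|\mf x_i\|_2$ from Lemma~\ref{lem:normalization} playing the role of $X_i$. The paper does not split: it treats the full scalar $(\wt y_i-\dotp{\wt{\mf x}_i}{\eta\theta_*})\bigl((\|\mf x_i\|_2/\sqrt d)\wedge\tau\bigr)$ as a single multiplier, checks the $2(1+\epsilon)$-moment with $\epsilon=(q-4)/8$ via H\"older and Lemma~\ref{lem:bound-mu}, and asserts the deterministic bound directly. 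Your decomposition into $q_i^{(1)}$ and $q_i^{(2)}$ is the same computation reorganized; the real difference is that you correctly flag that the piece $q_i^{(2)}=\wt q_i^{\,2}\dotp{Z_i}{\eta\theta_*}$ has no dimension-free deterministic bound, and propose a $\sqrt{\log N}$-level truncation of $\dotp{Z_i}{\eta\theta_*}$ to repair this. The paper's displayed inequality $|\wt q_i|\leq\bigl(1+\sqrt{\lambda_{\max}(\mathbf\Sigma)}\|\theta_*\|_2|\eta|\bigr)N^{2/(q+4)}$ in fact glosses over exactly this point, so your extra step is a genuine refinement rather than an unnecessary detour.

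Two small remarks. First, the Talagrand-type concentration you mention is not needed: Lemma~\ref{lem:trunc-multi} already delivers a tail bound, and the paper invokes it directly after symmetrization. Second, after your truncation the constant $\tau_2$ carries a $\sqrt{\log N}$ factor, so if you want $C(\nu,\kappa,\nu_q)$ genuinely independent of $N$ you should take $\epsilon$ strictly smaller than $\epsilon_0/2$, so that the slack $N^{1/(2(1+\epsilon))}/\tau^2=N^{\delta}$ absorbs the logarithm; this is harmless since you still have $2(1+\epsilon)<q/2$ and the moment hypothesis of Lemma~\ref{lem:trunc-multi} continues to hold.
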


\begin{proof}[Proof of Lemma \ref{lem:bound-M-2}]
First if all, by symmetrization inequality, it is enough to bound 
\begin{align*}
&\sup_{\theta\in B_\Psi(\eta\theta_*,\rho)\cap B_2(\eta\theta_*,r)}
\l| \frac1N\sum_{i=1}^N\varepsilon_i(\wt y_i - \dotp{\wt{\mf x}_i}{\eta\theta_*})\dotp{\wt{\mf x}_i}{\theta-\eta\theta_*} \r| \\
&= \sup_{\mf v\in B_\Psi(0,\rho)\cap B_2(0,r)}
\l| \frac1N\sum_{i=1}^N\varepsilon_i(\wt y_i - \dotp{\wt{\mf x}_i}{\eta\theta_*})\dotp{\wt{\mf x}_i}{\mf v} \r|\\
&= \sup_{\mf v\in B_\Psi(0,\rho)\cap B_2(0,r)}
\l| \frac1N\sum_{i=1}^N\varepsilon_i(\wt y_i - \dotp{\wt{\mf x}_i}{\eta\theta_*})\l(\frac{\|\mf x_i\|_2}{\sqrt{d}}\wedge\tau\r)
\dotp{\frac{\sqrt{d}\mf x_i}{\|\mf x_i\|_2}}{\mf v} \r|
\end{align*}
Let $\wt {q}_i:= (\wt y_i - \dotp{\wt{\mf x}_i}{\eta\theta_*})\l(\frac{\|\mf x_i\|_2}{\sqrt{d}}\wedge\tau\r)$.
By Lemma \ref{lem:normalization}, $\frac{\sqrt{d}\mf x_i}{\|\mf x_i\|_2}$ is a sub-Gaussian random vector. To apply Lemma \ref{lem:trunc-multi}, it is enough to check the aforementioned conditions hold for $\wt {q}_i$. Let $\epsilon = \frac{q-4}{8}$. Then, we look at 
$\expect{\l| \wt q_i \r|^{(q+4)/2}} = \expect{\l| \wt q_i \r|^{2(1+\epsilon)}}$,
\begin{align*}
\expect{\l| \wt q_i \r|^{2(1+\epsilon)}}^{1/2(1+\epsilon)} 
\leq& \expect{\l( \l| \wt y_i \r| \l( \frac{\|\mf x_i\|_2}{\sqrt{d}}\wedge\tau \r) \r)^{2(1+\epsilon)}}^{1/2(1+\epsilon)} 
+\expect{\l(\l| \dotp{\wt{\mf x}_i}{\eta\theta_*} \r| \l( \frac{\|\mf x_i\|_2}{\sqrt{d}}\wedge\tau \r)\r)^{2(1+\epsilon)}}^{1/2(1+\epsilon)}\\
\leq&  \expect{\l( \l| \wt y_i \r|  \frac{\|\mf x_i\|_2}{\sqrt{d}} \r)^{2(1+\epsilon)}}^{1/2(1+\epsilon)} 
+\expect{\l(\l| \dotp{\wt{\mf x}_i}{\eta\theta_*} \r| \frac{\|\mf x_i\|_2}{\sqrt{d}}\r)^{2(1+\epsilon)}}^{1/2(1+\epsilon)}
\end{align*}
For the first term on the right hand side, we have
\begin{align*}
\expect{\l( \l| \wt y_i \r|  \frac{\|\mf x_i\|_2}{\sqrt{d}} \r)^{2(1+\epsilon)}}^{1/2(1+\epsilon)}
=& \expect{\l( \l| \wt y_i \r| \cdot \|\mf B U\|_2 \frac{|\mu_i|}{\sqrt{d}} \r)^{2(1+\epsilon)}}^{1/2(1+\epsilon)}\\
\leq&\sqrt{\lambda_{\max}(\mf{\Sigma})}\cdot\expect{\l( \l| \wt y_i \r| \cdot \frac{|\mu_i|}{\sqrt{d}} \r)^{2(1+\epsilon)}}^{1/2(1+\epsilon)}\\
\leq&\sqrt{\lambda_{\max}(\mf{\Sigma})}\cdot \expect{ \l| \wt y_i \r|^{4(1+\epsilon)}}^{1/4(1+\epsilon)}
 \expect{ \l| \frac{\mu_i}{\sqrt{d}} \r|^{4(1+\epsilon)}}^{1/4(1+\epsilon)}\\
 \leq& \sqrt{\lambda_{\max}(\mf{\Sigma})}\cdot \nu_q \frac{\nu}{\kappa^{q/2}}=:c_1(\nu,\kappa,\nu_q)
\end{align*}

by Lemma \ref{lem:bound-mu}. For the second term, we have 
\begin{align*}
&\expect{\l(\l| \dotp{\wt{\mf x}_i}{\eta\theta_*} \r| \frac{\|\mf x_i\|_2}{\sqrt{d}}\r)^{2(1+\epsilon)}}^{1/2(1+\epsilon)}\\
\leq& \expect{\l(\l(\frac{|\mu|}{\sqrt d}\r)^2\|\mf B U\|_2\|\theta_*\|_2|\eta|\l|\dotp{\frac{\sqrt{d}\mf x_i}{\|\mf x_i\|_2}}{\eta\theta_*}\r|\r)^{2(1+\epsilon)}}^{1/2(1+\epsilon)}\\
\leq& \sqrt{\lambda_{\max}(\mf{\mathbf{\Sigma}})}\|\theta_*\|_2|\eta| \expect{\l(\l(\frac{|\mu|}{\sqrt d}\r)^2\l|\dotp{\frac{\sqrt{d}\mf x_i}{\|\mf x_i\|_2}}{\eta\theta_*}\r|\r)^{2(1+\epsilon)}}^{1/2(1+\epsilon)}\\
=&\sqrt{\lambda_{\max}(\mf{\mathbf{\Sigma}})}\|\theta_*\|_2|\eta| \expect{\l(\frac{|\mu|}{\sqrt d}\r)^{4(1+\epsilon)}\l|\dotp{\frac{\sqrt{d}\mf x_i}{\|\mf x_i\|_2}}{\eta\theta_*}\r|^{2(1+\epsilon)}}^{1/2(1+\epsilon)}\\
\leq&\sqrt{\lambda_{\max}(\mf{\mathbf{\Sigma}})}\|\theta_*\|_2|\eta| \expect{\l(\frac{|\mu|}{\sqrt d}\r)^q}^{\frac{2}{q}}
\expect{\l|\dotp{\frac{\sqrt{d}\mf x_i}{\|\mf x_i\|_2}}{\eta\theta_*}\r|^{\frac{4q(1+\epsilon)}{q-4}}}^{\frac{q-4}{4q(1+\epsilon)}}\\
=:& c_2(\nu,\kappa,\nu_q)
\end{align*}
by Lemma \ref{lem:bound-mu} and \ref{lem:normalization}. Furthermore, we have 
\[
\l| \wt{q}_i \r|\leq \l(1+\sqrt{\lambda_{\max}(\mf{\mathbf{\Sigma}})}\|\theta_*\|_2|\eta|\r)N^{2/(q+4)}.
\]
Now, applying Lemma \ref{lem:trunc-multi} gives
\begin{multline*}
\sup_{\theta\in B_\Psi(\eta\theta_*,\rho)\cap B_2(\eta\theta_*,r)}
\l| \frac1N\sum_{i=1}^N\varepsilon_i(\wt y_i - \dotp{\wt{\mf x}_i}{\eta\theta_*})\dotp{\wt{\mf x}_i}{\theta-\eta\theta_*} \r| \\
\leq C(\nu,\kappa,\nu_q)\beta\frac{ \omega( B_\Psi(0,\rho)\cap B_2(0,r))+ r}{\sqrt{N}},
\end{multline*}
with probability at least $1-ce^{-\beta}$, for some absolute constant $c>0$.
Thus, by taking $p_{\mathcal{M}} = e^{-\beta}$ and $\Lambda_M:= \delta^2Q^2/64$ we obtain the claim.
\end{proof}

\subsubsection{Bounding the radius $r_{\mc V}$}

\begin{lemma}\label{lem:bound-V-2}
The following bound holds
\[
r_{\mc V}\leq \frac{64C(\nu,\kappa,\nu_q)}{\delta^2Q^2\sqrt N},
\]
where $C(\nu,\kappa,\nu_q)$ is a constant depending only on $\nu,\kappa,\nu_q$ in Assumption \ref{assumption:moment}.
\end{lemma}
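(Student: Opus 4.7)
The plan is to reduce the lemma to a pointwise bound of the form $|\mc V_{\mf v}|\leq C(\nu,\kappa,\nu_q)\|\mf v\|_2/\sqrt N$ valid for every $\mf v\in\mb R^d$. Once this is in hand, taking the supremum over $\mf v\in B_2(0,r)\cap B_\Psi(0,\rho)$ gives $\sup|\mc V_{\mf v}|\leq C(\nu,\kappa,\nu_q)\,r/\sqrt N$, which is at most $\Lambda_{\mc V}r^2 = \delta^2Q^2 r^2/64$ as soon as $r\geq 64\,C(\nu,\kappa,\nu_q)/(\delta^2Q^2\sqrt N)$; by the definition of $r_{\mc V}$ this yields the claimed bound.

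The crucial first step, which enables the bound under only $4+\epsilon$ moments, is to show that the \emph{untruncated} analogue of $\mc V_{\mf v}$ vanishes. Writing $\mf x=\mu\mathbf{B}U$ and setting $\mf w_0:=\mathbf{B}^T\theta_*$, the normalization $\|\mathbf{\Sigma}^{1/2}\theta_*\|_2=1$ gives $\|\mf w_0\|_2=1$, and $y=f(\mu\dotp{U}{\mf w_0},\xi)$ depends on $U$ only through $\dotp{U}{\mf w_0}$. Decomposing $U=\dotp{U}{\mf w_0}\mf w_0+P_{\mf w_0^\perp}U$, the rotational invariance of $U$ (Lemma \ref{lem:elliptical}) makes the second component conditionally centered, so $\expect{y\mf x}=\mathbf{B}\mf w_0\,\expect{y\dotp{\mf x}{\theta_*}}=\eta\mathbf{\Sigma}\theta_*$, and therefore $\expect{(y-\dotp{\mf x}{\eta\theta_*})\dotp{\mf x}{\mf v}}=0$. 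Subtracting this vanishing quantity from $\mc V_{\mf v}$ identifies $\mc V_{\mf v}$ as a pure truncation bias, which I split as
\[
\mc V_{\mf v}=E_1-\eta E_2,\quad E_1:=\expect{\wt y\dotp{\wt{\mf x}}{\mf v}}-\expect{y\dotp{\mf x}{\mf v}},\quad E_2:=\expect{\dotp{\wt{\mf x}}{\theta_*}\dotp{\wt{\mf x}}{\mf v}}-\expect{\dotp{\mf x}{\theta_*}\dotp{\mf x}{\mf v}}.
\]
Each of $E_1,E_2$ is further split by adding and subtracting an intermediate expectation, yielding four bias pieces, each of them supported on either $\{|y|>\tau\}$ or $\{\|\mf x\|_2>\sqrt d\tau\}$ via the identities $\wt y-y=0$ on the first event's complement and $\wt{\mf x}-\mf x=0$ on the second event's complement (since $\wt{\mf x}=\alpha\mf x$ with $\alpha:=\min(1,\sqrt d\tau/\|\mf x\|_2)\in[0,1]$).

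To each of the four bias pieces I apply a three-way H\"older inequality with exponents $(q,4,q_3)$ satisfying $1/q+1/4+1/q_3=1$: the $L^q$ factor absorbs either $|y|$ or $|\dotp{\mf x}{\theta_*}|$ using $\expect{|y|^q}\leq\nu_q^q$ and a similar bound controlled by $\kappa$ for $\dotp{\mf x}{\theta_*}$; the $L^4$ factor controls $|\dotp{\mf x}{\mf v}|$ via Assumption \ref{assumption:moment} ($\expect{|\dotp{\mf x}{\mf v}|^4}\leq\nu\|\mf v\|_2^4$); and the remaining factor is the probability of the relevant tail event raised to $1/q_3$. The latter is bounded by Markov: $Pr(|y|>\tau)\leq\nu_q^q\tau^{-q}$ for the $y$-tail, and for the $\mf x$-tail, $\|\mf x\|_2\leq|\mu|\sqrt{\lambda_{\max}(\mathbf{\Sigma})}$ combined with Lemma \ref{lem:bound-mu} yields $Pr(\|\mf x\|_2>\sqrt d\tau)\leq\lambda_{\max}(\mathbf{\Sigma})^{q/2}(\nu_q/\sqrt\kappa)^q\tau^{-q}$. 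With the chosen $\tau=N^{2/(q+4)}$ the three exponents combine so that each bias piece is of order $\|\mf v\|_2\,N^{-1/2}$ already at the critical value $q=4$, and decays faster for larger $q$. The main obstacle is precisely this tight moment budget: a naive Cauchy--Schwarz would require $q\geq 8$, so the three-way H\"older split (with the tail-event indicator isolated as its own factor) and the sharp estimate $\expect{(|\mu|/\sqrt d)^q}\leq(\nu_q/\sqrt\kappa)^q$ from Lemma \ref{lem:bound-mu} are both essential to recover the $N^{-1/2}$ rate under the assumption $q=4(1+\epsilon)$.
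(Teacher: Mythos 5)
Your proposal is correct and follows essentially the same route as the paper: you use the identical decomposition of $\mc V_{\mf v}$ into truncation-bias pieces, the same vanishing argument for the untruncated term via the elliptical conditional-expectation property (Lemma \ref{lem:elliptical}), and the same Markov-on-tail-event mechanism, with the only cosmetic difference being that you apply a single three-way H\"older with exponents $(q,4,q_3)$ where the paper applies iterated Cauchy--Schwarz (i.e.\ the $(4,4,2)$ split) and puts all the extra moment budget into the Markov tail estimate.
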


\begin{proof}[Proof of Lemma \ref{lem:bound-V-2}]
First, we have, for any $\theta \in B_\Psi(\eta\theta_*,\rho)\cap B_2(\eta\theta_*,r)$, we have
\begin{multline*}
\mc{V}_{\theta-\eta\theta_*} = \expect{\l(\wt y_i - \dotp{\wt{\mf x}_i}{\eta\theta_*}\r)\dotp{\wt{\mf{x}}_i}{\theta-\eta\theta_*}}
= \expect{\dotp{\wt y_i \wt{\mf x}_i - y_i\mf x_i}{\theta - \eta\theta_*}} + \expect{\dotp{y_i\mf x_i}{\theta - \eta\theta_*}}  \\
- \expect{\dotp{\mf x_i}{\eta\theta_*}\dotp{\mf{x}_i}{\theta-\eta\theta_*}}
- \expect{\dotp{\wt{\mf x}_i}{\eta\theta_*}\dotp{\wt{\mf{x}}_i}{\theta-\eta\theta_*} - \dotp{\mf x_i}{\eta\theta_*}\dotp{\mf{x}_i}{\theta-\eta\theta_*}}.
\end{multline*}
Thus,
\begin{multline*}
|\mc{V}_{\theta-\eta\theta_*}|
\leq   \underbrace{\l|\expect{\dotp{\wt y_i \wt{\mf x}_i - y_i\mf x_i}{\theta - \eta\theta_*}}\r|}_{\text{(I)}}
+ \underbrace{\l|\expect{\dotp{y_i\mf x_i}{\theta - \eta\theta_*}}  
- \expect{\dotp{\mf x_i}{\eta\theta_*}\dotp{\mf{x}_i}{\theta-\eta\theta_*}}\r|}_{\text{(II)}}\\
+ \underbrace{\l|\expect{\dotp{\wt{\mf x}_i}{\eta\theta_*}\dotp{\wt{\mf{x}}_i}{\theta-\eta\theta_*} - \dotp{\mf x_i}{\eta\theta_*}\dotp{\mf{x}_i}{\theta-\eta\theta_*}}\r|}_{\text{(III)}}
\end{multline*}
For the first term, we have by definition of $\wt y_i$ and $\wt{\mf x}_i$ in \eqref{eq:trunc2},
\begin{align*}
\text{(I)}
\leq& \expect{\l|\dotp{\wt y_i \wt{\mf x}_i - y_i\mf x_i}{\theta - \eta\theta_*}\r|} \\
=&  \expect{\l| \sign(y_i)(|y_i|\wedge\tau)\l( \frac{\|\mf x_i\|_2}{\sqrt d}\wedge\tau \r) - y\frac{\|\mf x\|_2}{\sqrt d} \r| \cdot
\l| \dotp{\frac{\sqrt d \mf x_i}{\|\mf x_i\|_2}}{\theta-\eta\theta_*} \r|}\\
\leq& \expect{
\l| y_i\frac{\|\mf x_i\|_2}{\sqrt d} \r|\cdot 1_{\l\{\|\mf x_i\|_2/\sqrt d\geq\tau\r\} \cup\l\{ |y_i|\geq\tau \r\} }
\cdot \l| \dotp{\frac{\sqrt d \mf x_i}{\|\mf x_i\|_2}}{\theta-\eta\theta_*} \r|}\\
=& \expect{\l| y_i\dotp{\mf x_i}{\theta-\eta\theta_*} \r| \cdot 1_{\l\{\|\mf x_i\|_2/\sqrt d\geq\tau\r\} \cup\l\{ |y_i|\geq\tau \r\} }}\\
\leq &\expect{\l| y_i\dotp{\mf x_i}{\theta-\eta\theta_*} \r|^2}^{1/2}Pr\l( \l\{\|\mf x_i\|_2/\sqrt d\geq\tau\r\} \cup\l\{ |y_i|\geq\tau \r\}  \r)^{1/2},
\end{align*}
where the second inequality follows from the fact that on the set $\l\{\|\mf x_i\|_2/\sqrt d<\tau\r\} \cap\l\{ |y_i|<\tau \r\}$ the expression is 0, and the last inequality follows from Holder's inequality. By Assumption \ref{assumption:moment}, let $\epsilon = (q-4)/4$,
\begin{align*}
\expect{\l| y_i\dotp{\mf x_i}{\theta-\eta\theta_*} \r|^2}^{1/2} 
\leq& \expect{|y_i|^4}^{1/4}\expect{|\dotp{\mf x_i}{\theta-\eta\theta_*} |^4}^{1/4}\\
=& \expect{\l|y_i\r|^4}^{1/4} \expect{ \l|\frac{\mu}{\sqrt{d}}\r|^4 \l|\dotp{\frac{\sqrt{d}\mf x_i}{\|\mf x_i\|}}{\theta-\eta\theta_*} \r|^4}^{1/4}\\
\leq& \expect{\l|y_i\r|^4}^{1/4} 
\expect{\l|\frac{\mu}{\sqrt{d}}\r|^{4(1+\epsilon)}}^{1/4(1+\epsilon)}
\expect{\l|\dotp{\frac{\sqrt{d}\mf x_i}{\|\mf x_i\|}}{\theta-\eta\theta_*} \r|^{\frac{4(1+\epsilon)}{\epsilon}}}^{\frac{\epsilon}{4(1+\epsilon)}}\\
\leq& \expect{\l|y_i\r|^4}^{1/4} 
\expect{\l|\frac{\mu}{\sqrt{d}}\r|^{4(1+\epsilon)}}^{1/4(1+\epsilon)}\l\| \frac{\sqrt{d}\mf x_i}{\|\mf x_i\|} \r\|_{\psi_2}\sqrt{\frac{4(1+\epsilon)}{\epsilon}}
\|\theta-\eta\theta_*\|_2,
\end{align*}
where the second from the last inequality follows from Holder's inequality and the last inequality follows from Lemma \ref{lem:normalization}. Furthermore, we have
\begin{align*}
Pr\l( \l\{\|\mf x_i\|_2/\sqrt d\geq\tau\r\} \cup\l\{ |y_i|\geq\tau \r\}  \r)^{1/2}
\leq& \l( Pr\l( \|\mf x_i\|_2/\sqrt d\geq\tau \r)  +  Pr(|y_i|\geq\tau)   \r)^{1/2}\\
\leq& \l( \frac{\expect{\|\mf x_i\|_2^q}}{d^{q/2}\tau^q}  +  \frac{\expect{|y_i|^q}}{\tau^q}   \r)^{1/2}\\
\leq& \lambda_{\max}(\mathbf{\Sigma})^{q/4}\expect{\l|\frac{\mu_i}{\sqrt{d}}\r|^{q}}^{1/2}\frac{1}{\sqrt N}+ \frac{\expect{|y_i|^q}^{1/2}}{\sqrt N}  
\end{align*}
Thus, it follows
\begin{equation}\label{eq:inter-11}
\text{(I)} \leq \frac{C_1(\nu,\kappa,\nu_q)}{\sqrt N}\|\theta - \eta\theta_*\|_2.
\end{equation}

Now, we consider the term (II). Let $\mf x_0 = \mf{\Sigma}^{-1/2}\mf x_i$, then, we have
\begin{align*}
\expect{y_i\dotp{\mf x_i}{\theta - \eta\theta_*}}
=& \expect{f(\dotp{\mf x_i}{\theta_*}, \xi_i)\dotp{\mf x_i}{\theta - \eta\theta_*}}\\
=& \expect{f\l(\dotp{\mf{\Sigma}^{1/2}\mf x_0}{\theta_*}, \xi_i\r)\dotp{\mf{\Sigma}^{1/2}\mf x_0}{\theta - \eta\theta_*}}\\
=& \expect{f\l(\dotp{\mf x_0}{\mf{\Sigma}^{1/2}\theta_*}, \xi_i\r)\dotp{\mf x_0}{\mf{\Sigma}^{1/2}(\theta - \eta\theta_*)}}\\
=& \expect{\expect{  \l. f\l(\dotp{\mf x_0}{\mf{\Sigma}^{1/2}\theta_*}, \xi_i\r)\dotp{\mf x_0}{\mf{\Sigma}^{1/2}(\theta - \eta\theta_*)} \r|~\dotp{\mf x_0}{\mf{\Sigma}^{1/2}\theta_*}, \xi_i}}
\end{align*}
Note that $\mf x_0 \sim\mathcal{E}_d(0,\mf I_{d\times d},F_\mu)$ and $\l\| \mf{\Sigma}^{1/2}\theta_* \r\|_2=1$, by Lemma \ref{lem:elliptical}, we have 
\begin{align*}
\expect{y_i\dotp{\mf x_i}{\theta - \eta\theta_*}} =& 
\expect{f\l(\dotp{\mf x_0}{\mf{\Sigma}^{1/2}\theta_*}, \xi_i\r)\dotp{\mf x_0}{\mf{\Sigma}^{1/2}\theta_*}\dotp{\mf{\Sigma}^{1/2}\theta_*}
{\mf{\Sigma}^{1/2}(\theta-\eta\theta_*)}}\\
=&\expect{f\l(\dotp{\mf x_i}{\theta_*}, \xi_i\r)\dotp{\mf x_i}{\theta_*}\dotp{\mf{\Sigma}\theta_*}
{\theta-\eta\theta_*}}
= \eta \dotp{\mf{\Sigma}\theta_*}{\theta-\eta\theta_*},
\end{align*}
where the scaling constant $\eta$ is defined in \eqref{eq:scaling-const-2}. On the other hand, it is obvious that
\[
\expect{\dotp{\mf x_i}{\eta\theta_*}\dotp{\mf{x}_i}{\theta-\eta\theta_*}} = \eta \dotp{\mf{\Sigma}\theta_*}{\theta-\eta\theta_*},
\]
which implies 
\begin{equation}\label{eq:inter-12}
\text{(II)} = 0.
\end{equation}

Finally, we have
\begin{align*}
\text{(III)} =& 
\l|\expect{\dotp{\wt{\mf x}_i}{\eta\theta_*}\dotp{\wt{\mf{x}}_i}{\theta-\eta\theta_*} - \dotp{\mf x_i}{\eta\theta_*}\dotp{\mf{x}_i}{\theta-\eta\theta_*}}\r|\\
\leq& \|\eta\theta_*\|_2\|\theta- \eta\theta_*\|_2\l\| \expect{\mf x_i\mf x_i^T - \wt{\mf x}_i\wt{\mf x}_i^T} \r\|_*\\
\leq& \|\eta\theta_*\|_2\|\theta- \eta\theta_*\|_2\l\| \expect{\mf x_i\mf x_i^T \cdot 1_{\l\{ \|\mf x_i\|_2/\sqrt{d}\geq \tau \r\}}} \r\|_*\\
=& \|\eta\theta_*\|_2\|\theta- \eta\theta_*\|_2\cdot \sup_{\|\mf v\|_2 = 1}\expect{\dotp{\mf x_i}{\mf v}^2 \cdot 1_{\l\{ \|\mf x_i\|_2/\sqrt{d}\geq \tau \r\}}} \\
\leq&  \|\eta\theta_*\|_2\|\theta- \eta\theta_*\|_2\cdot \sup_{\|\mf v\|_2 = 1}\expect{\dotp{\mf x_i}{\mf v}^4}^{1/2} Pr\l( \|\mf x_i\|_2/\sqrt{d}\geq \tau \r)^{1/2},
\end{align*}
where the first inequality follows from Cauchy-Schwarz, the second inequality follows from the fact that the expression is equal to 0 when 
$\|\mf x_i\|_2/\sqrt{d}< \tau$ and the final inequality follows from Holder's inequality. By Assumption \ref{assumption:moment}, we have
$\sup_{\|\mf v\|_2 = 1}\expect{\dotp{\mf x_i}{\mf v}^4}^{1/2}\leq \nu^{1/2}$. Also, $\|\eta\theta_*\|_2 = |\eta|\|\mf{\Sigma}^{-1/2}\mf{\Sigma}^{1/2}\theta_*\|_2
\leq \frac{|\eta|}{\sqrt{\kappa}}$.
Furthermore, by Markov inequality,
\[
Pr\l( \|\mf x_i\|_2/\sqrt{d}\geq \tau \r)^{1/2}\leq \l( \frac{\expect{\|\mf x_i\|_2^4}}{d^2\tau^4} \r)^{1/2}
= \l( \frac{\expect{\mu_i^4\|\mf B U_i\|_2^4}}{d^2\tau^4} \r)^{1/2}
\leq \frac{\lambda_{\max}(\mf{\Sigma})}{\sqrt{N}}\cdot \expect{\l| \frac{\mu^4}{d^2} \r|}^{1/2}.
\]
By Lemma \ref{lem:bound-mu} we have $\expect{\l| \frac{\mu^4}{d^2} \r|}^{1/2}$ is bounded by some constant, thus, 
\[
\text{(III)} \leq \frac{C_2(\nu,\kappa,\nu_q)}{\sqrt{N}}\|\theta- \eta\theta_*\|_2
\]
by some constant $C>0$. Overall, combining the above bound with \eqref{eq:inter-11} and \eqref{eq:inter-12} gives
\[
|\mc{V}_{\theta-\eta\theta_*}| \leq \frac{C(\nu,\kappa,\nu_q)}{\sqrt{N}}\|\theta- \eta\theta_*\|_2.
\]
By definition of $r_{\mc V}$ and the setting that $\Lambda_{\mc V}=\frac{\delta^2Q^2}{64}$, we let 
$$\frac{C(\nu,\kappa,\nu_q)r}{\sqrt{N}} = \frac{\delta^2Q^2}{64}r^2
\Rightarrow r =  \frac{64C(\nu,\kappa,\nu_q)}{\delta^2Q^2\sqrt N},$$
thus, the radius  $r_{\mc V}$ must be bounded above by this value.
\end{proof}

\subsection{Applying bounds to low-rank matrix recovery}
In this Section, we show that by combining Lemma \ref{lem:bound-radiuses-2} with Theorem \ref{thm:master}, we can obtain tight sample and error rates in the low-rank recovery problems.

We analyze the scenario where $\theta_*\in\mathbb{R}^{m\times n}$, $\Psi(\cdot) = \|\cdot\|_*$, the nuclear norm of the matrix, and it is close to a rank $s$ matrix $\theta_0$. We use $\|\cdot\|_2$ to denote the Frobenius norm of a matrix.
\begin{lemma}\label{lem:sub-diff-matrix}
Suppose $\|\eta\theta_*-\theta_0\|_*\leq\rho/16$ where $\theta_0$ is a rank $s$ matrix. Then, under the condition $\rho\geq 16r(\rho)\sqrt{s}$, 
$\Delta(\eta\theta_*,\rho)\geq\frac{3}{4}\rho$.
\end{lemma}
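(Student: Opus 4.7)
The plan is to mirror the proof of Lemma \ref{lem:sparse-eq} with the sparse decomposition replaced by the standard matrix decomposition into the tangent space of the rank-$s$ manifold at $\theta_0$ and its orthogonal complement, and with the sub-gradient of the $\ell_1$-norm replaced by the sub-gradient of the nuclear norm at $\theta_0$. Concretely, let $\theta_0 = U\Sigma V^T$ be a thin SVD with $U\in\mathbb{R}^{m\times s}$ and $V\in\mathbb{R}^{n\times s}$ having orthonormal columns, and define the tangent space
\[
T := \{UA^T + BV^T : A\in\mathbb{R}^{n\times s},\, B\in\mathbb{R}^{m\times s}\},
\]
with orthogonal projection $\mathcal{P}_T\mathbf{v} = UU^T\mathbf{v} + \mathbf{v} VV^T - UU^T\mathbf{v} VV^T$ and $\mathcal{P}_{T^\perp}\mathbf{v} = (I-UU^T)\mathbf{v}(I-VV^T)$. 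The key structural fact is that $\operatorname{rank}(\mathcal{P}_T\mathbf{v})\leq 2s$ for every $\mathbf{v}$, while any $W\in T^\perp$ automatically satisfies $U^TW=0$ and $WV=0$.

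Fix any $\mathbf{v} = \theta-\eta\theta_*$ with $\theta \in B_2(\eta\theta_*,r(\rho))\cap S_\Psi(\eta\theta_*,\rho)$. Using the SVD of $\mathcal{P}_{T^\perp}\mathbf{v}$, I would select $W\in T^\perp$ with $\|W\|_{\mathrm{op}}\leq 1$ so that $\langle W,\mathcal{P}_{T^\perp}\mathbf{v}\rangle = \|\mathcal{P}_{T^\perp}\mathbf{v}\|_*$, and set $\mathbf{z}^* := UV^T + W$. By the standard characterization of the sub-differential of the nuclear norm, $\mathbf{z}^*\in\partial\|\theta_0\|_*$; since $\|\eta\theta_*-\theta_0\|_*\leq\rho/16$ places $\theta_0\in B_\Psi(\eta\theta_*,\rho/16)$, we have $\mathbf{z}^*\in\Gamma_\Psi(\eta\theta_*,\rho)$ as defined in \eqref{eq:sub-diff}.

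Now I would expand $\langle\mathbf{z}^*,\mathbf{v}\rangle$. Since $UV^T\in T$ and $W\in T^\perp$, the cross terms vanish and
\[
\langle \mathbf{z}^*,\mathbf{v}\rangle = \langle UV^T,\mathcal{P}_T\mathbf{v}\rangle + \|\mathcal{P}_{T^\perp}\mathbf{v}\|_*.
\]
Three simple bounds close the argument: (i) Cauchy--Schwarz in Frobenius norm together with $\|UV^T\|_2 = \sqrt{s}$ gives
$\langle UV^T,\mathcal{P}_T\mathbf{v}\rangle \geq -\sqrt{s}\,\|\mathcal{P}_T\mathbf{v}\|_2 \geq -\sqrt{s}\,r(\rho)$; (ii) the rank bound $\operatorname{rank}(\mathcal{P}_T\mathbf{v})\leq 2s$ yields $\|\mathcal{P}_T\mathbf{v}\|_*\leq\sqrt{2s}\,\|\mathcal{P}_T\mathbf{v}\|_2\leq\sqrt{2s}\,r(\rho)$; (iii) triangle inequality for $\|\cdot\|_*$ and $\|\mathbf{v}\|_*=\rho$ give $\|\mathcal{P}_{T^\perp}\mathbf{v}\|_*\geq \rho - \|\mathcal{P}_T\mathbf{v}\|_*$. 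Combining them produces
\[
\langle \mathbf{z}^*,\mathbf{v}\rangle \geq \rho - (1+\sqrt{2})\sqrt{s}\,r(\rho).
\]
Under the hypothesis $\rho\geq 16r(\rho)\sqrt{s}$, the right-hand side is at least $\rho(1-(1+\sqrt{2})/16)\geq 3\rho/4$, which is exactly $\Delta(\eta\theta_*,\rho)\geq 3\rho/4$.

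There is no real obstacle here beyond bookkeeping: the only delicate step is verifying $\mathbf{z}^*\in\partial\|\theta_0\|_*$ (i.e.\ that the chosen $W$ belongs to $T^\perp$ and has operator norm at most one), which follows from the explicit construction of $W$ from the SVD of $\mathcal{P}_{T^\perp}\mathbf{v}$. The constant $16$ in the hypothesis is comfortably large enough to absorb the factor $1+\sqrt{2}$ and meet the threshold $3/4$.
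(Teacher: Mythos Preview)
Your proof is correct and follows essentially the same approach as the paper: pick a subgradient of $\|\cdot\|_*$ at the nearby rank-$s$ matrix $\theta_0$ (which lies in $\Gamma_\Psi(\eta\theta_*,\rho)$ by the hypothesis $\|\eta\theta_*-\theta_0\|_*\leq\rho/16$), then use rank bounds to show $\langle\mathbf z^*,\mathbf v\rangle\geq\rho-C\sqrt{s}\,r(\rho)$. The only cosmetic difference is that the paper invokes Watson's characterization and a four-block decomposition $P_I\mathbf W P_J,\;P_{I^\perp}\mathbf W P_J,\;P_I\mathbf W P_{J^\perp},\;P_{I^\perp}\mathbf W P_{J^\perp}$ (yielding constant $4$), whereas your tangent-space $T/T^\perp$ split with $\mathbf z^*=UV^T+W$ gives the slightly sharper constant $1+\sqrt{2}$; both are comfortably handled by the hypothesis $\rho\geq16r(\rho)\sqrt{s}$.
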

Similar types of bounds characterizing the set of sub-differentials  also appears in Lemma 4.4 of \citep{lecue2016regularization} and the proof is rather standard. For completeness, we provide a proof which uses the following classical lemma stating that the nuclear norm $\|\cdot\|_*$, similar to the $\|\cdot\|_1$, is also decomposable. 
\begin{lemma}[\citep{watson1992characterization}]\label{lem:decompose-matrix}
Let $\mf V\in\mb R^{m\times n}$ such that $\mf V = P_I \mf V P_J$ for orthogonal projections $P_I$ and $P_J$ on to subspaces $I\subseteq\mb R^m$ and $J\subseteq R^n$, respectively. Then, for every $\mf W\in \mb R^{m\times n}$, there exists a matrix $\mf Z \in R^{m\times n}$ such that 
$\|\mf Z\| = 1$ and 
\[
\dotp{\mf Z}{\mf V} = \|\mf V\|_*,~\dotp{\mf Z}{P_{I^{\perp}} \mf W P_{J^{\perp}}} = \|P_{I^{\perp}} \mf W P_{J^{\perp}}\|_*,~
\dotp{\mf Z}{P_{I} \mf W P_{J^{\perp}}} = 0,~\dotp{\mf Z}{P_{I^{\perp}} \mf W P_{J}} = 0.
\]
\end{lemma}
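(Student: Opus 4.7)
The plan is to construct $\mf Z$ explicitly as $\mf Z_1+\mf Z_2$, where $\mf Z_1$ is the standard ``sign'' matrix of $\mf V$ built from its SVD and $\mf Z_2$ is the analogous sign matrix of the residual piece $P_{I^\perp}\mf W P_{J^\perp}$. The two pieces live in Frobenius-orthogonal blocks, which is precisely what will make the four listed inner-product identities mutually compatible and keep the operator norm equal to one.

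Concretely, write the compact SVD $\mf V=\sum_{k=1}^r\sigma_k u_k v_k^T$; the hypothesis $\mf V=P_I\mf V P_J$ forces $u_k\in I$ and $v_k\in J$. Set $\mf Z_1:=\sum_{k=1}^r u_k v_k^T$, so $\mf Z_1=P_I\mf Z_1 P_J$. Next, let $\mf W':=P_{I^\perp}\mf W P_{J^\perp}$ have compact SVD $\sum_j\sigma'_j u'_j v_j'^T$, which by construction satisfies $u'_j\in I^\perp$ and $v'_j\in J^\perp$; set $\mf Z_2:=\sum_j u'_j v_j'^T$, so $\mf Z_2=P_{I^\perp}\mf Z_2 P_{J^\perp}$. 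Define $\mf Z:=\mf Z_1+\mf Z_2$.

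The four inner-product conditions follow from the block-orthogonal decomposition of $\mb R^{m\times n}$ into the four Frobenius-orthogonal subspaces $P_A(\cdot)P_B$ with $A\in\{I,I^\perp\}$ and $B\in\{J,J^\perp\}$. By construction $\mf Z_1$ lies in the $(I,J)$ block and $\mf Z_2$ in the $(I^\perp,J^\perp)$ block, so $\mf Z$ is Frobenius-orthogonal to every matrix sitting in the mixed blocks $(I,J^\perp)$ and $(I^\perp,J)$; this gives conditions 3 and 4 immediately. Condition 1 follows because $\mf V$ lives in the $(I,J)$ block, so $\dotp{\mf Z_2}{\mf V}=0$ and $\dotp{\mf Z_1}{\mf V}=\sum_k\sigma_k=\|\mf V\|_*$ by orthonormality of the $u_k$ and $v_k$. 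Condition 2 is symmetric: $\dotp{\mf Z_1}{\mf W'}=0$ while $\dotp{\mf Z_2}{\mf W'}=\sum_j\sigma'_j=\|\mf W'\|_*$.

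The main obstacle is verifying the operator-norm bound $\|\mf Z\|=1$. I would expand
\[
\mf Z\mf Z^T=\mf Z_1\mf Z_1^T+\mf Z_1\mf Z_2^T+\mf Z_2\mf Z_1^T+\mf Z_2\mf Z_2^T,
\]
and observe that the two cross terms vanish: $\mf Z_1\mf Z_2^T=\sum_{k,j}u_k(v_k^T v'_j)u_j'^T=0$ because $v_k\in J$ is orthogonal to $v'_j\in J^\perp$, and symmetrically $\mf Z_2\mf Z_1^T=0$. The surviving pieces $\mf Z_1\mf Z_1^T=\sum_k u_k u_k^T$ and $\mf Z_2\mf Z_2^T=\sum_j u'_j u_j'^T$ are orthogonal projections onto subspaces of $I$ and $I^\perp$ respectively, and since those target subspaces are themselves orthogonal, the sum is again an orthogonal projection, of operator norm exactly one (the degenerate case $\mf V=\mf W'=0$ being trivial). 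Hence $\|\mf Z\|=\sqrt{\|\mf Z\mf Z^T\|}=1$, which supplies the last ingredient.
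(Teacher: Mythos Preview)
Your proof is correct. The paper does not actually prove this lemma; it merely cites it from \citep{watson1992characterization} as a classical decomposability result and then invokes it to prove Lemma~\ref{lem:sub-diff-matrix}. Your explicit SVD-based construction of $\mf Z=\mf Z_1+\mf Z_2$ is the standard argument for this fact and goes through cleanly: the key observations---that the singular vectors of $\mf V$ lie in $I$ and $J$ respectively (since $\mf V=P_I\mf VP_J$ forces the column and row spaces into $I$ and $J$), that the cross terms $\mf Z_1\mf Z_2^T$ vanish by orthogonality of $J$ and $J^\perp$, and that the sum of orthogonal projections onto mutually orthogonal subspaces is again an orthogonal projection---are all sound. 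The degenerate case $\mf V=\mf W'=0$ is indeed trivial, since any rank-one $\mf Z$ with unit operator norm satisfies all four (now vacuous) inner-product conditions.
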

\begin{proof}[Proof of Lemma \ref{lem:sub-diff-matrix}]
Recall that 
\[
\Delta(\eta\theta_*,\rho) := \inf_{\theta\in B_{2}(\eta\theta_*,r)\cap S_\Psi(\eta\theta_*,\rho)}
~\sup_{\mf{z}\in\Gamma_\Psi(\eta\theta_*,\rho)}\dotp{\mf z}{\theta-\eta\theta_*}
\]
Suppose $I, J$ are subspaces of $\mb R^m$ and $\mb R^n$ such that $\theta_0 = P_I \theta_0 P_J$.
Since $\|\eta\theta_*-\theta_0\|_*\leq\rho/16$, the set of  subdifferentials of $\theta_0$ are contained in $\Gamma_\Psi(\eta\theta_*,\rho)$. By Lemma \ref{lem:decompose-matrix}, there exists $\mf z\in \Gamma_\Psi(\eta\theta_*,\rho)$ such that for any $\mf W \in B_{2}(0,r)\cap S_\Psi(0,\rho)$, 
\[
\dotp{\mf z}{\theta_0} = \|\theta_0\|_*,~\dotp{\mf z}{P_{I^{\perp}} \mf W P_{J^{\perp}}} = \|P_{I^{\perp}} \mf W P_{J^{\perp}}\|_*,~
\dotp{\mf z}{P_{I} \mf W P_{J^{\perp}}} = 0,~\dotp{\mf z}{P_{I^{\perp}} \mf W P_{J}} = 0.
\]
In particular, this implies,
\begin{align*}
\dotp{\mf z}{\mf W} =&  \dotp{\mf z}{P_{I^{\perp}} \mf W P_{J^{\perp}}} + \dotp{\mf z}{P_{I^{\perp}} \mf W P_{J}} + \dotp{\mf z}{P_{I} \mf W P_{J^{\perp}}}
+ \dotp{\mf z}{P_{I} \mf W P_{J}}\\
\geq& \|P_{I^{\perp}} \mf W P_{J^{\perp}}\|_* - \|P_{I^{\perp}} \mf W P_{J}\|_* -  \|P_{I} \mf W P_{J^{\perp}}\|_* -  \|P_{I} \mf W P_{J}\|_*\\
\geq& \|\mf W\|_* - \|P_{I^{\perp}} \mf W P_{J}\|_* -  \|P_{I} \mf W P_{J^{\perp}}\|_* -  2\|P_{I} \mf W P_{J}\|_*.
\end{align*}
Let $\l\{\mathbf{\Sigma}_i(\mf W)\r\}_{i=1}^{\min\{m,n\}}$ we the sequence inf singular values of $\mf W$ in decreasing order.
$$\|P_{I^{\perp}} \mf W P_{J}\|_*\leq \sum_{i=1}^s\mathbf{\Sigma}_i(\mf W)\leq \sqrt{s}\|\mf W\|_2\leq \sqrt{s}r(\rho).$$
Same bounds hold for $\|P_{I} \mf W P_{J^{\perp}}\|_*$ and  $\|P_{I} \mf W P_{J}\|_*$. Thus, we get for any $\mf W \in B_{2}(0,r)\cap S_\Psi(0,\rho)$, there exists $\mf z\in \Gamma_\Psi(\eta\theta_*,\rho)$ such that
\[
\dotp{\mf z}{\mf W} \geq \rho - 4r(\rho)\sqrt{s},
\]
which is greater than $\frac{3}{4}\rho$ when $\rho\geq 16r(\rho)\sqrt{s}$.
\end{proof}

\begin{proof}[Proof of Theorem \ref{thm:low-rank}]
First, note that the Gaussian mean width can be bounded as follows
\begin{multline*}
\omega( B_\Psi(0,\rho)\cap B_2(0,r))\leq \min\l\{ \expect{\sup_{\mf v\in B_\Psi(0,\rho)} \dotp{\mf g}{\mf v}  },
~ \expect{\sup_{\mf v\in B_2(0,r)} \dotp{\mf g}{\mf v}  } \r\}\\
\leq C_0 \min\l\{ \rho \sqrt{m+n},~r\sqrt{mn} \r\},
\end{multline*}
for some absolute constant $C_0>0$.
By Corollary \ref{lem:bound-Q-2}, we have 
$$\Omega_{\mathcal{Q}}=\l\{r>0, N\geq\frac{4}{Q^2}\frac{\expect{\mu^2}^2\lambda_{\max}(\mf{\Sigma})}{d^2} + C_0^2 \min\l\{ \rho \sqrt{m+n},~r\sqrt{mn} \r\}^2\r\},$$
and then
\[
r_{\mathcal{Q}}\leq \inf\l\{ r\in\Omega_{\mathcal{Q}}: \l(\frac{\delta Q}{2} - \frac{\delta t + C(\nu,\kappa,\nu_q)}{\sqrt{N}}\r)r\geq \frac{C(\nu,\kappa,\nu_q)\min\l\{ \rho \sqrt{m+n},~r\sqrt{mn} \r\}}{\sqrt{N}} \r\}.
\]
Also, by Lemma \ref{lem:bound-M-2}, $\Omega_{\mathcal{M}} = \l\{r>0, N\geq C_0^2 \min\l\{ \rho \sqrt{m+n},~r\sqrt{mn} \r\}^2\r\}$,
\[
 r_{\mathcal{M}}\leq \inf\l\{ r\in\Omega_{\mathcal{M}}:  C(\nu,\kappa,\nu_q)\beta\frac{\min\l\{ \rho \sqrt{m+n},~r\sqrt{mn} \r\} + r}{\sqrt{N}}\leq \frac{\delta^2Q^2}{64}r^2 \r\}
\]
Furthermore, by Lemma \ref{lem:bound-V-2},we have
\[
r_{\mc V}\leq \frac{64C(\nu,\kappa,\nu_q)}{\delta^2Q^2\sqrt N}.
\]
Since the final radius bound $r(\rho)= \max\{ r_{\mathcal{Q}},r_{\mathcal{M}}, r_{\mc V} \}$, and
by Lemma \ref{lem:sparse-eq}, $\rho\geq 16r(\rho)\sqrt{s}$ implies the sparsity condition $\Delta(\eta\theta_*,\rho)\geq 3\rho/4$. Thus, the aforementioned sparsity condition holds for any 
\[
\rho\geq C_1(\nu,\kappa,\nu_q)\beta\frac{s\sqrt{m+n}}{\sqrt{N}}.
\]
In particular, this implies $r(\rho)\leq C_1(\nu,\kappa,\nu_q)\frac{\beta\sqrt{s(m+n)}}{\sqrt{N}}$, and by Theorem \ref{thm:master}, we need to choose $\lambda = c_1(\nu,\kappa,\nu_q)\frac{\beta\sqrt{m+n}}{\sqrt{N}}$.
\end{proof}

\end{document}